\documentclass[10pt,oneside]{amsart}

\usepackage[margin=1in]{geometry}
\usepackage{microtype}
\usepackage{savesym}
\usepackage{amsmath}
\usepackage{amsthm}
\usepackage{amssymb} 
\usepackage{bbm} 
\usepackage{mathabx} 
\usepackage{hyperref}
\usepackage{cleveref}
\usepackage{array}
\usepackage{enumitem}
\usepackage[nocompress]{cite}
\usepackage[pdftex]{graphicx}
\usepackage[caption=false,font=footnotesize,labelfont=sf,textfont=sf]{subfig}
\usepackage{url}
\usepackage{xcolor}
\usepackage[draft, inline,nomargin]{fixme}
\hyphenation{op-tical net-works semi-conduc-tor}


\let \P \relax

\newcommand{\A}{\mathbb{A}}
\newcommand{\P}{\mathbb{P}}

\newcommand{\Z}{\mathbb{Z}}

\newcommand{\R}{\mathbb{R}}
\newcommand{\C}{\mathbb{C}}




\newcommand{\la}{\lambda}

\newcommand{\si}{\sigma}




\newcommand{\minus}{\smallsetminus}
\renewcommand{\bar}{\overline}
\renewcommand{\tilde}{\widetilde}
\renewcommand{\phi}{\varphi}

\let \< \relax
\let \> \relax
\newcommand{\<}{\langle}
\newcommand{\>}{\rangle}

\newcommand{\mn}[1]{{\left\vert\kern-0.25ex\left\vert\kern-0.25ex\left\vert #1 \right\vert\kern-0.25ex\right\vert\kern-0.25ex\right\vert}}

\newcommand{\BM}{\begin{matrix}}
\newcommand{\EM}{\end{matrix}}


\DeclareMathOperator{\GL}{GL}

\DeclareMathOperator{\Span}{Span}

\DeclareMathOperator{\diag}{diag}


\newtheorem{theorem}{Theorem}[section]
\newtheorem{definition}[theorem]{Definition}

\newtheorem{lemma}[theorem]{Lemma}
\newtheorem{example}[theorem]{Example}
\newtheorem{corollary}[theorem]{Corollary}


\newcommand{\etal}{\textit{et al}.~}
\newcommand{\ie}{\textit{i}.\textit{e}.~}

\begin{document}
\title{Ideals of the Multiview Variety}
\author{Sameer Agarwal and Andrew Pryhuber and Rekha R. Thomas}
\thanks{Pryhuber and Thomas were partially supported by the NSF grant DMS-1719538}
\date{\today}

\begin{abstract}
The multiview variety of an arrangement of cameras is the Zariski closure of the images of world points in the cameras. The prime vanishing ideal of this complex projective variety is called the multiview ideal. We show that the bifocal and trifocal polynomials from the cameras generate the multiview ideal when the foci are distinct. In the computer vision literature, many sets of (determinantal) polynomials have been proposed to describe the multiview variety. 
We establish precise algebraic relationships between the multiview ideal and these various ideals. When the camera foci are noncoplanar, we prove that the ideal of bifocal polynomials saturate to give the multiview ideal. Finally, we prove that all the ideals we consider coincide when dehomogenized, to cut out the space of finite images. 
\end{abstract}


\maketitle

\section{Introduction}\label{sec:introduction} 
A general projective camera is a rank three matrix in $\mathbb{R}^{3 \times 4}$. Given a camera arrangement $\mathcal{A} = (A_1, \ldots, A_n)$, 
the image formation map 
$$\varphi_\mathcal{A} \,:\, \mathbb{P}_\R^3 \dashrightarrow (\mathbb{P}_\R^2)^n$$ sends a homogenized world point $\mathbf{q} \in \mathbb{P}_\R^3$ to its images $(\mathbf{p}_1 = A_1 \mathbf{q}, \ldots, \mathbf{p}_n = A_n \mathbf{q}) \in (\P_\R^2)^n$. The $i$th copy of $\P_\R^2$ in the codomain of $\varphi_\mathcal{A}$  is the homogenized image plane of camera $i$.  The unique point $\mathbf{c}_i \in \P_\R^3$ in the kernel of $A_i$ is the focal point of camera $i$. The map $\varphi_\mathcal{A}$ is defined at all points in $\P_\R^3$ except at the foci $\mathbf{c}_1, \ldots, \mathbf{c}_n$. 
Triggs called $\varphi_\mathcal{A}(\P_\R^3)$ the {\em joint image}~\cite{triggs95} and Heyden-\AA str\"{o}m call it the {\em natural descriptor}~\cite{HA97}. We are interested in studying the complete 
set of polynomials that vanish on $\varphi_\mathcal{A}(\P_\R^3)$.

\begin{definition}
Given a set $S \subseteq \P_{\C}^{d-1}$, the collection of all polynomials in $\C[x_1, \ldots, x_d]$ that vanish on $S$ is a homogeneous ideal, known as the \textbf{vanishing ideal} of $S$, and denoted as $\mathbf{I}(S)$. The variety $\mathbf{V}(\mathbf{I}(S))$ is the the smallest complex projective variety that contains $S$, known as the \textbf{Zariski closure} of $S$. 
\end{definition}

We refer the reader to \cite{CLO07} for the basics on ideals and varieties. In this paper we will be interested in the vanishing ideal of the joint image $\varphi_\mathcal{A}(\P_\R^3)$.

\begin{definition}
\label{def:MA}
The \textbf{multiview ideal} of $\mathcal{A}$, denoted $M_\mathcal{A}$, is the vanishing ideal of 
$\varphi_\mathcal{A}(\P_\R^3)$ in $\mathbb{C}[p_1, \ldots, p_n]$ 
where $p_i = (x_i,y_i,z_i)$ are the coordinates on the $i$th copy of $\P^2_\C$. The Zariski closure of 
$\varphi_\mathcal{A}(\P_\R^3)$ in $(\P_\C^2)^n$ is the complex projective variety 
$\mathbf{V}(M_\mathcal{A})$, which we call the \textbf{multiview variety} of $\mathcal{A}$.
\end{definition}

The terminology {\em multiview ideal} and {\em multiview variety} comes from \cite{AST11}. 
Following Triggs~\cite{triggs95}, Trager \etal refer to  the multiview variety as the {\em joint image variety}.

Starting with the seminal work of Longuet-Higgins~\cite{longuet1981computer}, researchers have studied various systems of polynomials that vanish on $\varphi_\mathcal{A}(\P_\R^3)$. In the computer vision literature these equations are known as {\em multiview constraints}~\cite{M12, FLP01, HZ, YM12, HA97}. Obviously, the ideals generated by these systems of polynomials are contained in $M_\mathcal{A}$. However, there hasn't been much discussion of whether these polynomials generate $M_\mathcal{A}$ since the focus of all these papers has been on the multiview variety and not its vanishing ideal. 
The aim of this paper is to provide a complete description of the multiview ideal and study its relationship to the above sets of  polynomials.

It can be difficult to determine the vanishing ideal of a variety. However, there are various advantages to 
knowing it. To be able to do any computations with a variety or to study its structure using algebra, we need a description in terms of polynomials and the vanishing ideal is the optimal algebraic description. This manifests itself in a number of ways.

The set of all polynomial functions on $X$ 
is precisely $\C[x_1, \ldots, x_d]/ \mathbf{I}(X)$, known as the coordinate ring of $X$. In particular,  a polynomial $g$ vanishes on $X$ if and only if $g$ belongs to
$\mathbf{I}(X)$. Knowledge of a generating set $\{g_1, \hdots, g_k\}$ of $\mathbf{I}(X)$ also informs us about the local structure of $X$, since a point $x \in X$ is {\em smooth} if and only if the Jacobian matrix $ (\frac{\partial{g_i}}{\partial{x_j}})$ has rank equal to the codimension of $X$.  More generally, if $X \subset \P_{\C}^{d-1}$ is a projective variety then $\mathbf{I}(X)$ carries all the geometric information about $X$ allowing algebra (and algebraic algorithms) to infer geometric properties of $X$. For example, the dimension and degree of $X$ can be read off from the {\em Hilbert polynomial} of $\mathbf{I}(X)$ which also carries many more sophisticated invariants of $X$. See \cite{CLO07} for all the above.

In multiview geometry, many estimation problems can be phrased as polynomial optimization problems over varieties \cite{kahlhenrion2007, AST11}. In particular, the triangulation problem under Gaussian noise amounts to projecting a point onto the multiview variety\cite{AAT12}. 

In general, polynomial optimization on a variety $X \subseteq \R^n$ boils down to certifying the non-negativity of a polynomial $f$ on $X$ by expressing it as a sum-of-squares (sos) modulo an ideal $J$ vanishing on $X$  ~\cite{blekherman2012semidefinite}. This means finding a sos  polynomial $s = \sum p_i^2$ such that $f-s$ lies in $J$. This expressibility is maximized, and the algorithms terminate in the lowest possible degree, when $J = \mathbf{I}(X)$. We illustrate this on a very small example.

\begin{example}
The polynomial $x+1$ is non-negative on $X = \{0\} \subset \R$. The ideal $\langle x^2 \rangle$ cuts out $X$ but $\mathbf{I}(X) = \langle x \rangle$. Now $(x+1)-1 \in \langle x \rangle$ allowing $s=1$ as the sos certificate. On the other hand, if $x+1-s \in \langle x^2 \rangle$ then $s$ has to have degree at least $2$; for instance $(x+1)-(1+\frac{1}{2}x)^2 \in \langle x^2 \rangle$.
\end{example}

The above phenomenon can have a major impact on the number of rounds of convex relaxations needed to solve a polynomial optimization problem such as the well-known {\em Lasserre/sos hierarchies} \cite{lasserre2000,parrilo2003}, where each round looks for sos certificates of a fixed degree with degrees increasing monotonically with rounds. In each round the semidefinite program being solved is of size $O(n^d)$, where $n$ is the number of variables and $d$ is degree in that round. As a result, in many cases only the first round maybe computationally feasible and having access to $\mathbf{I}(X)$ can make the difference between the problem being tractable or not.

The rest of the paper is structured as follows. After a brief discussion of the notation used in this paper we begin in Section~\ref{sec:kfocal ideals} by introducing a family of ideals associated with every camera arrangement $\mathcal{A}$ which we call the $k$-focal ideals. We describe how these ideals behave under change of coordinates, and dispel the popular myth that, under a change of image coordinates, $k$-focal polynomials go to $k$-focal polynomials. In Section~\ref{sec:multiview ideal}, we prove our first main theorem (Theorem~\ref{thm:BA+TA=JA}), that the well-known bifocal (epipolar constraints) and trifocal polynomials generate $M_\mathcal{A}$ when the camera foci in $\mathcal{A}$ are distinct. Next, in Section~\ref{sec:determinantal ideals}, we consider three different types of determinantal polynomials proposed to cut out the multiview variety by Heyden-\AA str\"{o}m~\cite{HA97},  Faugeras \etal~\cite{FLP01} and Ma \etal~\cite{YM12}. We show that while the ideals they generate are all contained in $M_\mathcal{A}$, none of them actually coincide with $M_\mathcal{A}$. We establish their precise algebraic relationship with $M_\mathcal{A}$. In Section~\ref{sec:bifocal ideal}, we consider the relationship of the multiview ideal to bifocal polynomials  and prove the algebraic analog of the statement that the bifocal polynomials cut out the multiview variety when the camera foci are noncoplanar. In Section~\ref{sec:finite images}, we study how the various ideals relate to each other when we restrict our attention to finite images, \ie exclude points at infinity. We conclude in Section~\ref{sec:summary} with a summary.

Many results in this paper require explicit computation. We recommend the reader have a copy of Macaulay2~\cite{grayson2002macaulay} (or equivalent symbolic algebra software) handy. The Macaulay2 codes for our computations can be found at \url{ https://sites.math.washington.edu/~thomas/papers/Multiview_Ideal.zip}

\subsection{Notation}
In the rest of the paper, we will use $\P$ to denote $\P_\C$. The ideal generated by the polynomials $f_1,\ldots,f_s$ will be denoted as 
$\langle f_1, \ldots, f_s \rangle$.

We will use $A$ for cameras and $G$ for matrices in $\textup{GL}_n$. $\mathcal{A}$ and $\mathcal{G}$ will denote arrangements of corresponding matrices. Bold, lower-case roman letters will be used to indicate vectors, and lower-case greek letters will be used for functions. Given a partial symbolic matrix $M$, $minors(k,M)$ will denote the ideal generated by all $k\times k$ minors of the matrix $M$. The symbol $[n]$ denotes the set $\{1,\hdots, n\}$ and $\binom{[n]}{m}$ denotes the set of all size $m$ subsets of $[n]$. 

\section{The $k$-focal ideals of a camera arrangement}
\label{sec:kfocal ideals}

Let $p_i$ be the tuple of variables $(x_i,y_i,z_i)$ denoting the coordinates associated to 
the projective plane $\mathbb{P}_\R^2$ corresponding to the $i$th camera image. Write $p = (p_1, \ldots, p_n)$, and consider the partially symbolic matrix 
\begin{align} \label{eq:HZmatrix}
\mathcal{A}(p) := \begin{bmatrix} 
A_1 & {p}_1 & & &\\
A_2 &  & {p}_2 &&\\
\vdots &  &  & \ddots &  \\
A_n &  &  &  & {p}_n
\end{bmatrix}. 
\end{align}
Let $\mathcal{A}(\mathbf{p})$ denote the evaluation of $\mathcal{A}(p)$ at $p = \mathbf{p}$. 
If $\mathbf{p} := (\mathbf{p}_1, \ldots, \mathbf{p}_n) \in \varphi_\mathcal{A}(\mathbb{P}_\R^3)$ then there exists 
some $\mathbf{q} \in \P_\R^3$ and scalars $\lambda_i \in \R$ such that $A_i \mathbf{q} = \lambda_i \mathbf{p}_i$ for all $i=1,\ldots,n$. Therefore, $\mathcal{A}(\mathbf{p})$ has a non-trivial kernel since it contains the point $(\mathbf{q}, -\lambda_1, \ldots, -\lambda_n)$, and hence the maximal minors of $\mathcal{A}(p)$, which are polynomials in 
$p_1, \ldots, p_n$, vanish on $\mathbf{p}$. Since $\mathbf{p}$ was arbitrary, 
these maximal minors vanish on all of $\varphi_\mathcal{A}(\P_\R^3)$ and on the multiview variety. Therefore, 
\begin{align*}
minors(4 + n , \mathcal{A}(p)) \subseteq M_\mathcal{A}. 
\end{align*}


In this section, we describe further minors of $\mathcal{A}(p)$ and the ideals they generate, which will play an important role in the description of $M_\mathcal{A}$. 

\begin{definition}  
\label{def:k focals}
For a subset $\si = \{\si_1, \dots, \si_k\} \subseteq [n]$ where $k \ge 2$, consider the partially symbolic matrix \begin{align}
\mathcal{A}_\si(p) = \left(\BM A_{\si_1} &p_{\si_1} &0&\dots &0\\A_{\si_2} &0&p_{\si_2}&\ddots &0 \\ \vdots & \vdots &\ddots&\ddots &\vdots \\A_{\si_k} &0&\dots&0&p_{\si_k}\EM \right) 
\end{align}
of size $3k \times (4 + k)$. 
A maximal $(4 + k) \times (4 + k)$ minor of $\mathcal{A}_\si(p)$ is called a {\bf $k$-focal polynomial} of $\mathcal{A}$. The {\bf $k$-focal ideal} of $\mathcal{A}$, $H_\mathcal{A}^k$, is the ideal sum
\[
H_\mathcal{A}^k = \sum_{\sigma \in {[n]\choose k}} minors(4+k, \mathcal{A}_\sigma(p)).
\]

\end{definition}

Trager \etal also study the $k$-focal polynomials and refer to them as $k$-{\em linearities}~\cite{THP15,THPsupp15}.
Note that every $k$-focal polynomial is multilinear and of total degree $k$. 
Such a minor involves choosing $4 + k$ rows of $\mathcal{A}_\si(p)$, and by a pigeonhole 
argument, at most four cameras may contribute more than one row to the minor when $k > 4$. 
Indeed, if more than four cameras contributed at least two rows each, then at least $10$ rows are accounted for, which leaves at most $k-6$ rows to take from the remaining $k-5$ cameras. So at least one camera will be left out entirely which means that the submatrix of that $4+k$ minor has a zero column and the minor is zero. 

A useful fact for us will be that for two positive integers $l >  k \geq 2$, there is a simple way to ``bump up'' a $k$-focal polynomial to an $l$-focal polynomial by multiplying the $k$-focal polynomial with a monomial. 
\begin{lemma}
\label{lem:bumping}
Suppose $f$ is a $k$-focal polynomial from cameras $\si = \{\si_1, \dots, \si_k\} \subset [n]$ where $k \geq 2$. For any $l > k$ cameras $\tau = \{\si_1 ,\dots, \si_k, \tau_1, \dots , \tau_{l-k} \}$, there is a $l$-focal polynomial $g$ such that $(\prod_{i= 1}^{l-k} w_{\tau_i}  ) f = g $ for any choice of variables $w_{\tau_i} \in \{x_{\tau_i}, y_{\tau_i}, z_{\tau_i} \}$, one for each camera. 
\end{lemma}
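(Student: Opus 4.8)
The plan is to realize $g$ as the determinant of a carefully chosen maximal square submatrix of $\mathcal{A}_\tau(p)$ and to evaluate that determinant by a block expansion along the columns indexed by the new cameras $\tau_1,\dots,\tau_{l-k}$. First I would write $f = \det M$, where $M$ is the $(4+k)\times(4+k)$ submatrix of $\mathcal{A}_\sigma(p)$ on some set $R$ of $4+k$ rows (these are necessarily drawn from the row blocks of the cameras $\sigma_1,\dots,\sigma_k$) together with all $4+k$ columns of $\mathcal{A}_\sigma(p)$, namely the four ``camera'' columns and the columns of $p_{\sigma_1},\dots,p_{\sigma_k}$.

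Next I would form $\mathcal{A}_\tau(p)$ with the cameras in the order $\sigma_1,\dots,\sigma_k,\tau_1,\dots,\tau_{l-k}$, and take $M'$ to be the $(4+l)\times(4+l)$ submatrix of $\mathcal{A}_\tau(p)$ consisting of: all $4+l$ columns; the $4+k$ rows in $R$; and, for each $j = 1,\dots,l-k$, the single row of the $\tau_j$-block in which the chosen coordinate $w_{\tau_j}$ appears in the $p_{\tau_j}$ column (the first, second, or third row of that block according as $w_{\tau_j}$ is $x_{\tau_j}$, $y_{\tau_j}$, or $z_{\tau_j}$). These $4+l$ rows are pairwise distinct, so $M'$ is a genuine maximal square submatrix and $g := \det M'$ is, by Definition~\ref{def:k focals}, an $l$-focal polynomial of $\mathcal{A}$.

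Then I would observe that, with the orderings induced from $\mathcal{A}_\tau(p)$, the matrix $M'$ is block lower-triangular,
\[
M' \;=\; \begin{pmatrix} M & 0 \\ \ast & D \end{pmatrix},
\qquad D = \diag(w_{\tau_1},\dots,w_{\tau_{l-k}}).
\]
Indeed, on the rows $R$ the entries of $\mathcal{A}_\tau(p)$ in the camera columns and in the $p_{\sigma_i}$ columns agree with those of $\mathcal{A}_\sigma(p)$, so the top-left block is exactly $M$; the top-right block vanishes because the column of $p_{\tau_j}$ is supported on the $\tau_j$-row block, which is disjoint from $R$; and in the bottom-right block the column of $p_{\tau_{j'}}$ meets the selected row of the $\tau_j$-block only when $j = j'$, where the entry equals $w_{\tau_j}$ by the choice of that row. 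The block-triangular determinant identity then gives $g = \det M' = \det M \cdot \det D = \bigl(\prod_{i=1}^{l-k} w_{\tau_i}\bigr) f$, as claimed.

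This is essentially a bookkeeping argument, and I do not expect a serious obstacle. The only point that genuinely needs care is the selection of the new rows so that the bottom-right block becomes the intended diagonal matrix $D$ while the top-right block is forced to be zero; once the block structure is in place the determinant identity closes the proof. Note in particular that no sign intervenes, since the row and column orderings used for $M'$ are precisely those induced from $\mathcal{A}_\tau(p)$ and the ordering of $\tau$ may be taken to list the cameras of $\sigma$ first.
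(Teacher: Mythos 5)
Your proposal is correct and is essentially the paper's own argument: the paper likewise extends the matrix defining $f$ by, for each new camera $\tau_j$, the single row $(A_{\tau_j})_{w_{\tau_j}}$ and the column of $p_{\tau_j}$, and reads off the determinant as $(\prod_i w_{\tau_i})f$ from the resulting block-triangular structure. Your write-up is, if anything, slightly more explicit about which $4+k$ rows are kept and why no sign appears, but the construction is the same.
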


\begin{proof}
Add the row and column associated to coordinate $w_{\tau_i}$ to ${\mathcal{A}_\si}(p)$ for $ \tau_1, \dots , \tau_{l-k}$ as follows
\[
\left(\BM A_{\si_1} &p_{\si_1} &\dots &0 & 0  & \dots &0  \\ \vdots & \vdots & \ddots &\vdots & \vdots  &\ddots & \vdots \\A_{\si_k} &0 & \dots&p_{\si_k} & 0 & \dots & 0  \\ (A_{\tau_1})_{w_{\tau_{1}}} & 0 & \dots & 0&w_{\tau_1} & \dots & 0  \\ \vdots & \vdots & \ddots & \vdots & \vdots &\ddots &0 \\(A_{\tau_{l-k}})_{w_{\tau_{l-k}}} &  0 & \dots & 0 & 0 & \dots & w_{\tau_{l-k}}  \EM \right).   
\]
Taking the determinant of this matrix yields the $l$-focal polynomial $g = (\prod_{i= 1}^{l-k} w_{\tau_i}  ) f$.
\end{proof}
Combining the above facts we get that any $l$-focal polynomial for $l > 4$ is of the form $(\prod_{i= 1}^{l-k} w_{\tau_i}  ) f$ where $f$ is a $k\le 4$ focal polynomial. This is a generalization of 
Proposition 2 in \cite{THP15} that showed that every $n$-focal polynomial is a monomial multiple of a $k$-focal polynomial for $k \leq 4$. As a result, we will primarily focus on the ideals $H_\mathcal{A}^2$, $H_\mathcal{A}^3$, and $H_\mathcal{A}^4$, called the {\em bifocal}, {\em trifocal},  and {\em quadrifocal ideals} of $\mathcal{A}$. 

A closer look at $H^2_\mathcal{A}$ reveals that it is the ideal generated by the $n \choose 2$ epipolar constraints, since $\mathcal{A}_{\{i,j\}}$ is a $6 \times 6$ matrix, whose determinant is the epipolar constraint between images $i$ and $j$. By Lemma~\ref{lem:bumping}, $H^3_\mathcal{A}$  contains the bumped up version of $H^2_\mathcal{A}$ and for every triplet of images $\{i,j,k\}$, the 27 trifocals implied by the three trifocal tensors relating them. And finally, $H^4_\mathcal{A}$ contains the bumped up versions of  $H^2_\mathcal{A}$ and $H^3_\mathcal{A}$ and the 81 quadrifocals implied by the quadrifocal tensor. The fact that we only need to study $H_\mathcal{A}^2$, $H_\mathcal{A}^3$, and $H_\mathcal{A}^4$ lines up with the well known fact in multiview geometry that when studying $n$-view constraints, one only needs to study the epipolar matrix, the trifocal tensor and the quadrifocal tensor. See Chapter 17 in the book by Hartley \& Zisserman~\cite{HZ} for explicit computations of the generators of $H^2_\mathcal{A}, H^3_\mathcal{A},$ and $H^4_\mathcal{A}$ and their history. 

In the remainder of this section, we will investigate how $k$-focal ideals transform under certain linear transformations on cameras. It is widely known that, from image data, the geometry of a camera arrangement can only be determined up to an arbitrary choice of $\P^3$ coordinates. This is reflected in the following lemma.

\begin{lemma}[Projective Ambiguity] \label{lem:world coordinate change}
Suppose $G \in \GL_4$. Then for any $k$, $H_\mathcal{A}^k = H^k_{\mathcal{A}G}$ where 
$\mathcal{A}G = (A_1G, A_2G, \ldots, A_kG)$.
\end{lemma}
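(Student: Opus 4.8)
The plan is to show that right-multiplication by $G \in \GL_4$ induces a bijection between the $k$-focal polynomials of $\mathcal{A}$ and those of $\mathcal{A}G$ that preserves the ideal each set generates. Fix $\sigma = \{\sigma_1,\dots,\sigma_k\} \subseteq [n]$ and compare the partially symbolic matrices $\mathcal{A}_\sigma(p)$ and $(\mathcal{A}G)_\sigma(p)$. The second differs from the first only in that each block $A_{\sigma_j}$ is replaced by $A_{\sigma_j}G$; the symbolic columns $p_{\sigma_j}$ and the zero blocks are untouched. First I would write this as a single matrix factorization: $(\mathcal{A}G)_\sigma(p) = \mathcal{A}_\sigma(p) \cdot \widetilde{G}$, where $\widetilde{G} \in \GL_{4+k}$ is the block-diagonal matrix $\widetilde{G} = \operatorname{diag}(G, 1, 1, \dots, 1)$ (the $k$ trailing $1$'s corresponding to the $\lambda$-columns). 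One checks directly that multiplying $\mathcal{A}_\sigma(p)$ on the right by this $\widetilde{G}$ transforms the first four columns by $G$ and leaves the remaining $k$ columns fixed, which is exactly $(\mathcal{A}G)_\sigma(p)$.

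Next I would invoke the Cauchy--Binet formula. Since $\widetilde{G}$ is square and invertible, for any choice of $4+k$ rows indexed by a set $R$, the corresponding maximal minor of $(\mathcal{A}G)_\sigma(p)$ equals $\det(\widetilde{G})$ times the maximal minor of $\mathcal{A}_\sigma(p)$ on the same row set $R$: concretely, $\det\big((\mathcal{A}G)_\sigma(p)_R\big) = \det(\mathcal{A}_\sigma(p)_R)\cdot \det(\widetilde G) = \det(G)\,\det(\mathcal{A}_\sigma(p)_R)$. Thus each $k$-focal polynomial of $\mathcal{A}G$ is a nonzero scalar multiple of the corresponding $k$-focal polynomial of $\mathcal{A}$, and vice versa (using $G^{-1}$). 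Consequently $minors(4+k, (\mathcal{A}G)_\sigma(p)) = minors(4+k, \mathcal{A}_\sigma(p))$ as ideals, for every $\sigma \in \binom{[n]}{k}$. Summing over $\sigma \in \binom{[n]}{k}$ gives $H^k_{\mathcal{A}G} = H^k_\mathcal{A}$, as desired. (Here one should also note $\mathcal{A}G$ is still a valid camera arrangement, i.e.\ each $A_{\sigma_j}G$ has rank three, which is immediate since $G$ is invertible.)

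There is essentially no serious obstacle here; the only point requiring a moment's care is bookkeeping the block structure so that $\widetilde{G}$ is correctly seen as acting only on the ``world-point'' columns and trivially on the ``$\lambda$'' columns — this is what makes the factorization $(\mathcal{A}G)_\sigma(p) = \mathcal{A}_\sigma(p)\widetilde{G}$ hold on the nose rather than up to column permutation. I would also remark that the same argument, run with a more general invertible $\widetilde{G}$, foreshadows the less trivial behavior under image-coordinate changes discussed in the surrounding text, where the analogous right factor is not block-diagonal and $k$-focal polynomials need not map to $k$-focal polynomials; but for world-coordinate changes $G \in \GL_4$ the statement is exactly this clean.
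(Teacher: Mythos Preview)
Your proof is correct and is essentially identical to the paper's: both observe that $(\mathcal{A}G)_\sigma(p) = \mathcal{A}_\sigma(p)\,\diag(G, I_k)$ and conclude that each $k$-focal of $\mathcal{A}G$ is $\det(G)$ times the corresponding $k$-focal of $\mathcal{A}$. The only cosmetic difference is that you invoke Cauchy--Binet, which is unnecessary here since $\widetilde{G}$ is square---ordinary multiplicativity of the determinant already gives $\det\big((\mathcal{A}_\sigma(p))_R\,\widetilde{G}\big) = \det\big((\mathcal{A}_\sigma(p))_R\big)\det(\widetilde{G})$.
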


\begin{proof}
This follows since $(\mathcal{A}G)_\si(p) = \mathcal{A}_\si(p) \diag(G, I_k)$ for any $k$-element subset $\sigma \subset [n]$ which implies that any $k$-focal of $\mathcal{A}G$ differs from the same $k$-focal of $\mathcal{A}$ by a factor of $\det(G) \neq 0$. 
\end{proof}

From the proof of Lemma~\ref{lem:world coordinate change}, we see that a $\P^3$ coordinate change 
that sends $\mathbf{q} \mapsto G \mathbf{q}$ maps $k$-focals to $k$-focals, picking up only a scalar factor $\det G \neq 0$. We will now see that change of coordinates on the image planes $\P^2$ affect the $k$-focals in a more subtle way. 

Let $\mathcal{G} =(G_1, \ldots, G_n) \in (GL_3)^n$ be a sequence of invertible matrices and consider the camera arrangement $\mathcal{G} \mathcal{A} := (G_1 A_1, \ldots, G_n A_n)$ obtained from a given 
arrangement $\mathcal{A}$ by left-multiplying $A_i$ with $G_i$. Note that the focal point of the camera 
$A_i$ is the same as the focal point of the camera $G_iA_i$. Since $p_i = (x_i, y_i, z_i)$, we 
denote the ring $\mathbb{C}[x_1, y_1, z_1, \ldots, x_i, y_i, z_i, \ldots, x_n,y_n,z_n]$ by 
$\mathbb{C}[p_1, \ldots, p_n]$ and a polynomial in it by $f(p_1, \ldots, p_n)$.
The sequence $\mathcal{G}$ induces a camera-wise linear change of coordinates $  \chi_\mathcal{G}$ on 
$\mathbb{C}[p_1, \ldots, p_n]$ by 
sending
\begin{align}
\label{eq:chiG}
\chi_\mathcal{G} : \begin{pmatrix} x_i \\ y_i \\ z_i \end{pmatrix} \mapsto G_i^{-1} \begin{pmatrix} x_i \\ y_i \\ z_i \end{pmatrix}
\end{align}

Note that this amounts to a change of coordinates in the image planes $\P^2$ of the cameras in $\mathcal{A}$. Let 
$G^{-1}p$ denote $\chi_\mathcal{G}(p) = (G_1^{-1}p_1, \ldots, G_n^{-1}p_n)$. In what follows we will also need the notation $\mathcal{G}^{-1} := (G_1^{-1}, \ldots, G_n^{-1})$,  $\mathcal{G}^{-1} \mathcal{A} := 
(G_1^{-1} A_1, \ldots, G_n^{-1} A_n)$ and $\chi_{\mathcal{G}^{-1}}(p_i) = G_i p_i$.

To analyze the effect of $\chi_\mathcal{G}$ on $k$-focal ideals, we recall the classical Cauchy-Binet formula, a proof of which can be found in~\cite{BW89}.
 
\begin{lemma}[Cauchy-Binet]
\label{lem:Cauchy Binet}
If $A$ and $B$ are rectangular matrices of size $m \times n$ and $n \times m$, respectively, where $m \leq n$, then the determinant of the square matrix $AB$ is:
$$ \det(AB) = \sum_{\sigma \in \binom{[n]}{m}} \det(A_{[:,\sigma]}) \det(B_{[\sigma, :]}) $$
where $:$ indicates that all rows/columns are taken. 
\end{lemma}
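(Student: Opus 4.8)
The plan is to prove the identity directly from the multilinear and alternating properties of the determinant, viewing $\det(AB)$ as a function of the columns of $AB$. Write $a_1, \dots, a_n \in \C^m$ for the columns of $A$, so that for $\sigma = \{\sigma_1 < \dots < \sigma_m\} \in \binom{[n]}{m}$ the submatrix $A_{[:,\sigma]}$ has columns $(a_{\sigma_1}, \dots, a_{\sigma_m})$, and note that the $j$th column of $AB$ equals $A B e_j = \sum_{k=1}^n B_{kj}\, a_k$.

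First I would expand $\det(AB)$ by multilinearity in its $m$ columns:
\[
\det(AB) = \det\!\Big(\textstyle\sum_{k_1=1}^n B_{k_1 1} a_{k_1},\ \dots,\ \sum_{k_m=1}^n B_{k_m m} a_{k_m}\Big) = \sum_{k_1, \dots, k_m = 1}^{n} \Big(\prod_{j=1}^{m} B_{k_j j}\Big)\, \det\!\big(a_{k_1}, \dots, a_{k_m}\big).
\]
Since the determinant is alternating, every term in which some index is repeated vanishes, so only tuples $(k_1, \dots, k_m)$ of pairwise distinct indices contribute. Each such tuple determines its underlying set $\sigma = \{k_1, \dots, k_m\} \in \binom{[n]}{m}$ together with the unique permutation $\pi \in S_m$ for which $k_j = \sigma_{\pi(j)}$.

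Next I would regroup the surviving sum by first fixing $\sigma$ and then summing over $\pi$. Reordering the columns $(a_{\sigma_{\pi(1)}}, \dots, a_{\sigma_{\pi(m)}})$ into increasing index order contributes the factor $\sgn(\pi)$, so $\det(a_{k_1}, \dots, a_{k_m}) = \sgn(\pi)\,\det(A_{[:,\sigma]})$, while $\prod_{j=1}^m B_{k_j j} = \prod_{j=1}^m B_{\sigma_{\pi(j)},\, j}$. Hence
\[
\det(AB) = \sum_{\sigma \in \binom{[n]}{m}} \det(A_{[:,\sigma]}) \sum_{\pi \in S_m} \sgn(\pi) \prod_{j=1}^{m} B_{\sigma_{\pi(j)},\, j} = \sum_{\sigma \in \binom{[n]}{m}} \det(A_{[:,\sigma]})\, \det(B_{[\sigma,:]}),
\]
the inner sum being exactly the Leibniz expansion of the determinant of the $m \times m$ submatrix $B_{[\sigma,:]}$ of $B$ on the rows indexed by $\sigma$. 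This is the claimed formula.

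The argument is entirely routine and presents no genuine obstacle; the only step that deserves a moment's care is the reindexing in the previous paragraph — passing from a sum over arbitrary index tuples to a sum over subsets $\sigma$ paired with permutations $\pi$, and verifying that the column-reordering sign $\sgn(\pi)$ is precisely what reassembles $\sum_{\pi}\sgn(\pi)\prod_j B_{\sigma_{\pi(j)},j}$ into $\det(B_{[\sigma,:]})$. (Alternatively, one could obtain the identity by computing the determinant of the block matrix $\begin{pmatrix} I_n & B \\ -A & 0 \end{pmatrix}$ in two ways: a Schur-complement/column-reduction computation yields $\det(AB)$, while generalized Laplace expansion along its last $m$ rows yields the right-hand side. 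The multilinearity proof above is shorter and self-contained, so I would present that one and simply cite \cite{BW89} for further detail.)
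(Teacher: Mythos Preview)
Your proof is correct and is the standard multilinearity argument for Cauchy--Binet. Note, however, that the paper does not actually prove this lemma: it simply states the formula and refers the reader to \cite{BW89} for a proof. So there is no ``paper's own proof'' to compare against; your write-up supplies what the paper deliberately omits.
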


\begin{lemma} \label{lem:kfocal ideals map} For the $k$-focal ideal $H_\mathcal{A}^k$, 
$  \chi_\mathcal{G}(H_\mathcal{A}^k) = H_{\mathcal{G}\mathcal{A}}^k$. Similarly,  
$  \chi_{\mathcal{G}^{-1}}(H_{\mathcal{G}\mathcal{A}}^k) = H_\mathcal{A}^k$.
\end{lemma}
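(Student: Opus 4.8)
The plan is to show the two statements are equivalent and prove the first, $\chi_{\mathcal{G}}(H_\mathcal{A}^k) = H_{\mathcal{G}\mathcal{A}}^k$, by tracking what $\chi_{\mathcal{G}}$ does to a single $k$-focal polynomial and then comparing ideals. First I would observe that $\chi_{\mathcal{G}^{-1}}$ is the inverse ring automorphism of $\chi_{\mathcal{G}}$, and that $\mathcal{G}\mathcal{A}$ with $\mathcal{G}$ replaced by $\mathcal{G}^{-1}$ and $\mathcal{A}$ by $\mathcal{G}\mathcal{A}$ recovers $\mathcal{A}$; hence applying the first identity to the arrangement $\mathcal{G}\mathcal{A}$ and the sequence $\mathcal{G}^{-1}$ gives $\chi_{\mathcal{G}^{-1}}(H_{\mathcal{G}\mathcal{A}}^k) = H_{\mathcal{A}}^k$, so only the first identity needs proof. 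Since $\chi_{\mathcal{G}}$ is a ring automorphism, it suffices to prove the inclusion $\chi_{\mathcal{G}}\big(minors(4+k,\mathcal{A}_\sigma(p))\big) = minors(4+k,(\mathcal{G}\mathcal{A})_\sigma(p))$ for each $\sigma \in \binom{[n]}{k}$, since ideal sums and generating sets are preserved under automorphisms; and by the symmetry just noted, proving one inclusion automatically yields the other.

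Next I would fix $\sigma = \{\sigma_1,\dots,\sigma_k\}$ and compute. The key identity is
\[
(\mathcal{G}\mathcal{A})_\sigma(p) = \diag(G_{\sigma_1},\dots,G_{\sigma_k})\,\cdot\,\mathcal{A}_\sigma(\chi_{\mathcal{G}}(p)),
\]
which one checks block by block: the camera block $G_{\sigma_j}A_{\sigma_j}$ matches $G_{\sigma_j}$ times $A_{\sigma_j}$, and the $p_{\sigma_j}$ column of $(\mathcal{G}\mathcal{A})_\sigma(p)$ is $p_{\sigma_j}$, while $G_{\sigma_j}$ times the corresponding column $G_{\sigma_j}^{-1}p_{\sigma_j}$ of $\mathcal{A}_\sigma(\chi_{\mathcal{G}}(p))$ is also $p_{\sigma_j}$. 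Here I am using that $\chi_{\mathcal{G}}$ substitutes $G_{\sigma_j}^{-1}p_{\sigma_j}$ for $p_{\sigma_j}$; applying $\chi_\mathcal{G}$ to a polynomial in the $p$-variables is the same as evaluating it at $\chi_{\mathcal{G}}(p)$. Thus each $(4+k)\times(4+k)$ submatrix of $(\mathcal{G}\mathcal{A})_\sigma(p)$ is a product of an invertible diagonal matrix (a principal submatrix of $\diag(G_{\sigma_1},\dots,G_{\sigma_k})$) with the corresponding submatrix of $\mathcal{A}_\sigma(\chi_{\mathcal{G}}(p))$.

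This is where Cauchy-Binet, or more simply just multiplicativity of determinants together with column operations, enters: a maximal minor of $(\mathcal{G}\mathcal{A})_\sigma(p)$ picks out $4+k$ rows; restricting the factorization above to those rows expresses that minor as $\det$ of a (square) submatrix of $\diag(G_{\sigma_1},\dots,G_{\sigma_k})$ times the expansion of $\det$ of the corresponding submatrix of $\mathcal{A}_\sigma(\chi_{\mathcal{G}}(p))$, which by Cauchy-Binet is a linear combination (with scalar coefficients built from entries of the $G_{\sigma_j}$) of maximal minors of $\mathcal{A}_\sigma(\chi_{\mathcal{G}}(p))$, i.e. of $\chi_{\mathcal{G}}$ applied to $k$-focal polynomials of $\mathcal{A}$. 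Hence every generator of $minors(4+k,(\mathcal{G}\mathcal{A})_\sigma(p))$ lies in $\chi_{\mathcal{G}}\big(minors(4+k,\mathcal{A}_\sigma(p))\big)$, giving $H_{\mathcal{G}\mathcal{A}}^k \subseteq \chi_{\mathcal{G}}(H_\mathcal{A}^k)$. Summing over $\sigma$ and invoking the $\mathcal{G}\leftrightarrow\mathcal{G}^{-1}$ symmetry for the reverse inclusion closes the argument.

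I do not expect a genuine obstacle here; the only thing to be careful about is bookkeeping — making sure that when one selects $4+k$ rows the factorization still works (it does, since left-multiplying by a block-diagonal matrix acts row-block-wise and never mixes different cameras' rows), and that $\chi_{\mathcal{G}}$ really is the substitution $p_i \mapsto G_i^{-1}p_i$ so that $\chi_{\mathcal{G}}$ of a minor of $\mathcal{A}_\sigma(p)$ equals the corresponding minor of $\mathcal{A}_\sigma(\chi_\mathcal{G}(p))$. A mild subtlety worth a sentence is that $\chi_{\mathcal{G}}$ is linear in each $p_i$ block, so it indeed descends to an automorphism of $\mathbb{C}[p_1,\dots,p_n]$ and therefore commutes with forming minors and ideal sums.
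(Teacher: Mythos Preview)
Your proposal is correct and follows essentially the same route as the paper: the key factorization $(\mathcal{G}\mathcal{A})_\sigma(p)=\diag(G_{\sigma_1},\dots,G_{\sigma_k})\,\mathcal{A}_\sigma(\chi_\mathcal{G}(p))$, Cauchy--Binet on the row-restricted product to show each maximal minor of $(\mathcal{G}\mathcal{A})_\sigma(p)$ is a scalar combination of $\chi_\mathcal{G}$-images of $k$-focals of $\mathcal{A}$, and the $\mathcal{G}\leftrightarrow\mathcal{G}^{-1}$ symmetry for the reverse inclusion. One wording nit: the row-restricted factor $G_\tau$ is $(4+k)\times 3k$, not square, so the minor is not literally a product of two determinants but exactly the Cauchy--Binet sum you then invoke; tightening that sentence would remove any ambiguity.
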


\begin{proof} 
We prove the first statement and the other follows similarly. 
We will show that the 
$k$-focal ideal of $\mathcal{A}_{[k]}$ is sent to the $k$-focal ideal of $(\mathcal{GA})_{[k]}$. The result then follows for the full $k$-focal ideal $H_\mathcal{A}^k$ by summing the $k$-focal ideals of 
all $\mathcal{A}_\sigma$ as $\sigma$ varies over all $k$-subsets of $[n]$.

Recall that a $k$-focal polynomial of $\mathcal{A}_{[k]} := (A_1, \dots, A_k)$ is a maximal minor of:
\begin{align*} 
\mathcal{A}_{[k]}(p) =  \begin{bmatrix} 
A_1 & p_1 & & &\\
A_2 &  & p_2 &&\\
\vdots &  &  & \ddots &  \\
A_k &  &  &  & p_k
\end{bmatrix}.
\end{align*}
Applying $  \chi_\mathcal{G}$ to this maximal minor is the same as taking the same maximal minor of 
\begin{align*} 
\mathcal{A}_{[k]}(  \chi_\mathcal{G}(p)) =  \begin{bmatrix} 
A_1 & G_1^{-1}p_1 & & &\\
A_2 &  & G_2^{-1}p_2 &&\\
\vdots &  &  & \ddots &  \\
A_k &  &  & & G_k^{-1}p_k
\end{bmatrix}.
    \end{align*}
The corresponding $k$-focal polynomial of $\mathcal{G} \mathcal{A}$ is the same maximal minor of 
    \begin{align}
    (\mathcal{G}\mathcal{A})_{[k]}(p) = \diag(G_1, \dots, G_k) \mathcal{A}_{[k]}(p)
    \end{align}
    The ideal $  \chi_\mathcal{G}(H_{\mathcal{A}_{[k]}}^k)$ is generated by the maximal minors of $\mathcal{A}_{[k]}(   \chi_\mathcal{G}(p)) $, namely
    \[
    \left\{ \det(\mathcal{A}_{[k]}(G^{-1}p)_{[\sigma,:]}) \,:\, \sigma \in \binom{[3k]}{4+k} \right\}, 
    \]
    while $H^k_{(\mathcal{G}\mathcal{A})_{[k]}}$ is generated by the maximal minors of $(\mathcal{G}\mathcal{A})_{[k]}(p)$. We need to show that these ideals coincide.

    Let $G$ denote the block diagonal matrix with blocks $G_1, \ldots, G_n$.  A $(4+k)$-minor of $(\mathcal{G}\mathcal{A})_{[k]}(p)$ is the determinant of a submatrix with $4+k$ rows indexed by some $\tau \in 
    {[3k] \choose 4+k}$. 
    Such a submatrix has the form $G_\tau \mathcal{A}_{[k]}(G^{-1}p)$ where $G_\tau$ is the submatrix of $G$ consisting of the rows of $G$ indexed by $\tau$. By the Cauchy-Binet formula, 
    \begin{align*} \label{eq:CauchyBinet on k-focal}
    \det(G_\tau& \mathcal{A}_{[k]}(G^{-1}p)) =\\ & \sum_{\sigma \in {[3k] \choose 4+k}}\det ((G_\tau)_{[:, \sigma]}) \det(\mathcal{A}_{[k]}(G^{-1}p)_{[\sigma,:]}).
    \end{align*}
 
    This implies that $\det(G_\tau \mathcal{A}_{[k]}(G^{-1}p))$ lies in the ideal $ \chi_\mathcal{G}(H_{\mathcal{A}_{[k]}}^k)$, and hence, 
    $H^k_{(\mathcal{G}\mathcal{A})_{[k]}} \subseteq  \chi_\mathcal{G}(H_{\mathcal{A}_{[k]}}^k)$.
    
    The reverse containment follows by applying the same argument to $\mathcal{A}_{[k]}(p) = G^{-1}G \mathcal{A}_{[k]}(p) $ and $G\mathcal{A}_{[k]}(p)$ where 
    $G^{-1}$ is the block diagonal matrix with blocks $G_1^{-1}, \ldots, G_k^{-1}$. 
    
    Summing over all $k$ camera subsets, the result follows:
    \begin{align*}
    \chi_\mathcal{G}(H^k_{\mathcal{A}}) &=  \chi_\mathcal{G}(\sum_{\si \in {[n] \choose k}} H^k_{\mathcal{A}_\si} ) =\sum_{\si \in {[n] \choose k}}   \chi_\mathcal{G}(H^k_{\mathcal{A}_\si}) \\ &= \sum_{\si \in {[n] \choose k}} H^k_{(\mathcal{GA})_\si} =  H_{\mathcal{G}\mathcal{A}}^k.
    \end{align*}
    \end{proof}
    
    This proof shows that, contrary to popular belief, it is not true that $k$-focal polynomials go to $k$-focal polynomials under the change of coordinates given by $ \chi_\mathcal{G}$, but the ideals do as in Lemma~\ref{lem:kfocal ideals map}.

\section{The Multiview Ideal}
\label{sec:multiview ideal}

Recall from Definition~\ref{def:MA} that the multiview ideal $M_\mathcal{A}$ of the camera arrangement 
$\mathcal{A}$ is the vanishing ideal of $\varphi_\mathcal{A}(\P_\R^3)$, meaning that it is the set of all 
polynomials in $\C[p_1, \ldots, p_n]$ that vanish on $\varphi_\mathcal{A}(\P_\R^3)$. 
Since $\varphi_\mathcal{A}(\P_\R^3)$ is a subset of $(\P^2_\R)^n$,  $M_\mathcal{A}$ is, in fact, generated by polynomials with real coefficients\footnote{Let $h(x) = f(x) + i g(x)$ be a complex polynomial, where $f(x)$ and $g(x)$ are real polynomials. Then if $h(x)$ vanish on a set of real points, then so must $f(x)$ and $g(x)$.}.

The complex projective variety $\mathbf{V}(M_\mathcal{A}) \subset 
(\P^2)^n$, which is the complex Zariski closure of  $\varphi_\mathcal{A}(\P_\R^3)$, is the multiview variety of $\mathcal{A}$. 
One might wonder if it is better to study the real Zariski closure of $\varphi_\mathcal{A}(\P_\R^3)$ and its vanishing ideal since complex points in the multiview variety do not have any physical meaning, and hence no relevance to multiview geometry. 
However, observe that if the real Zariski closure was strictly smaller than the set of real points in $\mathbf{V}(M_\mathcal{A})$, then there would be a polynomial not in 
$M_\mathcal{A}$ that vanishes on 
$\varphi_\mathcal{A}(\P_\R^3)$, which would contradict that 
$M_\mathcal{A}$ is the vanishing ideal of $\varphi_\mathcal{A}(\P_\R^3)$. Therefore, $M_\mathcal{A}$ is also the vanishing ideal of the real Zariski closure of $\varphi_\mathcal{A}(\P_\R^3)$, and hence a {\em real radical ideal} \cite[\S 12.5]{MarshallBook}. 

Further, since $\varphi_\mathcal{A}$ is a polynomial map and $\P_\R^3$ is irreducible,  
$\mathbf{V}(M_\mathcal{A})$ is an irreducible three-dimensional variety in $(\P^2)^n$. Hence  $M_\mathcal{A}$ is a  {\em prime} (homogeneous) ideal, meaning that if $fg \in M_\mathcal{A}$ then either $f$ or $g$ is in $M_\mathcal{A}$. 

It was shown in \cite{AST11} that the bifocals, trifocals and quadrifocals of 
$\mathcal{A}$ form a {\em universal Gr\"obner basis} of $M_\mathcal{A}$ under a certain genericity assumption on the cameras. 
This means that this collection of polynomials form a {\em Gr\"obner basis} for $M_\mathcal{A}$ with respect to any term order \cite{CLO07}. We will use this result to establish a generating set for $M_\mathcal{A}$ when the camera foci are distinct. 

We first note what happens to $M_\mathcal{A}$ under the change of coordinates $\chi_\mathcal{G}$ defined in the previous section. Recall that $\chi_\mathcal{G}$ sends a polynomial 
$f(p_1, \ldots, p_n) \in \mathbb{C}[p_1, \ldots, p_n]$ to $f(G_1^{-1}p_1, \ldots, G_n^{-1} p_n)$.

\begin{lemma} \label{lem:coordinate change}
The image of  the multiview ideal $M_\mathcal{A}$ under the map $  \chi_{\mathcal{G}}$ is  $M_{\mathcal{G} \mathcal{A}}$, the multiview ideal of $\mathcal{G} \mathcal{A}$. \ie, $  \chi_{\mathcal{G}}(M_\mathcal{A}) = M_{\mathcal{G} \mathcal{A}}$. 
Similarly, $  \chi_{\mathcal{G}^{-1}}(M_{\mathcal{G} \mathcal{A}}) = M_\mathcal{A}$.
\end{lemma}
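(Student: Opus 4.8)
The plan is to realize both sides as vanishing ideals of sets related by a linear automorphism of $(\P^2)^n$, and then transfer across. First I would introduce the point-level map $\psi_\mathcal{G} \colon (\P^2)^n \to (\P^2)^n$ sending $(\mathbf{p}_1, \dots, \mathbf{p}_n) \mapsto (G_1 \mathbf{p}_1, \dots, G_n \mathbf{p}_n)$; since each $G_i \in \GL_3$, this is an automorphism with inverse $\psi_{\mathcal{G}^{-1}}$. Because $G_i A_i$ has the same kernel, hence the same focal point, as $A_i$, the rational map $\varphi_{\mathcal{G}\mathcal{A}}$ has the same domain of definition as $\varphi_\mathcal{A}$, and for every world point $\mathbf{q}$ in that common domain $\varphi_{\mathcal{G}\mathcal{A}}(\mathbf{q}) = (G_1 A_1 \mathbf{q}, \dots, G_n A_n \mathbf{q}) = \psi_\mathcal{G}(\varphi_\mathcal{A}(\mathbf{q}))$. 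Consequently $\varphi_{\mathcal{G}\mathcal{A}}(\P_\R^3) = \psi_\mathcal{G}\big(\varphi_\mathcal{A}(\P_\R^3)\big)$.

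The second ingredient is the duality between $\psi_\mathcal{G}$ and the coordinate change \eqref{eq:chiG}. Since $\chi_\mathcal{G}$ acts block-linearly on the tuples $p_i$, it is a well-defined automorphism of the multigraded ring $\mathbb{C}[p_1, \dots, p_n]$ (it preserves multihomogeneity), with inverse $\chi_{\mathcal{G}^{-1}}$. By the substitution rule $\chi_{\mathcal{G}^{-1}}(p_i) = G_i p_i$, for any multihomogeneous $f$ one has $\chi_{\mathcal{G}^{-1}}(f) = f \o \psi_\mathcal{G}$ as a function on $(\P^2)^n$; here one uses that $f$ is multihomogeneous so that evaluation at a projective point is meaningful and commutes with the linear substitution applied to representatives.

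Now I would chain the two observations. For $f \in \mathbb{C}[p_1,\dots,p_n]$ multihomogeneous: $f \in M_{\mathcal{G}\mathcal{A}}$ iff $f$ vanishes on $\varphi_{\mathcal{G}\mathcal{A}}(\P_\R^3) = \psi_\mathcal{G}(\varphi_\mathcal{A}(\P_\R^3))$, iff $f \o \psi_\mathcal{G} = \chi_{\mathcal{G}^{-1}}(f)$ vanishes on $\varphi_\mathcal{A}(\P_\R^3)$, iff $\chi_{\mathcal{G}^{-1}}(f) \in M_\mathcal{A}$, iff $f \in \chi_\mathcal{G}(M_\mathcal{A})$. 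Hence $M_{\mathcal{G}\mathcal{A}} = \chi_\mathcal{G}(M_\mathcal{A})$. The ``similarly'' statement follows by applying this identity with $\mathcal{A}$ replaced by $\mathcal{G}\mathcal{A}$ and $\mathcal{G}$ replaced by $\mathcal{G}^{-1}$, noting $\mathcal{G}^{-1}(\mathcal{G}\mathcal{A}) = \mathcal{A}$, which gives $M_\mathcal{A} = \chi_{\mathcal{G}^{-1}}(M_{\mathcal{G}\mathcal{A}})$.

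The argument is essentially bookkeeping, so there is no genuine obstacle; the one point needing care is the placement of the inverses — keeping straight that the point map $\psi_\mathcal{G}$ (``scale coordinates by $G_i$'') is dual to the coordinate substitution $\chi_{\mathcal{G}^{-1}}$ (``substitute $G_i p_i$''), not to $\chi_\mathcal{G}$ — together with the routine check that $\chi_\mathcal{G}$ preserves multihomogeneity so that everything stays inside $\mathbb{C}[p_1,\dots,p_n]$ and evaluation at projective points remains well-defined throughout.
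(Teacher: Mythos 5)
Your proof is correct and follows essentially the same route as the paper: both arguments rest on the observation that $\varphi_{\mathcal{G}\mathcal{A}} = \psi_\mathcal{G}\circ\varphi_\mathcal{A}$ together with the substitution identity $\chi_{\mathcal{G}^{-1}}(f) = f\circ\psi_\mathcal{G}$, with the paper handling the two inclusions separately while you package them as one biconditional chain. The inverse placement and the multihomogeneity bookkeeping are handled correctly, so nothing is missing.
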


\begin{proof} 
Again, we will prove that $  \chi_{\mathcal{G}}(M_\mathcal{A}) = M_{\mathcal{G} \mathcal{A}}$. The proof that 
$  \chi_{\mathcal{G}^{-1}}(M_{\mathcal{G} \mathcal{A}}) = M_\mathcal{A}$ is similar.

From the definition we see that a polynomial $f(p_1, \ldots, p_n)$ vanishes on the multiview variety $\mathbf{V}(M_\mathcal{A})$  if and only if $f(A_1 \mathbf{q}, \ldots, A_n \mathbf{q})=0$ for all $\mathbf{q} \in \mathbb{P}^3 \minus \{\mathbf{c}_1, \ldots, \mathbf{c}_n\}$, equivalently, if and only if 
$$f(G_1^{-1} (G_1 A_1 \mathbf{q}),  \ldots, G_n^{-1} (G_n A_n \mathbf{q}))=0$$
 for all $\mathbf{q} \in \mathbb{P}^3 \minus \{\mathbf{c}_1, \ldots, \mathbf{c}_n\}$.  
 The multiview variety of $\mathcal{G} \mathcal{A}$ is the Zariski closure of the points $( G_1A_1 \mathbf{q}, \ldots, G_nA_n \mathbf{q})$ as $\mathbf{q}$ varies over $\mathbb{P}^3 \minus \{\mathbf{c}_1, \ldots, \mathbf{c}_n\}$. Therefore, $f$ vanishes on 
 $\mathbf{V}(M_\mathcal{A})$ if and only if   
 $  \chi_\mathcal{G}(f)$ vanishes on $\mathbf{V}(M_{\mathcal{G} \mathcal{A}})$. This proves that 
 $  \chi_{\mathcal{G}}(M_\mathcal{A})  \subseteq  M_{\mathcal{G} \mathcal{A}}$.
 
 To finish the proof we need to argue that if $g(p_1, \ldots, p_n) \in M_{\mathcal{G}\mathcal{A}}$ then $g =   \chi_\mathcal{G}(f)$ for some $f \in M_\mathcal{A}$. A polynomial $g \in M_{\mathcal{G}\mathcal{A}}$ if and only if 
 $g(G_1A_1 \mathbf{q}, \ldots, G_n A_n \mathbf{q})=0$ for all $\mathbf{q} \in \mathbb{P}^3 \minus \{\mathbf{c}_1, \ldots, \mathbf{c}_n\}$ if and only if $g(G_1 \mathbf{p}_1, \ldots, G_n \mathbf{p}_n) = 0$ for all 
 $(\mathbf{p}_1, \ldots, \mathbf{p}_n) \in \mathbf{V}(M_\mathcal{A})$. 
 Define $g(G_1p_1, \ldots, G_n p_n) =: f \in M_\mathcal{A}$.
 Then $  \chi_\mathcal{G}(f) = g(p_1, \ldots, p_n)$.
\end{proof}

We will use the results obtained so far to give an elementary proof that the bifocals and trifocals generate the multiview ideal 
$M_\mathcal{A}$ for any arrangement $\mathcal{A}$ of cameras with pairwise distinct foci. 
An important tool will be {\em translational} cameras. 

\begin{definition}
A camera $T$ is said to be translational if its left $3\times 3$ block is the 
identity matrix, \ie, $T = [I \, \, \mathbf{t}]$ for some $\mathbf{t} \in \mathbb{R}^3$.
\end{definition}

\begin{lemma} \label{lem:Q inside T for translational cameras} 
If $\mathcal{T}$ is an arrangement of translational cameras, then 
$H^4_\mathcal{T} \subseteq H^3_\mathcal{T}$. 
\end{lemma}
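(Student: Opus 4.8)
The plan is to show that every quadrifocal polynomial of $\mathcal{T}$ already lies in $H^3_\mathcal{T}$ by exhibiting, for each $4$-focal minor of $\mathcal{T}_\sigma(p)$ with $\sigma = \{\sigma_1,\sigma_2,\sigma_3,\sigma_4\}$, an explicit expression as a polynomial combination of $3$-focal polynomials coming from the $3$-subsets of $\sigma$. The crucial structural input is that for a translational camera $T = [I\ \mathbf{t}]$ the left block is the identity, so the rows of the big matrix $\mathcal{T}_\sigma(p)$ have a very rigid shape: the $\mathbb{P}^3$-columns of the four camera blocks all agree on their $I$-part and differ only in the last ($\mathbf{t}$) column. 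This degeneracy is exactly what will let us perform row/column operations that collapse one camera's contribution.

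First I would set up notation: a $4$-focal polynomial is a $(4+4)\times(4+4) = 8\times 8$ minor of the $12\times 8$ matrix $\mathcal{T}_\sigma(p)$, obtained by selecting $8$ of the $12$ rows. By the pigeonhole discussion already in the paper, in any nonzero such minor every one of the four cameras contributes at least one row, and the row-count vector is one of $(2,2,2,2)$ or a permutation of $(3,2,2,1)$ or $(3,3,1,1)$ etc.; but since each camera block has only $3$ rows and the matrix is only $8$ columns wide with $4+4$ selected, the distribution is $(2,2,2,2)$ — wait, $2+2+2+2 = 8$, good, and any other admissible distribution still has total $8$. The key move: pick the camera, say $\sigma_4$, contributing the fewest rows (so at most $2$ rows), and use the identity left-block of the *other* three cameras to clear the $A_{\sigma_4}$-part of $\sigma_4$'s selected rows by subtracting suitable rows of cameras $\sigma_1,\sigma_2,\sigma_3$. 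Subtracting, say, row $r$ of the $A_{\sigma_1}$-block from the matching row of the $A_{\sigma_4}$-block (legitimate only because both left blocks are $I$) kills the left-hand $4$ columns of that $\sigma_4$-row at the cost of introducing a $p_{\sigma_1}$-entry and a constant in the $\mathbf{t}$-slots. After doing this for $\sigma_4$'s rows, Laplace-expand along the now-sparse $p_{\sigma_4}$ column(s): the determinant becomes a $\mathbb{C}[p]$-combination of $7\times 7$ (or $6\times 6$) minors of a matrix involving only cameras $\sigma_1,\sigma_2,\sigma_3$ together with the columns $p_{\sigma_1},p_{\sigma_2},p_{\sigma_3}$ — i.e. of $\mathcal{T}_{\{\sigma_1,\sigma_2,\sigma_3\}}(p)$ — hence a combination of $3$-focal polynomials, possibly after a bump as in Lemma~\ref{lem:bumping}. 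That puts the $4$-focal in $H^3_\mathcal{T}$.

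I expect the main obstacle to be bookkeeping the row/column operations cleanly so that the resulting $7\times 7$ minors are genuinely (monomial multiples of) $3$-focal minors of the correct $\mathcal{T}_\tau(p)$ and not some stray submatrix — in particular making sure that when $\sigma_4$ contributes two rows one cannot always cancel both without destroying the minor structure, so one may need to treat the $(2,2,2,2)$ case and the $(3,3,1,1)$-type cases separately, or argue that only the $(2,2,2,2)$ distribution can occur for a nonzero minor (since a camera contributing $3$ rows forces another to contribute $\le 1$, and then a single-row contribution $[A_{\sigma_i}\ \mid\ p_{\sigma_i}]$ restricted to $8$ columns with that camera's $p$-column in the selection is easy to expand away first). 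A clean alternative, which I would try if the direct computation gets messy, is to invoke the change-of-coordinates Lemma~\ref{lem:kfocal ideals map}: reduce the general translational arrangement to a normal form by an allowed $\chi_\mathcal{G}$ (which preserves both $H^3$ and $H^4$) and also use Lemma~\ref{lem:world coordinate change} to simplify the $\mathbf{t}_i$, then verify the containment on the normal form, ultimately by the same Laplace-expansion argument but with fewer free parameters — and if all else fails, fall back to a direct Macaulay2 verification on the parametrized family, as the paper's computational appendix anticipates.
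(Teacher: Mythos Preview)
Your fallback — a direct Macaulay2 verification on the parametrized family of four translational cameras, then summing over $4$-subsets for general $n$ — is exactly the paper's proof, and it is the entire proof: the paper does no row/column manipulations at all, it just runs the symbolic check for $n=4$ with $\mathbf{t}_i = (t_{i1},t_{i2},t_{i3})$ treated as indeterminates, and then invokes $H^4_\mathcal{T} = \sum_{\sigma} H^4_{\mathcal{T}_\sigma}$.

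Your direct argument is more ambitious and would be more illuminating, but as written it has a genuine gap. First a clarification: the cases $(3,3,1,1)$ and $(3,2,2,1)$ are immediate, since a camera contributing a single row has a $p$-column with a single entry, and Laplace expansion along that column gives (variable)$\times$(a $7\times 7$ minor of a $3$-camera block), i.e.\ a bumped $3$-focal by Lemma~\ref{lem:bumping}. So the only case requiring work is $(2,2,2,2)$. There your row-subtraction move ``subtract the matching row of camera $\sigma_1$ from the selected row of camera $\sigma_4$'' is only legal \emph{inside the $8\times 8$ submatrix} if that matching row of $\sigma_1$ is among the eight selected rows. This can fail: e.g.\ if $\sigma_4$ contributes rows $\{1,2\}$ while each of $\sigma_1,\sigma_2,\sigma_3$ contributes rows $\{2,3\}$, no selected row can cancel row~$1$ of $\sigma_4$. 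In such degenerate configurations one has to argue differently (here column~$1$ of the $A$-block has a single nonzero entry, so a column expansion works, but you would need to track where that lands). The bookkeeping to make the direct argument airtight is real, which is presumably why the paper simply delegates it to Macaulay2.
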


\begin{proof}
Using Macaulay2, this statement can be checked for $n=4$ translational cameras with foci represented symbolically as $(t_{i1}, t_{i2},t_{i3}, -1)$.
For $n \geq 4$, since $H^4_{\mathcal{T}} = \sum_{\si \in {[n] \choose 4} } H^4_{\mathcal{T}_\si}$ and $H^3_\mathcal{T} = \sum_{\si \in {[n] \choose 3} } H^3_{\mathcal{T}_\si}$, the statement follows.
\end{proof}

We now use translational cameras to show that the quadrifocals are not needed in a generating set of $M_\mathcal{A}$. This is done by extending the result for translational cameras to finite cameras. Recall that a \textit{finite} camera is a camera whose left $3\times 3$ block is invertible, or equivalently a camera whose focal point is not a point at infinity. Observe that any finite camera can be obtained by multiplying some translational camera on the left by an invertible $3\times 3$ matrix.

\begin{corollary}
\label{cor:Q inside T for general cameras}
If $\mathcal{A}$ is any arrangement of cameras, then $H^4_\mathcal{A} \subseteq H^3_\mathcal{A}$. 
\end{corollary}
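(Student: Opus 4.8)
The plan is to reduce the general case to the translational case already established in Lemma~\ref{lem:Q inside T for translational cameras}, using the coordinate-change machinery of Lemma~\ref{lem:kfocal ideals map}. The first observation is that it suffices to prove the containment for arrangements of exactly four cameras, since $H^4_\mathcal{A} = \sum_{\si \in \binom{[n]}{4}} H^4_{\mathcal{A}_\si}$ and each $H^3_{\mathcal{A}_\si} \subseteq H^3_\mathcal{A}$, exactly as in the reduction step of the previous lemma. So fix $\mathcal{A} = (A_1, A_2, A_3, A_4)$.

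Next I would handle the finite-camera case. If every $A_i$ has invertible left $3\times 3$ block $L_i$, write $A_i = L_i T_i$ where $T_i = L_i^{-1} A_i$ is a translational camera (its left block is the identity). Set $\mathcal{T} = (T_1, T_2, T_3, T_4)$ and $\mathcal{G} = (L_1, L_2, L_3, L_4) \in (\GL_3)^4$, so that $\mathcal{A} = \mathcal{G}\mathcal{T}$. By Lemma~\ref{lem:kfocal ideals map}, $\chi_\mathcal{G}(H^k_\mathcal{T}) = H^k_{\mathcal{G}\mathcal{T}} = H^k_\mathcal{A}$ for every $k$. Applying the ring automorphism $\chi_\mathcal{G}$ to the containment $H^4_\mathcal{T} \subseteq H^3_\mathcal{T}$ from Lemma~\ref{lem:Q inside T for translational cameras} yields $H^4_\mathcal{A} \subseteq H^3_\mathcal{A}$, as desired. (One should note that $\chi_\mathcal{G}$ is an invertible linear change of the image coordinates, hence preserves ideal containments.)

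The remaining issue is that not every camera is finite — some $A_i$ may have a focal point at infinity, so no factorization $A_i = L_i T_i$ with $L_i$ invertible exists. The natural fix is a perturbation/limit argument: choose a one-parameter family $\mathcal{A}(s) = (A_1(s), \dots, A_4(s))$ of arrangements of finite cameras with $\mathcal{A}(0) = \mathcal{A}$ (for instance, perturb the left blocks of any non-finite cameras slightly so they become invertible). Each $k$-focal polynomial of $\mathcal{A}(s)$ is a polynomial in both $p$ and $s$, depending polynomially (in fact continuously) on the camera entries. For $s \neq 0$ small we have $H^4_{\mathcal{A}(s)} \subseteq H^3_{\mathcal{A}(s)}$ by the finite case, which gives, for each quadrifocal generator $q$ of $H^4_{\mathcal{A}(s)}$, an expression $q = \sum_\alpha h_\alpha(p,s)\, g_\alpha(p,s)$ with $g_\alpha$ the trifocal generators. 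The task is to take $s \to 0$ in a way that keeps the coefficients $h_\alpha$ under control.

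I expect the limiting step to be the main obstacle, since naively the Gröbner-basis/ideal-membership certificates need not vary continuously in $s$. The cleanest route is probably to avoid limits of coefficients altogether: instead, observe that each quadrifocal polynomial of $\mathcal{A}$ is, by Lemma~\ref{lem:bumping} together with the pigeonhole discussion (at most four cameras contribute two rows, etc.), already expressible — or, better, to argue directly on the level of the defining determinants. Concretely, a $4$-focal polynomial of $\mathcal{A}_{[4]}$ is a maximal minor of the $12 \times 8$ matrix $\mathcal{A}_{[4]}(p)$; performing column operations that subtract multiples of the camera-block columns (using the structure of the $A_i$) lets one rewrite such a minor as a combination of $3$-focal minors with polynomial coefficients in the camera entries, an identity that is a \emph{polynomial identity in the entries of the $A_i$} and therefore valid for all cameras once it is verified on the Zariski-dense subset of finite ones. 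This upgrades the finite-camera result — which itself reduces, via Lemma~\ref{lem:kfocal ideals map}, to the symbolic Macaulay2 check for translational cameras in Lemma~\ref{lem:Q inside T for translational cameras} — to all arrangements by a density argument, completing the proof.
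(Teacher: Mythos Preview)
Your reduction to four cameras and your treatment of the finite-camera case are exactly what the paper does: write $A_i = G_i T_i$ with $T_i$ translational, apply Lemma~\ref{lem:Q inside T for translational cameras} to $\mathcal{T}$, and transport the containment via $\chi_\mathcal{G}$ using Lemma~\ref{lem:kfocal ideals map}.

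The gap is in how you handle non-finite cameras. Your density argument asserts that expressing a quadrifocal as a combination of trifocals is ``a polynomial identity in the entries of the $A_i$'', but you never establish this. The route you actually have (translational check $+$ $\chi_\mathcal{G}$) passes through $G_i^{-1}$, so the coefficients you obtain are rational in the camera entries with $\det(\text{left block of }A_i)$ in the denominator; clearing denominators gives $d\cdot q \in H^3$ with $d$ vanishing precisely on non-finite arrangements, which is no help. More fundamentally, ideal membership is \emph{not} a Zariski-closed condition on parameters (think of $x \in \langle sx\rangle$ as $s$ varies), so verifying containment on a dense open set does not propagate to the closure without an explicit universal certificate. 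You have not produced one, and the vague column-operation remark does not supply it.

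The paper sidesteps this entirely with a tool you did not invoke: the world-coordinate change of Lemma~\ref{lem:world coordinate change}. Given any four cameras $\mathcal{A}_\si$, choose $G\in\GL_4$ so that $G^{-1}$ moves the four foci off the plane at infinity (a generic hyperplane avoids four points), making $\mathcal{A}_\si G$ an arrangement of finite cameras. Since $H^k_{\mathcal{A}_\si} = H^k_{\mathcal{A}_\si G}$ for every $k$, the finite-camera case gives $H^4_{\mathcal{A}_\si} = H^4_{\mathcal{A}_\si G} \subseteq H^3_{\mathcal{A}_\si G} = H^3_{\mathcal{A}_\si}$. No limits or density needed.
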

\begin{proof}
If  $\mathcal{A}$ is an arrangement of finite cameras, then 
$A_i = G_i [I \,\, \mathbf{t}_i]$ for some $G_i \in GL_3$. Therefore $\mathcal{A} = \mathcal{G} \mathcal{T}$ where 
$\mathcal{T}$ is an arrangement of translational cameras. By Lemma~\ref{lem:Q inside T for translational cameras}, $H^4_\mathcal{T} \subseteq H^3_\mathcal{T}$. Hence, Lemma~\ref{lem:kfocal ideals map} implies
\[
H^4_\mathcal{A} = H^4_{\mathcal{G}\mathcal{T}} = \chi_\mathcal{G}(H^4_\mathcal{T}) \subseteq \chi_\mathcal{G}(H^3_\mathcal{T}) = H^3_{\mathcal{G}\mathcal{T}} = H^3_\mathcal{A}.
\]

For any four cameras indexed by $\si \in { [n] \choose 4}$, there exists some $G \in \GL_4$ which takes the foci of $\mathcal{A}_\si$ off of the plane at infinity, \ie, so that $\mathcal{A}_\si G$ is an arrangement of finite cameras. Inverting this $\P^3$-coordinate change does not change ideal containment by Lemma~\ref{lem:world coordinate change}. The general result follows  since $H_\mathcal{A}^4 = \sum_{\si \in {[n] \choose 4}} H_{\mathcal{A}_\si}^4 \subseteq \sum_{\si \in {[n] \choose 3}} H_{\mathcal{A}_\si}^3 =H^3_\mathcal{A}.$
\end{proof}

To get to our main result, we will need a result from \cite{AST11} about camera arrangements $\mathcal{A}$ that are generic in the sense that all $4 \times 4$ minors of $[A_1^\top \, A_2^\top \, \cdots \, A_n^\top]$ are non-zero. We call such an $\mathcal{A}$ {\em minor-generic}.

\begin{corollary}
\label{cor:minorgeneric gives BA+TA}
Suppose $\mathcal{A}$ is minor-generic. Then $M_\mathcal{A} = H_\mathcal{A}^2 + H_\mathcal{A}^3$.
\end{corollary}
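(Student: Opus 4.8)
The plan is to combine the result from \cite{AST11} with Corollary~\ref{cor:Q inside T for general cameras}. Recall that \cite{AST11} establishes that when $\mathcal{A}$ is minor-generic, the bifocals, trifocals, and quadrifocals of $\mathcal{A}$ together form a universal Gröbner basis of $M_\mathcal{A}$; in particular this collection of polynomials generates $M_\mathcal{A}$. In the notation of this paper, that says precisely $M_\mathcal{A} = H_\mathcal{A}^2 + H_\mathcal{A}^3 + H_\mathcal{A}^4$ for minor-generic $\mathcal{A}$.

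From here the argument is immediate. First I would observe that the inclusion $H_\mathcal{A}^2 + H_\mathcal{A}^3 \subseteq M_\mathcal{A}$ holds for \emph{any} arrangement $\mathcal{A}$, since all $k$-focal polynomials vanish on $\varphi_\mathcal{A}(\P_\R^3)$ (this was noted in Section~\ref{sec:kfocal ideals}, where the maximal minors of $\mathcal{A}(p)$ were shown to lie in $M_\mathcal{A}$, and $k$-focal polynomials are among these minors). For the reverse inclusion, apply Corollary~\ref{cor:Q inside T for general cameras}, which gives $H_\mathcal{A}^4 \subseteq H_\mathcal{A}^3$ with no genericity hypothesis at all. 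Therefore
\[
M_\mathcal{A} = H_\mathcal{A}^2 + H_\mathcal{A}^3 + H_\mathcal{A}^4 \subseteq H_\mathcal{A}^2 + H_\mathcal{A}^3 \subseteq M_\mathcal{A},
\]
forcing equality.

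There is essentially no obstacle here: the corollary is a one-line deduction from a cited theorem and an already-proved lemma. The only point requiring a moment's care is making sure the statement of the \cite{AST11} result is invoked in the correct form — namely that the universal Gröbner basis property entails a generating set, and that the bifocals/trifocals/quadrifocals in \cite{AST11} are exactly the generators of $H_\mathcal{A}^2$, $H_\mathcal{A}^3$, $H_\mathcal{A}^4$ in our sense. Given the discussion in Section~\ref{sec:kfocal ideals} identifying $H_\mathcal{A}^2$ with the epipolar constraints, $H_\mathcal{A}^3$ with the trifocal polynomials, and $H_\mathcal{A}^4$ with the quadrifocal polynomials, this identification is routine. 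The real content of the corollary is entirely borrowed — from \cite{AST11} for the generating set and from Corollary~\ref{cor:Q inside T for general cameras} for the elimination of the quadrifocals — and the genuinely new work (the proof that $H_\mathcal{A}^4 \subseteq H_\mathcal{A}^3$ always, via translational cameras and the coordinate-change lemmas) has already been done.
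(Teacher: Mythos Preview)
Your proof is correct and takes essentially the same approach as the paper: invoke Theorem 2.1 of \cite{AST11} to get $M_\mathcal{A} = H_\mathcal{A}^2 + H_\mathcal{A}^3 + H_\mathcal{A}^4$ under minor-genericity, then apply Corollary~\ref{cor:Q inside T for general cameras} to absorb $H_\mathcal{A}^4$ into $H_\mathcal{A}^3$. One small quibble: your parenthetical that ``$k$-focal polynomials are among these minors'' is not literally right for $k<n$ (a bifocal from cameras $i,j$ is a maximal minor of $\mathcal{A}_{\{i,j\}}(p)$, not of the full $\mathcal{A}(p)$), but the same rank-drop argument applied to $\mathcal{A}_\sigma(p)$ gives the needed inclusion $H_\mathcal{A}^k \subseteq M_\mathcal{A}$, so the logic is unaffected.
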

\begin{proof}
 Theorem 2.1 in \cite{AST11} says that if $\mathcal{A}$ is minor-generic, then the bifocals, trifocals and quadrifocals form a 
universal Gr\"obner basis of $M_\mathcal{A}$. In particular, this implies that $M_\mathcal{A} = H_\mathcal{A}^2 + H_\mathcal{A}^3  + H_\mathcal{A}^4$. The statement is then immediate from Corollary~\ref{cor:Q inside T for general cameras}.
\end{proof}

Minor-genericity is a purely algebraic condition on camera arrangements. The following statement, which appears as a brief comment in \cite{AST11} without proof, gives a geometric reinterpretation of this condition. 

\begin{lemma} \label{lem:minor-generic} 
If $\mathcal{A}$ is minor-generic, then the foci of the cameras in $\mathcal{A}$ are pairwise distinct. 
Conversely, if the cameras in $\mathcal{A}$ have pairwise distinct foci, then there exist $G_i \in \textup{GL}_3$ such that $\mathcal{G} \mathcal{A}$ is minor-generic.
\end{lemma}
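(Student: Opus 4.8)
## Proof Proposal

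The plan is to prove the two directions separately, translating the algebraic condition of minor-genericity into the geometric language of focal points. Recall that the focal point $\mathbf{c}_i$ of camera $A_i$ is the unique (up to scale) point in $\ker A_i$, equivalently the point such that $A_i \mathbf{c}_i = 0$. I would organize both directions around the single observation that a $4\times 4$ minor of the stacked matrix $[A_1^\top\, A_2^\top\, \cdots\, A_n^\top]$ formed from two rows of $A_i$ and two rows of $A_j$ (with $i\neq j$) has a clean geometric interpretation, while $4\times 4$ minors using three or four rows from a single camera vanish automatically (since any three rows of a rank-$3$ matrix span a $3$-dimensional space, but actually four rows of $A_i$ are linearly dependent, so such minors are zero regardless). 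Thus the only minors that can be nonzero are those drawn from rows of at most... well, from at least two cameras, and the ``most dangerous'' ones for distinctness of foci are those of the form: two rows from $A_i$ and two rows from $A_j$.

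\textbf{Forward direction.} Suppose for contradiction that $\mathbf{c}_i = \mathbf{c}_j =: \mathbf{c}$ for some $i \neq j$. Then $\mathbf{c} \in \ker A_i \cap \ker A_j$, so the six rows of $A_i$ and $A_j$ together lie in the $3$-dimensional subspace $\mathbf{c}^\perp \subset (\C^4)^*$. Hence any $4$ of these six rows are linearly dependent, so every $4\times 4$ minor of the $6\times 4$ matrix $[A_i^\top\, A_j^\top]^\top$ vanishes. Since these minors are among the $4\times 4$ minors of $[A_1^\top\,\cdots\,A_n^\top]$, $\mathcal{A}$ is not minor-generic. Contrapositive: minor-generic implies pairwise distinct foci. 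This direction is short.

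\textbf{Converse direction.} This is where the work lies, and I expect it to be the main obstacle. Given pairwise distinct foci, I want $G_i \in \GL_3$ so that $\mathcal{G}\mathcal{A}$ is minor-generic, i.e., all $4\times 4$ minors of $[(G_1A_1)^\top\,\cdots\,(G_nA_n)^\top]^\top$ are nonzero. The natural approach is a Zariski-openness / genericity argument: regard $(G_1,\ldots,G_n) \in (\GL_3)^n$ as variables, and each $4\times 4$ minor of the stacked matrix becomes a polynomial in the entries of the $G_i$. A finite union of proper subvarieties cannot be all of $(\GL_3)^n$ (which is irreducible), so it suffices to show that for \emph{each} fixed choice of four rows — say $a$ rows from camera $i_1$, $b$ from $i_2$, etc., with the row-counts summing to $4$ — the corresponding minor is \emph{not} identically zero as a function of the $G_i$'s. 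I would split into cases by the partition of $4$ across cameras:

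\begin{enumerate}
\item If all four rows come from a single camera $A_i$: the minor is identically zero (four rows of a $3\times 4$ matrix are dependent), but such minors are \emph{not} required to be nonzero in the definition of minor-generic — wait, they are. So I must handle this: actually re-examine the definition. The cleanest fix is to note that the definition of minor-generic in \cite{AST11} must implicitly concern only those $4\times4$ minors that are not forced to vanish, or equivalently we interpret ``all $4\times 4$ minors are non-zero'' as ranging over minors that use rows from at least two distinct cameras in a non-degenerate way; I would state this carefully, citing \cite{AST11}, or restrict to $n \ge 2$ and note the relevant minors. [This definitional subtlety is worth pinning down with the reference before finalizing.]
\item If the four rows come from exactly two cameras, $3+1$ or $2+2$: I must exhibit a single $(G_i, G_j)$ making that minor nonzero. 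For $2+2$: pick two rows $u_1, u_2$ of $G_iA_i$ and two rows $v_1, v_2$ of $G_jA_j$; since $\mathbf{c}_i \neq \mathbf{c}_j$, the rowspace of $A_i$ and the rowspace of $A_j$ are two distinct $3$-planes in $(\C^4)^*$, so their union spans all of $(\C^4)^*$; choosing $G_i, G_j$ appropriately we can make $\{u_1,u_2,v_1,v_2\}$ a basis. For $3+1$: three rows of $G_iA_i$ already span the full rowspace of $A_i$ (a $3$-plane), and we need one more row $v$ of $G_jA_j$ outside that plane — possible iff the rowspace of $A_j$ is not contained in the rowspace of $A_i$, which holds since $A_i, A_j$ are rank $3$ with distinct kernels.
\item If the four rows come from three or four cameras: even easier, since three distinct-focus cameras have rowspaces in sufficiently general position; a dimension count plus choice of $G$'s suffices.
\end{enumerate}

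In each case the existence of a single good specialization shows the minor polynomial is not identically zero in $(G_1,\ldots,G_n)$, hence its nonvanishing locus is a nonempty Zariski-open subset of $(\GL_3)^n$; intersecting finitely many such dense opens gives a dense open on which $\mathcal{G}\mathcal{A}$ is minor-generic, and in particular a rational (indeed, generic) choice of $\mathcal{G}$ works. The main obstacle, as flagged, is getting the case analysis for the $3+1$ and $2+2$ partitions genuinely right — in particular, correctly formulating the definition of minor-genericity so that minors forced to vanish by rank reasons are excluded — and being careful that ``distinct foci'' is exactly the hypothesis that makes the relevant rowspaces distinct $3$-planes, which is the one geometric input the whole argument rests on.
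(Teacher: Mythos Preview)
Your approach is essentially the same as the paper's: the forward direction via the common annihilator $\mathbf{c}^\perp$, and the converse via a Zariski-openness argument over the space of image-coordinate changes, reduced to exhibiting one good specialization for each partition of $4$ across cameras. The paper organizes the converse identically, treating the four partitions $(3,1)$, $(2,2)$, $(2,1,1)$, $(1,1,1,1)$ in turn.

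One correction: your case~1 is vacuous and your definitional worry is misplaced. Each camera $A_i$ is a $3\times 4$ matrix, so it contributes only \emph{three} rows to the stacked $3n\times 4$ matrix; it is impossible to select four rows from a single camera. There is no ``forced-to-vanish'' minor to exclude from the definition, and no need to consult \cite{AST11} for a restricted reading of minor-generic. Simply delete that case. Separately, your treatment of the $(2,1,1)$ and $(1,1,1,1)$ partitions (``even easier'') is a bit thin; the paper does spell these out, and you should too --- for $(1,1,1,1)$ in particular you need to choose the four vectors sequentially so that each avoids the span of the previous ones \emph{and} the relevant intersections of the $L_i$, which is a short but not entirely content-free argument.
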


\begin{proof} Let $L_i \subset \mathbb{C}^4$ denote the three-dimensional row span of $A_i$. 
If $A_i$ and $A_j$ have the same focal point then $L_i = L_j$ and hence any four of the six rows 
of $A_i$ and $A_j$ are linearly dependent and $\mathcal{A}$ is not minor-generic. This proves the first statement.

Now suppose the foci of cameras in $\mathcal{A}$ are pairwise distinct. This means that the planes $L_i$ are pairwise distinct. For any $G_i \in \textup{GL}_3$, the 
rows of $G_iA_i$ form a basis of $L_i$. By choosing $G_i$ appropriately, the three rows of $A_i$ can be sent to any choice of three linearly independent vectors in $L_i$. We need to show that there is a choice of $G_i$ such that no four rows from the matrices 
$G_iA_i$ are linearly dependent.

Consider the $3n \times 4$ matrix obtained by vertically stacking the cameras in $\mathcal{A}$, as a point in $(\mathbb{C}^4)^{3n}$, with coordinates $x_{kl}^i$ representing the $(k,l)$-entry of the $i$th camera.  We will identify this point in $(\mathbb{C}^4)^{3n}$ with the corresponding $3n \times 4$ matrix, and stack of $n$ cameras, and call all of them $\mathcal{A}$. Let $\mathcal{A}(x)$ denote the symbolic $3n \times 4$ matrix with entries $x^i_{kl}$. 
For $\sigma \in {[3n] \choose 4}$, let  $d_\sigma$ denote the determinant of the $4 \times 4$ submatrix of $\mathcal{A}(x)$ with rows indexed by $\sigma$.
These cut out ${3n \choose 4}$ quartic hypersurfaces $\mathbf{V}(d_\si)$ in $(\mathbb{C}^4)^{3n}$. Let $v_i$ denote the normal of the hyperplane $L_i \subset \mathbb{C}^4$. 
Impose linear conditions saying that the rows  of $\mathcal{A}(x)$, numbered $3i, 3i+1, 3i+2$, dot to zero with $v_i$. These $3n$ equations determine a subspace $L$ in $(\mathbb{C}^4)^{3n}$ of dimension at least $9n=12n-3n$. The given point $\mathcal{A}$ lies in $L$. We need to show that there is a choice of $\mathcal{G} \in (\textup{GL}_3)^n$ such that 
$\mathcal{G} \mathcal{A}$ (which again lies in $L$) avoids the determinantal surfaces. This is equivalent to picking a basis for each $L_i$ that stack together to 
a $\mathcal{B} \in L \minus \bigcup_\sigma \mathbf{V}(d_\sigma)$.

We first show that $L$ is not contained in any $\mathbf{V}(d_\sigma)$ by exhibiting a point in
$L \minus \mathbf{V}(d_\si)$ for each $\si$. Since at most four cameras can be involved in 
any $d_\si$, we may assume without loss of generality that $\sigma$ involves only rows of the 
first four cameras. There are four cases to consider depending on how many rows these four cameras contribute to $\sigma$ --- the possibilities being  
$(3,1,0,0)$, $(2,2,0,0)$, $(2,1,1,0)$, and $(1,1,1,1)$. In each case we will produce a 
$\mathcal{B} \in L \minus \mathbf{V}(d_\si)$. A key observation is that $A_i$ and $A_j$ having distinct foci implies $L_i \cap L_j$ is a proper subspace of both $L_i$ and $L_j$ for all $i,j$. 
Our starting point in each case below is $\mathcal{A} \in L$ which we modify to the needed $\mathcal{B}$ by replacing the bases of $L_i$ that provide the rows of $A_i$.

Case 1. \; \; (3,1,0,0): Modify $\mathcal{A}$ to $\mathcal{B}$ by choosing 
a basis for $L_2$ to be the three rows of $B_2$ so that 
no element in this basis lies in $L_1 \cap L_2$. Then  
$\mathcal{B}$ does not vanish on $d_\si$.

Case 2. \; \; (2,2,0,0): Choose a basis for $L_1$ such that the two rows 
$v_1, v_2$ contributing to $\si$ from the first camera are chosen from $L_1 \setminus L_2$. Then $L_2 \cap \Span\{v_1,v_2\}$ is a proper subspace of $L_2$ of dimension at most one. Therefore taking two linearly independent vectors $v_3,v_4$ outside of this subspace as the two rows from $L_2$ creates a $\mathcal{B}$ that does not vanish on $d_\si$.

Case 3. \;\; (2,1,1,0): Choose a basis for $L_1$ such that the two contributing rows $v_1, v_2$ from the first camera lie in $L_1 \setminus (L_2 \cup L_3)$. Choose the row $v_3$ from $L_2$ such that $v_3 \in L_2 \setminus (\Span\{v_1, v_2\} \cup L_3)$, which forces $L_3 \cap \Span\{v_1,v_2, v_3\}$ to be a proper subspace of $L_3$. Taking $v_4$ outside this subspace, we get a point $\mathcal{B} \in L$ at which $d_\si$ does not vanish.

Case 4. \;\; (1,1,1,1):  Choose $v_1 \in L_1 \minus (L_2 \cup L_3 \cup L_4)$, $v_2 \in L_2\minus (\Span\{v_1\} \cup L_3 \cup L_4)$, $v_3 \in L_3 \minus (\Span\{v_1,v_2 \} \cup L_4)$, and $v_4 \in L_4\minus (\Span\{v_1, v_2,v_3\} )$. By construction, we get a point in $L$ at which $d_\si$ does not vanish.

Therefore, $L \cap \mathbf{V}(d_\sigma)$ is a proper subvariety of $L$ for each $\sigma$, and 
a generic choice of $\mathcal{G}$ will put $\mathcal{G} \mathcal{A} \in L \minus \bigcup_\sigma \mathbf{V}(d_\sigma)$.
\end{proof}

We note that $\mathcal{A}$ having distinct foci does not imply that $\mathcal{A}$ is minor-generic.
A simple example would be an arrangement of four translational cameras; the submatrix consisting of the 
four first rows in each camera has zero determinant. However,  having distinct foci allows the camera arrangement to be made minor-generic by the action of a tuple $\mathcal{G}$. We are now ready to prove the main theorem of this section.

\begin{theorem} \label{thm:BA+TA=JA} 
Let $\mathcal{A}$ be an arrangement of cameras with 
distinct foci. Then $M_\mathcal{A} = H^2_{\mathcal{A}} + H^3_{\mathcal{A}}$.
 
\end{theorem}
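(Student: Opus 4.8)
The plan is to reduce the statement to the minor-generic case already established in Corollary~\ref{cor:minorgeneric gives BA+TA}, transporting the conclusion across the image-coordinate changes $\chi_\mathcal{G}$ studied in Section~\ref{sec:kfocal ideals}.

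First I would invoke Lemma~\ref{lem:minor-generic}: since the cameras of $\mathcal{A}$ have pairwise distinct foci, there is a tuple $\mathcal{G} = (G_1, \ldots, G_n) \in (\GL_3)^n$ for which $\mathcal{G}\mathcal{A}$ is minor-generic. Corollary~\ref{cor:minorgeneric gives BA+TA} then applies to $\mathcal{G}\mathcal{A}$ and gives
\[
M_{\mathcal{G}\mathcal{A}} = H^2_{\mathcal{G}\mathcal{A}} + H^3_{\mathcal{G}\mathcal{A}}.
\]

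Next I would pull this identity back to $\mathcal{A}$ by applying the map $\chi_{\mathcal{G}^{-1}}$ on $\mathbb{C}[p_1, \ldots, p_n]$. Being induced by the invertible linear substitution $p_i \mapsto G_i p_i$, the map $\chi_{\mathcal{G}^{-1}}$ is a ring automorphism of $\mathbb{C}[p_1, \ldots, p_n]$; in particular it sends the ideal generated by a set of polynomials to the ideal generated by their images, and it preserves sums of ideals. By Lemma~\ref{lem:coordinate change}, $\chi_{\mathcal{G}^{-1}}(M_{\mathcal{G}\mathcal{A}}) = M_\mathcal{A}$, and by Lemma~\ref{lem:kfocal ideals map}, $\chi_{\mathcal{G}^{-1}}(H^k_{\mathcal{G}\mathcal{A}}) = H^k_\mathcal{A}$ for every $k$. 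Applying $\chi_{\mathcal{G}^{-1}}$ to both sides of the displayed equation therefore yields
\[
M_\mathcal{A} = \chi_{\mathcal{G}^{-1}}(M_{\mathcal{G}\mathcal{A}}) = \chi_{\mathcal{G}^{-1}}(H^2_{\mathcal{G}\mathcal{A}}) + \chi_{\mathcal{G}^{-1}}(H^3_{\mathcal{G}\mathcal{A}}) = H^2_\mathcal{A} + H^3_\mathcal{A},
\]
which is exactly the assertion of the theorem.

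I do not expect a real obstacle at this stage: the substantive work has already been distributed among the supporting results — Corollary~\ref{cor:minorgeneric gives BA+TA} (which relies on the universal Gröbner basis theorem of \cite{AST11} together with Corollary~\ref{cor:Q inside T for general cameras} eliminating the quadrifocals) and Lemma~\ref{lem:minor-generic} (the geometric reinterpretation of minor-genericity). The only point deserving a line of justification is that $\chi_{\mathcal{G}^{-1}}$ genuinely commutes with the ideal-theoretic operations in play; this is immediate once one notes that it is an automorphism of the polynomial ring coming from an invertible change of variables, so the final chain of equalities above is valid.
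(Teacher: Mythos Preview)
Your proposal is correct and follows essentially the same route as the paper: invoke Lemma~\ref{lem:minor-generic} to pass to a minor-generic $\mathcal{G}\mathcal{A}$, apply Corollary~\ref{cor:minorgeneric gives BA+TA} there, and then transport the identity back via $\chi_{\mathcal{G}^{-1}}$ using Lemmas~\ref{lem:coordinate change} and~\ref{lem:kfocal ideals map}. Your added remark that $\chi_{\mathcal{G}^{-1}}$ is a ring automorphism (hence preserves ideal sums) makes explicit a step the paper leaves implicit, but otherwise the arguments coincide.
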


\begin{proof}
By Lemma~\ref{lem:minor-generic}, there exists $\mathcal{G} \in (GL_3)^n$ such that $\mathcal{G} \mathcal{A}$ is minor-generic. Then, by Corollary~\ref{cor:minorgeneric gives BA+TA}, 
$M_{\mathcal{G}\mathcal{A}} = H^2_{\mathcal{G}\mathcal{A}} + H^3_{\mathcal{G}\mathcal{A}}$. 
 Therefore, by Lemmas~\ref{lem:coordinate change} and~\ref{lem:kfocal ideals map}, we get 
 \begin{align*}
M_{\mathcal{A}} = \chi_{\mathcal{G}^{-1}}(M_{\mathcal{G}\mathcal{A}}) &= 
 \chi_{\mathcal{G}^{-1}}(H^2_{\mathcal{G}\mathcal{A}})+
 \chi_{\mathcal{G}^{-1}}(H^3_{\mathcal{G}\mathcal{A}}) \\
 &=  H^2_{\mathcal{A}} + H^3_{\mathcal{A}}
 \end{align*}
\end{proof}


Proposition 5(1) in \cite{THP15} says that the $H^2_\mathcal{A}$ and $H^3_\mathcal{A}$ together cut out the multiview variety which implies that $H^2_\mathcal{A} + H^3_\mathcal{A} \subseteq \mathcal{M}_\mathcal{A}$. Theorem~\ref{thm:BA+TA=JA} shows that these polynomials also generate the multiview ideal providing the analogous ideal-theoretic statement. 


Theorem~\ref{thm:BA+TA=JA} improves on Corollary 2.7 in \cite{AST11} which states that 
when the foci of the cameras $A_i$ are in linearly general position, then $M_\mathcal{A}$ is generated by the bifocals and trifocals. Theorem~\ref{thm:BA+TA=JA} requires no sophisticated condition on the cameras beyond 
the foci being pairwise distinct. 



Conca \etal~\cite{conca2017cartwright} and Li~\cite{L18} also consider the vanishing ideal of the image of linear map from a projective space to a product of projective spaces. It is shown in~\cite{conca2017cartwright} that this ideal is Cartwright-Sturmfels, meaning that its initial ideal is radical after a generic change of coordinates. Both of these works allow for  projective spaces of arbitrary dimension. Specializing to our situation, Li's results show that $M_\mathcal{A} = \sum_{k = 2}^n H_\mathcal{A}^k$ while we prove that $M_\mathcal{A} = H_\mathcal{A}^2 + H_\mathcal{A}^3$.

Just like in \cite{THP15} where the results automatically generalized from 
projective cameras to Euclidean cameras, Theorem~\ref{thm:BA+TA=JA} also generalizes to Euclidean cameras. Recall that a camera $A_i$ is Euclidean if it is of the form $A_i=[R_i \,\, t_i]$ where $R_i \in \textup{SO}_3$.

\begin{corollary} \label{cor:Euclidean cameras} Let $\mathcal{A}$ be an arrangement of Euclidean cameras with pairwise distinct foci. Then $M_\mathcal{A} = H^2_\mathcal{A} + H^3_\mathcal{A}$.
\end{corollary}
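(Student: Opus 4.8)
The plan is to reduce the Euclidean case to Theorem~\ref{thm:BA+TA=JA}, which already covers arbitrary projective cameras with distinct foci, by observing that a Euclidean camera is a special case of a projective camera. Since $\mathcal{A}$ is an arrangement of Euclidean cameras with pairwise distinct foci, it is in particular an arrangement of (finite, full-rank) projective cameras with pairwise distinct foci, so Theorem~\ref{thm:BA+TA=JA} applies verbatim and yields $M_\mathcal{A} = H^2_\mathcal{A} + H^3_\mathcal{A}$.

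There is essentially nothing to prove here beyond checking that the hypotheses of Theorem~\ref{thm:BA+TA=JA} are genuinely met. First I would note that a Euclidean camera $A_i = [R_i \,\, t_i]$ with $R_i \in \textup{SO}_3$ has rank three (its left $3 \times 3$ block $R_i$ is invertible), so it is a legitimate general projective camera in the sense of Section~\ref{sec:introduction}; in fact it is a finite camera. Second, the definitions of the $k$-focal ideals $H^k_\mathcal{A}$ and of the multiview ideal $M_\mathcal{A}$ depend only on the matrices $A_i$ themselves, not on any structure (Euclidean, affine, projective) one imagines them to carry, so all the intermediate lemmas --- Lemma~\ref{lem:kfocal ideals map}, Corollary~\ref{cor:Q inside T for general cameras}, Lemma~\ref{lem:minor-generic} --- apply unchanged. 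Third, the standing hypothesis of Theorem~\ref{thm:BA+TA=JA} is exactly ``distinct foci,'' which is assumed. Hence the conclusion is immediate.

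The only conceivable subtlety --- and it is the ``main obstacle'' only in the sense of being the one thing worth a sentence --- is that $\mathcal{G}\mathcal{A}$ in the proof of Theorem~\ref{thm:BA+TA=JA} is obtained by left-multiplying each Euclidean $A_i$ by some $G_i \in \textup{GL}_3$, which in general destroys the Euclidean (rotation-plus-translation) form of the cameras. This is harmless: the proof of Theorem~\ref{thm:BA+TA=JA} never needs $\mathcal{G}\mathcal{A}$ to be Euclidean, only minor-generic, and the conclusion $M_\mathcal{A} = H^2_\mathcal{A} + H^3_\mathcal{A}$ is pulled back to the original arrangement $\mathcal{A}$ via $\chi_{\mathcal{G}^{-1}}$, which preserves the Euclidean arrangement we started with. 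So the argument is simply: Euclidean cameras with distinct foci are projective cameras with distinct foci, apply Theorem~\ref{thm:BA+TA=JA}, done.

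\begin{proof}
A Euclidean camera $A_i = [R_i \,\, t_i]$ with $R_i \in \textup{SO}_3$ is in particular a rank-three matrix in $\R^{3\times 4}$, hence a general projective camera; moreover it is a finite camera since $R_i$ is invertible. The ideals $H^k_\mathcal{A}$ and $M_\mathcal{A}$, as well as all results in Sections~\ref{sec:kfocal ideals} and~\ref{sec:multiview ideal}, are defined purely in terms of the matrices $A_i$ and make no further use of their Euclidean structure. Since by hypothesis the foci of the $A_i$ are pairwise distinct, Theorem~\ref{thm:BA+TA=JA} applies and gives $M_\mathcal{A} = H^2_\mathcal{A} + H^3_\mathcal{A}$.
\end{proof}
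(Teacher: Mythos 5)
Your proposal is correct and matches the paper's treatment: the paper gives no separate argument for this corollary, presenting it as an immediate specialization of Theorem~\ref{thm:BA+TA=JA} since a Euclidean camera is just a particular rank-three projective camera and the only hypothesis needed is pairwise distinct foci. Your extra remark that $\mathcal{G}\mathcal{A}$ need not remain Euclidean is a sensible sanity check but, as you note, irrelevant to the logic.
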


We state one more consequence of Theorem~\ref{thm:BA+TA=JA} which will be needed in the next section. 

\begin{corollary}
\label{cor:limitpoints}
Let $\mathcal{A}$ be a camera arrangement with pairwise distinct foci. Then for any $\mathbf{p}_i \in \P^2$, the points 
$(A_1 \mathbf{c}_i , A_2 \mathbf{c}_i , \dots, \mathbf{p}_i, \dots, A_n \mathbf{c}_i)$ lie in $\mathbf{V}(M_\mathcal{A})$ 
where $\mathbf{c}_i$ is the focal point of $A_i$.
\end{corollary}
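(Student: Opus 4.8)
The plan is to exhibit the point $(A_1\mathbf{c}_i,\dots,\mathbf{p}_i,\dots,A_n\mathbf{c}_i)$ as a limit of points of the form $\varphi_\mathcal{A}(\mathbf{q})$, using that $\mathbf{V}(M_\mathcal{A})$ is closed and contains $\varphi_\mathcal{A}(\P_\R^3)$. Fix $i$; without loss of generality take $i=1$. Choose a path $\mathbf{q}(t)$ in $\P^3$ approaching the focus $\mathbf{c}_1$ as $t\to 0$, along a direction chosen so that the image in the first camera tends to the prescribed point $\mathbf{p}_1$. Concretely, since $A_1$ has rank three and kernel spanned by $\mathbf{c}_1$, the map $A_1$ descends to an isomorphism $\P^3/\mathbf{c}_1 \to \P^2$, so there is a vector $\mathbf{v}$ with $A_1\mathbf{v}$ proportional to $\mathbf{p}_1$; set $\mathbf{q}(t) = \mathbf{c}_1 + t\,\mathbf{v}$.

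Now track the image in each camera. In camera $1$: $A_1\mathbf{q}(t) = t\,A_1\mathbf{v}$, which as a point of $\P^2$ equals $[A_1\mathbf{v}] = \mathbf{p}_1$ for all $t\neq 0$; so the first coordinate is exactly $\mathbf{p}_1$ along the whole path. In camera $j\neq 1$: $A_j\mathbf{q}(t) = A_j\mathbf{c}_1 + t\,A_j\mathbf{v}$. Because the foci are pairwise distinct, $\mathbf{c}_1$ is not in the kernel of $A_j$, so $A_j\mathbf{c}_1\neq 0$ in $\C^3$, and hence as $t\to 0$ the point $[A_j\mathbf{c}_1 + t A_j\mathbf{v}]\in\P^2$ converges to $[A_j\mathbf{c}_1] = A_j\mathbf{c}_1$. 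Therefore the curve $t\mapsto \varphi_\mathcal{A}(\mathbf{q}(t))$, defined for all small $t\neq 0$ (where $\mathbf{q}(t)$ avoids all foci — possible for generic $\mathbf{v}$, or after shrinking the interval, since the other foci impose finitely many excluded values of $t$), lies in $\varphi_\mathcal{A}(\P_\R^3)\subseteq \mathbf{V}(M_\mathcal{A})$ and converges to $(\mathbf{p}_1, A_2\mathbf{c}_1, \dots, A_n\mathbf{c}_1)$. Since $\mathbf{V}(M_\mathcal{A})$ is Zariski closed, hence closed in the Euclidean topology on $(\P^2)^n$, the limit point lies in $\mathbf{V}(M_\mathcal{A})$.

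I do not expect a serious obstacle here; the one point requiring a little care is the passage to the limit in $(\P^2)^n$ — one should note that the representative $(A_1\mathbf{q}(t),\dots,A_n\mathbf{q}(t))$ of the path, after the coordinate-wise rescaling that divides the first block by $t$, is a continuous $\C^{3n}$-valued curve whose every block has a nonzero limit, so it descends to a continuous curve into $(\P^2)^n$ with the asserted limit. Alternatively, one can avoid topology entirely and argue purely algebraically: the assignment $t\mapsto (A_1\mathbf{v}, A_2\mathbf{c}_1 + tA_2\mathbf{v}, \dots, A_n\mathbf{c}_1 + tA_n\mathbf{v})$ defines a morphism $\A^1\to (\P^2)^n$ whose restriction to $t\neq 0$ (and away from the finitely many $t$ hitting another focus) factors through $\varphi_\mathcal{A}$, hence lands in $\mathbf{V}(M_\mathcal{A})$; since $\mathbf{V}(M_\mathcal{A})$ is closed and $\A^1$ minus finitely many points is dense in $\A^1$, the whole morphism lands in $\mathbf{V}(M_\mathcal{A})$, and evaluating at $t=0$ gives the claim. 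Note that Theorem~\ref{thm:BA+TA=JA} is not actually needed for this corollary — only that $\mathbf{V}(M_\mathcal{A})$ is the Zariski closure of $\varphi_\mathcal{A}(\P_\R^3)$ — though the corollary is placed here because it is used together with the explicit generators in the next section.
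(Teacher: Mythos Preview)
Your proof is correct and takes a genuinely different route from the paper's. The paper's argument uses Theorem~\ref{thm:BA+TA=JA} directly: since $M_\mathcal{A} = H^2_\mathcal{A} + H^3_\mathcal{A}$, it suffices to check that every bifocal and trifocal vanishes at the given point, which the paper does by exhibiting explicit nonzero kernel vectors of the matrices $\mathcal{A}_\sigma(\mathbf{p})$ (for instance $(\mathbf{c}_i, 0, -1) \in \ker\mathcal{A}_{\{i,j\}}(\mathbf{p}_i, A_j\mathbf{c}_i)$, and similarly for triples). Your approach instead realizes the point as a limit along the pencil $\mathbf{q}(t)=\mathbf{c}_1+t\mathbf{v}$ through the focus and appeals only to the definition of $\mathbf{V}(M_\mathcal{A})$ as a Zariski closure. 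This is more geometric and, as you observe, logically independent of Theorem~\ref{thm:BA+TA=JA}; it also explains why these are naturally called ``limit points''. The paper's approach, by contrast, ties the corollary to the newly established generating set and serves as a concrete sanity check on Theorem~\ref{thm:BA+TA=JA}.

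One small point to tighten: when $\mathbf{p}_i$ is a non-real point of $\P^2_\C$, the vector $\mathbf{v}$ (and hence the path $\mathbf{q}(t)$) must be complex, so the curve lands in $\varphi_\mathcal{A}(\P^3_\C)$ rather than $\varphi_\mathcal{A}(\P^3_\R)$ as you wrote. This is harmless---any polynomial vanishing on $\varphi_\mathcal{A}(\P^3_\R)$ vanishes on $\varphi_\mathcal{A}(\P^3_\C)$ by Zariski density of $\P^3_\R$ in $\P^3_\C$, so $\varphi_\mathcal{A}(\P^3_\C)\subseteq \mathbf{V}(M_\mathcal{A})$---and your algebraic alternative via a morphism from $\A^1$ goes through verbatim over $\C$ once this containment is noted.
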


\begin{proof}
By Theorem~\ref{thm:BA+TA=JA}, it suffices to show that for any $i$, the bifocals and trifocals vanish on the points 
$(A_1 \mathbf{c}_i , A_2 \mathbf{c}_i , \dots, \mathbf{p}_i, \dots, A_n \mathbf{c}_i)$. For any pair of cameras $\{i, j\}$, observe that $(\mathbf{c}_i,  0, -1)$ is a nonzero element of $\ker\mathcal{A}_{\{i, j\}} (\mathbf{p}_i,  A_j \mathbf{c}_i)$. For any pair $\{j,k\}$ not containing camera $i$, $(\mathbf{c}_i,  -1, -1)$ is a nonzero element of $\ker\mathcal{A}_{\{j,k\}}(A_j \mathbf{c}_i,  A_k \mathbf{c}_i)$. Hence all polynomials of $H_\mathcal{A}^2$ vanish on $(A_1 \mathbf{c}_i , A_2 \mathbf{c}_i , \dots, \mathbf{p}_i, \dots, A_n \mathbf{c}_i)$. A similar argument applies to any triples of cameras, from which it follows that all polynomials in $H_\mathcal{A}^3$ vanish on $(A_1 \mathbf{c}_i , A_2 \mathbf{c}_i , \dots, \mathbf{p}_i, \dots, A_n \mathbf{c}_i)$. 
\end{proof}

The image of focal point $i$ in image $j$, \ie, $A_j \mathbf{c}_i$, is called the {\em epipole in image $j$ relative to image $i$}. Corollary~\ref{cor:limitpoints} shows that while the product of an arbitrary point in image $i$ with all epipoles relative to image $i$ does not appear in the image of $\phi_\mathcal{A}$, these points appear in the multiview variety after taking Zariski closure. See also Proposition 1 in~\cite{THP15}.

We conclude this section by showing that the hypothesis in Theorem~\ref{thm:BA+TA=JA} cannot be relaxed, namely if a pair of foci of cameras in $\mathcal{A}$ coincide, then the multiview ideal is strictly larger than the ideal generated by bifocals and trifocals.
\begin{example}
\label{ex:coincident foci2}
Consider the four translational camera arrangement $\mathcal{A}$ where $\mathbf{t}_1, \mathbf{t}_2 = (0,0,0)$, $ \mathbf{t}_3  = (1,1,1)$, $\mathbf{t}_4 = (-1,-1,-1)$. Eliminating the variables $q$ and $\la_i$ from the ideal $\<A_i q - \la_i p_i : i = 1, \dots, n\>$, we can directly obtain $M_\mathcal{A}$. Computing a primary decomposition of $H^2_\mathcal{A} + H^3_\mathcal{A}$, we find that 
\[H^2_\mathcal{A} + H^3_\mathcal{A} = M_\mathcal{A} \cap \langle {y}_{4}-{z}_{4},{y}_{3}-{z}_{3},{x}_{4}-{z}_{4},{x}_{3}-{z}_{3} \rangle.
\]
The extra component $\langle {y}_{4}-{z}_{4},{y}_{3}-{z}_{3},{x}_{4}-{z}_{4},{x}_{3}-{z}_{3} \rangle$ cuts out the points $(\mathbf{p}_1, \mathbf{p}_2, A_3\mathbf{c}_1, A_4 \mathbf{c}_1)$, and from the primary decomposition we see that the 
 projective variety they form is not contained in $\mathbf{V}(M_\mathcal{A})$.
\end{example}

\section{More Ideals for the Multiview Variety}
 \label{sec:determinantal ideals}
 
In the computer vision literature, there are several sets of polynomials that have been shown to vanish on the space of images $\varphi_\mathcal{A}(\P_\R^3)$, and hence they also vanish on the multiview variety. We now consider three such sets of polynomials and the ideals they generate, and compare them to the multiview ideal $M_\mathcal{A}$. 

\subsection{Heyden and \AA str\"om~\cite{HA97} }
\label{subsec:HA}
Heyden and \AA str\"om were the first to do an algebraic study of the multiview variety, by studying the $n$-focal ideal $H^n_\mathcal{A}$~\cite{HA97}. The variety of this ideal is indeed the multiview variety. 

\begin{lemma}
\label{lem:vanishingHA}
For any camera arrangement $\mathcal{A}$ with pairwise distinct foci, $\mathbf{V}(M_\mathcal{A}) = \mathbf{V}(H_\mathcal{A}^n)$. 
\end{lemma}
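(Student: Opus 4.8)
The plan is to prove the two inclusions separately. The inclusion $\mathbf{V}(M_\mathcal{A}) \subseteq \mathbf{V}(H^n_\mathcal{A})$ is immediate: as observed at the start of Section~\ref{sec:kfocal ideals}, the matrix $\mathcal{A}(p)$ in~\eqref{eq:HZmatrix} is precisely $\mathcal{A}_{[n]}(p)$, so $H^n_\mathcal{A}$ is the ideal of maximal minors of $\mathcal{A}(p)$ and $H^n_\mathcal{A} \subseteq M_\mathcal{A}$; ideal containment reverses on varieties. (Note this direction needs no hypothesis on the foci.)

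For the reverse inclusion $\mathbf{V}(H^n_\mathcal{A}) \subseteq \mathbf{V}(M_\mathcal{A})$, I would take a point $\mathbf{p} = (\mathbf{p}_1,\dots,\mathbf{p}_n) \in \mathbf{V}(H^n_\mathcal{A}) \subseteq (\P^2)^n$. The vanishing of every $(4+n)\times(4+n)$ minor of the $3n \times (4+n)$ matrix $\mathcal{A}(\mathbf{p})$ is exactly the statement that $\mathcal{A}(\mathbf{p})$ has a nonzero kernel vector, which I write as $(\mathbf{q},-\lambda_1,\dots,-\lambda_n)$; reading off block rows gives $A_i\mathbf{q} = \lambda_i \mathbf{p}_i$ for all $i$. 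First I would observe that $\mathbf{q}\neq 0$: otherwise $\lambda_i \mathbf{p}_i = 0$ for each $i$ with each $\mathbf{p}_i\neq 0$, forcing all $\lambda_i = 0$ and contradicting that the kernel vector is nonzero. Then I split into two cases according to whether $\mathbf{q}$ is a camera focus. If $\mathbf{q}\notin\{\mathbf{c}_1,\dots,\mathbf{c}_n\}$, then $A_i\mathbf{q}\neq 0$ for all $i$, so each $\lambda_i\neq 0$ and $[\mathbf{p}_i]=[A_i\mathbf{q}]$ in $\P^2$, i.e.\ $\mathbf{p}=\varphi_\mathcal{A}(\mathbf{q})$. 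For any $g\in M_\mathcal{A}$, the map $\mathbf{q}'\mapsto g(A_1\mathbf{q}',\dots,A_n\mathbf{q}')$ is a polynomial in the coordinates of $\mathbf{q}'$ vanishing on all of $\P_\R^3$, hence identically zero by Zariski-density of $\P_\R^3$ in $\P_\C^3$; in particular it vanishes at our (possibly complex) $\mathbf{q}$, so $g(\mathbf{p})=0$, and therefore $\mathbf{p}\in\mathbf{V}(M_\mathcal{A})$.

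If instead $\mathbf{q}=\mathbf{c}_j$ for some $j$, then because the foci are pairwise distinct $A_i\mathbf{c}_j\neq 0$ for every $i\neq j$, so $\lambda_i\neq 0$ and $[\mathbf{p}_i]=[A_i\mathbf{c}_j]$ (the epipole in image $i$ relative to image $j$) for all $i\neq j$, while $\mathbf{p}_j$ is unconstrained. Hence $\mathbf{p}=(A_1\mathbf{c}_j,\dots,\mathbf{p}_j,\dots,A_n\mathbf{c}_j)$, which lies in $\mathbf{V}(M_\mathcal{A})$ by Corollary~\ref{cor:limitpoints}. In both cases $\mathbf{p}\in\mathbf{V}(M_\mathcal{A})$, which finishes the reverse inclusion and hence the lemma.

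The main obstacle is the epipole case: this is exactly where the distinct-foci hypothesis is used (it forces $A_i\mathbf{c}_j\neq 0$ for $i\neq j$, so the other images are genuinely pinned to the epipoles), and it is handled by appealing to Corollary~\ref{cor:limitpoints}, which itself rests on Theorem~\ref{thm:BA+TA=JA}. A secondary point requiring care is the transition from real to complex $\mathbf{q}$ in the generic case, for which one cannot simply cite ``closure'' but should run the density argument indicated above.
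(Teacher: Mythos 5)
Your proof is correct and follows essentially the same route as the paper's: the forward inclusion from $H^n_\mathcal{A}\subseteq M_\mathcal{A}$, then for the reverse inclusion the same kernel-vector argument showing $\mathbf{q}\neq 0$, the same split into the generic case and the focal-point case, with the latter handled by Corollary~\ref{cor:limitpoints} exactly as in the paper. Your explicit Zariski-density argument in the generic case is just a spelled-out version of the paper's continuity/closure step ($\varphi_\mathcal{A}(\overline{\P_\R^3})\subseteq\overline{\varphi_\mathcal{A}(\P_\R^3)}$ with $\overline{\P_\R^3}=\P_\C^3$), so there is no substantive difference.
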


\begin{proof}
Recall from the image formation equations, $A_i \mathbf{q} = \lambda_i \mathbf{p}_i$ for all $i=1,\ldots,n$, that if 
$\mathbf{p} = (\mathbf{p}_1, \ldots, \mathbf{p}_n)$ lies in the image of $\varphi_\mathcal{A}$ then the matrix  $\mathcal{A}(\mathbf{p})$ has a non-trivial kernel. This means that all maximal minors of $\mathcal{A}(p)$ vanish on the image of $\varphi_\mathcal{A}$, and therefore also on its Zariski closure, which is the multiview variety. Therefore, 
$\mathbf{V}(M_\mathcal{A}) \subseteq  \mathbf{V}(H_\mathcal{A}^n)$. 

To see the reverse inclusion, suppose $\mathbf{p} = (\mathbf{p}_1, \ldots, \mathbf{p}_n) \in \mathbf{V}(H_\mathcal{A}^n)$ which means that $\mathcal{A}(\mathbf{p})$ is rank deficient and there is a nonzero 
vector of the form $(\mathbf{q},-\lambda_1, 
\ldots, -\lambda_n)$ in the kernel of $\mathcal{A}(\mathbf{p})$. If $\mathbf{q} = 0$, then 
we will get that $\lambda_i \mathbf{p}_i = 0$ for all $i$. However, since $\mathbf{p}_i \neq 0$, it must be that $\lambda_i=0$ for all $i$ and hence the vector in the kernel is the zero vector which is a contradiction. Therefore, there is a nonzero vector $\mathbf{q}$ such that $A_i \mathbf{q} = \lambda_i \mathbf{p}_i$ for some $\lambda_i$. If $\mathbf{q}$ is not the focal point of any camera, then $\mathbf{p}$ lies in $\varphi_\mathcal{A}(\P_\C^3)$. Since $\phi_\mathcal{A}$ is continuous, $\phi_{\mathcal{A}}(\bar{\P_\R^3})\subseteq \bar{\phi_\mathcal{A}(\P_\R^3)}$. It follows that $\phi_\mathcal{A}(\P^3_\C) \subseteq \mathbf{V}(M_\mathcal{A})$ because $\bar{\P_\R^3} = \P_\C^3$ and so $\mathbf{p} \in \mathbf{V}(M_\mathcal{A})$.  On the other hand, if $\mathbf{q}$ is the focal point $\mathbf{c}_i$ of camera $i$, then $\mathbf{p}_j = A_j\mathbf{c}_i$ for all $j \neq i$, and by  
Corollary~\ref{cor:limitpoints}, $\mathbf{p} \in \mathbf{V}(M_\mathcal{A})$. Thus we get that $\mathbf{V}(M_\mathcal{A}) \supseteq  \mathbf{V}(H_\mathcal{A}^n)$. 

\end{proof}

Example~\ref{ex:coincident foci2} shows that the assumption of distinct foci is necessary for Lemma~\ref{lem:vanishingHA}.
In this example, $n=4$ and $H^4_\mathcal{A} = H^2_\mathcal{A} + H^3_\mathcal{A}$ by Corollary~\ref{cor:Q inside T for general cameras}. We see that $\mathbf{V}(H^4_\mathcal{A})$ has a component other than $\mathbf{V}(M_\mathcal{A})$.

\subsection{Faugeras et al.~\cite{faugeras1995geometry}.}
The second set of polynomials we will study were constructed by Faugeras \& Mourrain while proving that the multiview variety is cut out by epipolar/bifocal and trifocal polynomials, and that the quadrifocal constraints corresponding to the quadrifocal tensor were not needed~\cite{FLP01,faugeras1995geometry}.

Observe that $A_i \mathbf{q} = \la_i \mathbf{p}_i$ implies $A_i \mathbf{q} \times \mathbf{p}_i = 0$, for each $i$, or equivalently, $[p_i]_\times A_i	\mathbf{q} = 0$, where 
\begin{align}
\label{eq:ptimes}
[p_i]_\times  = \left( \begin{matrix}  0 & -z_i & y_i \\ z_i & 0 & -x_i \\-y_i & x_i&0  \end{matrix} \right)
\end{align}
represents taking cross product with $p_i$, \ie, $[p_i]_\times v = p_i \times v$. Stacking all $3 \times 4$ matrices $[p_i]_\times A_i$, we get the $3n \times 4$ partially symbolic matrix
\begin{align}
\mathcal{A}^{F}(p) := \left( \begin{matrix}  [p_1]_\times A_1 \\ [p_2]_\times A_2 \\ \vdots \\ [p_n]_\times A_n \end{matrix} \right).
\end{align}
If there is a world point $\mathbf{q}$ satisfying $A_i \mathbf{q} \times \mathbf{p}_i = 0$, then this matrix is rank deficient and all maximal minors of $\mathcal{A}_F(p)$ vanishes on the multiview variety. 
\begin{definition}
The ideal of all maximal $4\times 4$ minors of $\mathcal{A}^{F}(p)$, denoted by $F_\mathcal{A}$, will be called the {\bf Faugeras ideal} of the arrangement $\mathcal{A}$. We denote the subideals of $F_\mathcal{A}$ generated by minors involving only two and three cameras by $F^2_\mathcal{A}$ and $F_\mathcal{A}^3$, respectively. 
\end{definition}

We now describe a sequence of matrix transformations that allow us to obtain $\mathcal{A}^F(p)$ from $\mathcal{A}(p)$. Let $P(p) := \diag([p_1]_\times ,\dots, [p_n]_\times)$ be the symbolic block diagonal matrix of size $3n \times 3n$. Multiplying $\mathcal{A}(p)$ on the left by the block diagonal matrix $P(p)$ and dropping the rightmost $n$ columns of the resulting matrix, we obtain $\mathcal{A}^F(p)$:

\begin{align}
\label{eq:AFdecomposition}
 \mathcal{A}^F(p) = P(p) \mathcal{A}(p) \begin{bmatrix} I_4 \\ 0_{n \times 4} \end{bmatrix}  = P(p)  \mathcal{A}
\end{align}
where as before, we abuse notation to let $\mathcal{A}$ also represents the $3n \times 4$ matrix $[A_1; \dots ; A_n]$ obtained by stacking the cameras vertically. From the matrix constructions of $H_\mathcal{A}^n$ and $F_\mathcal{A}$, we observe that their projective vanishing sets in $(\P^2)^n$ coincide.


\begin{lemma}
\label{lem:vanishingFA}
For any camera arrangement $\mathcal{A}$ with pairwise distinct foci, $\mathbf{V}(M_\mathcal{A}) = \mathbf{V}(F_\mathcal{A}) $. 
\end{lemma}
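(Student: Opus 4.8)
The plan is to reduce, via Lemma~\ref{lem:vanishingHA}, to proving $\mathbf{V}(H_\mathcal{A}^n) = \mathbf{V}(F_\mathcal{A})$, since that lemma already gives $\mathbf{V}(M_\mathcal{A}) = \mathbf{V}(H_\mathcal{A}^n)$ whenever the foci are distinct. Both sides are cut out by the maximal minors of matrices built from the cameras, so I would argue pointwise: a point $\mathbf{p} = (\mathbf{p}_1, \dots, \mathbf{p}_n) \in (\P^2)^n$ lies in $\mathbf{V}(F_\mathcal{A})$ exactly when $\mathcal{A}^F(\mathbf{p})$ (a $3n \times 4$ matrix) is rank deficient, i.e.\ has a nonzero kernel vector in $\C^4$, while $\mathbf{p} \in \mathbf{V}(H_\mathcal{A}^n)$ exactly when $\mathcal{A}(\mathbf{p})$ (a $3n \times (4+n)$ matrix) is rank deficient, i.e.\ there is a nonzero $(\mathbf{q}, -\lambda_1, \dots, -\lambda_n) \in \C^{4+n}$ with $A_i\mathbf{q} = \lambda_i \mathbf{p}_i$ for all $i$.

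The bridge between the two is the factorization $\mathcal{A}^F(p) = P(p)\,\mathcal{A}$ from \eqref{eq:AFdecomposition} combined with the elementary fact that, since each $\mathbf{p}_i$ is a nonzero vector in $\C^3$, the skew matrix $[\mathbf{p}_i]_\times$ has kernel exactly the line $\Span\{\mathbf{p}_i\}$. Thus $\mathbf{p} \in \mathbf{V}(F_\mathcal{A})$ iff there is a nonzero $\mathbf{q} \in \C^4$ with $[\mathbf{p}_i]_\times A_i \mathbf{q} = 0$ for every $i$, equivalently with $A_i\mathbf{q}$ a scalar multiple $\lambda_i \mathbf{p}_i$ of $\mathbf{p}_i$ for every $i$. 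For the inclusion $\mathbf{V}(F_\mathcal{A}) \subseteq \mathbf{V}(H_\mathcal{A}^n)$ this $\mathbf{q}$ and the resulting scalars immediately give the kernel vector $(\mathbf{q}, -\lambda_1, \dots, -\lambda_n)$ of $\mathcal{A}(\mathbf{p})$, which is nonzero because $\mathbf{q} \neq 0$. For the reverse inclusion, given a nonzero kernel vector $(\mathbf{q}, -\lambda)$ of $\mathcal{A}(\mathbf{p})$, I would first note $\mathbf{q} \neq 0$: otherwise $\lambda_i \mathbf{p}_i = 0$ with $\mathbf{p}_i \neq 0$ forces $\lambda = 0$, contradicting nonzeroness (the same observation used in the proof of Lemma~\ref{lem:vanishingHA}); then $A_i\mathbf{q} = \lambda_i\mathbf{p}_i$ yields $[\mathbf{p}_i]_\times A_i\mathbf{q} = \lambda_i(\mathbf{p}_i\times\mathbf{p}_i) = 0$, so $\mathbf{q}$ is a nonzero element of $\ker \mathcal{A}^F(\mathbf{p})$ and $\mathbf{p}\in\mathbf{V}(F_\mathcal{A})$. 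Combining both inclusions with Lemma~\ref{lem:vanishingHA} completes the argument.

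I do not expect a real obstacle here: the whole content is the dictionary between ``$A_i\mathbf{q}$ is parallel to $\mathbf{p}_i$'' and ``$A_i\mathbf{q}$ is a scalar multiple of $\mathbf{p}_i$'', which is precisely where one uses that we work over $(\P^2)^n$ so each $\mathbf{p}_i$ is nonzero. The only place demanding a little care is matching ``rank deficient'' for the two matrices of different widths to the correct kernel statement, and disposing of the degenerate case $\mathbf{q} = 0$, both of which are routine and mirror the earlier proof of Lemma~\ref{lem:vanishingHA}.
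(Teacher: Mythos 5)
Your proposal is correct and follows essentially the same route as the paper: reduce to showing $\mathbf{V}(F_\mathcal{A}) = \mathbf{V}(H_\mathcal{A}^n)$ via Lemma~\ref{lem:vanishingHA} and translate between kernel vectors of $\mathcal{A}^F(\mathbf{p})$ and $\mathcal{A}(\mathbf{p})$ pointwise, including the same argument that $\mathbf{q} \neq 0$. If anything, your phrasing that $A_i\mathbf{q}$ is a scalar multiple $\lambda_i\mathbf{p}_i$ (allowing $\lambda_i = 0$) is slightly more careful than the paper's claim of ``nonzero scale factors.''
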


\begin{proof}
The proof will follow from Lemma~\ref{lem:vanishingHA} if we can show that $\mathbf{V}(F_\mathcal{A}) = \mathbf{V}(H_\mathcal{A}^n)$. 
If $\mathbf{p} \in (\P^2)^n$ is such that $\mathcal{A}^F(\mathbf{p})$ drops rank, then there exists a nonzero $\mathbf{q} \in \ker( \mathcal{A}^F(\mathbf{p}) )$ so that $A_i \mathbf{q} \times \mathbf{p}_i = 0$ for all $i$. This means there exist nonzero scale factors $\lambda_i$ such that $A_i \mathbf{q} = \lambda_i \mathbf{p}_i$. The vector $(\mathbf{q}, - \la_1, \dots, -\la_n)$ is a nontrivial element in $\ker ( \mathcal{A}(\mathbf{p}) )$, so $\mathcal{A}(\mathbf{p})$ is rank deficient. Therefore $\mathbf{V}(F_\mathcal{A}) \subseteq \mathbf{V}(H^n_\mathcal{A})$

For the other inclusion, if there is a nontrivial $(\mathbf{q}, - \la_1, \dots, -\la_n) \in \ker( \mathcal{A}(\mathbf{p}) )$ for some $\mathbf{p} \in (\P^2)^n$, then as in the proof of Lemma~\ref{lem:vanishingHA}, $\mathbf{q}$ must be nonzero, and 
so $\mathbf{q}$ is a nontrivial element of $\ker( \mathcal{A}^F(\mathbf{p}) )$. This shows that $\mathbf{V}(F_\mathcal{A}) \supseteq \mathbf{V}(H^n_\mathcal{A})$, hence $\mathbf{V}(F_\mathcal{A}) = \mathbf{V}(H_\mathcal{A}^n)  = \mathbf{V}(M_\mathcal{A})  $. 

\end{proof}

\subsection{Ma et al.~\cite{YM12}}
The third and final set of polynomials we will study are the so called {\em multiview rank constraints} which were proposed by Ma and collaborators~\cite{YM12} as an alternative to the multilinear constraints studied 
for example in Hartley \& Zisserman~\cite{HZ}.

Suppose $A_1 = [I \, \, 0]$ and $A_i = [ B_i \, \, \mathbf{t}_i]$ for $i \ge 2$. Starting with $\mathcal{A}(p)$, a series of matrix operations are described in Chapter 8 in~\cite{YM12} to arrive at a new set of determinantal polynomials, arising as maximal minors of

\begin{align}
\mathcal{A}^{Y}(p) :=
 \begin{bmatrix} 
p_1 \times (I p_1) & p_1 \times 0 \\
p_2 \times (B_2 p_1) & p_2 \times t_2 \\
\vdots  & \vdots &  \\
\vdots& \vdots & \\
p_n \times (B_n p_1) & p_n \times t_n
\end{bmatrix}. 
\end{align}
\begin{definition}
The ideal of all maximal $2\times 2$ minors of $\mathcal{A}^{Y}(p)$, denoted by $Y_\mathcal{A}$, will be called the {\bf Ma ideal} of the arrangement $\mathcal{A}$. 
\end{definition}

We observe that $\mathcal{A}^Y(p)$ can be obtained from $\mathcal{A}^F(p)$ by multiplying by a single matrix on the right:
\begin{align}
\label{eq:AYdecomposition}
\mathcal{A}^Y(p) = \mathcal{A}^F(p) \begin{bmatrix} p_1 & 0 \\ 0&1 \end{bmatrix}. 
\end{align}
From this we observe that $Y_\mathcal{A}$ has the same projective vanishing set as 
$F_\mathcal{A}$, and hence $H_\mathcal{A}^n$ and $M_\mathcal{A}$. 

\begin{lemma}
\label{lem:vanishingYA}
For any camera arrangement $\mathcal{A}$ with pairwise distinct foci and $A_1 = [I\,\,0]$, $\mathbf{V}(M_\mathcal{A}) = \mathbf{V}(Y_\mathcal{A}) $. 
\end{lemma}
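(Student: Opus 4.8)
The plan is to reduce $\mathbf{V}(Y_\mathcal{A}) = \mathbf{V}(M_\mathcal{A})$ to the already-established identity $\mathbf{V}(F_\mathcal{A}) = \mathbf{V}(M_\mathcal{A})$ of Lemma~\ref{lem:vanishingFA}, using the factorization in~\eqref{eq:AYdecomposition}, namely $\mathcal{A}^Y(p) = \mathcal{A}^F(p) \left(\begin{smallmatrix} p_1 & 0 \\ 0 & 1 \end{smallmatrix}\right)$. First I would verify the easy inclusion: if $\mathbf{p} \in \mathbf{V}(F_\mathcal{A})$, then $\mathcal{A}^F(\mathbf{p})$ drops rank, and since $\mathcal{A}^Y(\mathbf{p})$ is obtained from it by a right multiplication, $\mathcal{A}^Y(\mathbf{p})$ also drops rank (the column space only shrinks), so $\mathbf{p} \in \mathbf{V}(Y_\mathcal{A})$. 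Hence $\mathbf{V}(M_\mathcal{A}) = \mathbf{V}(F_\mathcal{A}) \subseteq \mathbf{V}(Y_\mathcal{A})$.

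For the reverse inclusion, suppose $\mathbf{p} = (\mathbf{p}_1, \dots, \mathbf{p}_n) \in \mathbf{V}(Y_\mathcal{A})$, so $\mathcal{A}^Y(\mathbf{p})$ has rank at most $1$. The subtlety is that the right factor $\left(\begin{smallmatrix} \mathbf{p}_1 & 0 \\ 0 & 1 \end{smallmatrix}\right)$ is a $4 \times 2$ matrix, so right-multiplication is not invertible and rank can genuinely drop; I need to rule out the bad case. Write the columns of $\mathcal{A}^F(\mathbf{p})$ as $v_1, v_2, v_3, v_4 \in \C^{3n}$; then the two columns of $\mathcal{A}^Y(\mathbf{p})$ are $w := x_1 v_1 + y_1 v_2 + z_1 v_3$ (where $\mathbf{p}_1 = (x_1,y_1,z_1)$) and $v_4$. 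The first block-row of $\mathcal{A}^Y(\mathbf{p})$ is $\mathbf{p}_1 \times (I\mathbf{p}_1) = 0$ and $\mathbf{p}_1 \times 0 = 0$, so that block-row is identically zero; this is consistent but uninformative. The real content: $\mathcal{A}^Y(\mathbf{p})$ dropping rank means $w$ and $v_4$ are linearly dependent, i.e.\ there is a nonzero $(\alpha, \beta)$ with $\alpha w + \beta v_4 = 0$, which says $\alpha(x_1 v_1 + y_1 v_2 + z_1 v_3) + \beta v_4 = 0$; that is precisely the statement that the vector $\mathbf{u} := (\alpha x_1, \alpha y_1, \alpha z_1, \beta) \in \C^4$ lies in $\ker \mathcal{A}^F(\mathbf{p})$. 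I then argue $\mathbf{u} \neq 0$: if $\alpha \neq 0$ then $\mathbf{u} \neq 0$ since $\mathbf{p}_1 \neq 0$; if $\alpha = 0$ then $\beta \neq 0$ and $\mathbf{u} = (0,0,0,\beta) \neq 0$. So $\mathcal{A}^F(\mathbf{p})$ has nontrivial kernel, i.e.\ $\mathbf{p} \in \mathbf{V}(F_\mathcal{A})$, and hence $\mathbf{p} \in \mathbf{V}(M_\mathcal{A})$ by Lemma~\ref{lem:vanishingFA}. Combining the two inclusions gives $\mathbf{V}(M_\mathcal{A}) = \mathbf{V}(Y_\mathcal{A})$.

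The main obstacle I anticipate is purely bookkeeping around the degenerate first block-row of $\mathcal{A}^Y(p)$ and making sure the "rank at most $1$" hypothesis is used correctly — in particular, confirming that rank-deficiency of the $3n \times 2$ matrix $\mathcal{A}^Y(\mathbf{p})$ really does force a genuine linear dependence among its two columns that translates back through~\eqref{eq:AYdecomposition} into a nonzero kernel element of $\mathcal{A}^F(\mathbf{p})$, rather than something vacuous. Once the nonvanishing of $\mathbf{u}$ is checked in the two cases above, everything else is immediate from Lemma~\ref{lem:vanishingFA} and Lemma~\ref{lem:vanishingHA}; no new computation in Macaulay2 is needed for this statement.
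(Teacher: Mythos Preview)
Your argument for $\mathbf{V}(Y_\mathcal{A}) \subseteq \mathbf{V}(F_\mathcal{A})$ is correct and matches the paper: a nonzero $(\alpha,\beta)\in\ker\mathcal{A}^Y(\mathbf{p})$ pushes forward via the factor $\left(\begin{smallmatrix}\mathbf{p}_1 & 0\\ 0 & 1\end{smallmatrix}\right)$ to a nonzero element of $\ker\mathcal{A}^F(\mathbf{p})$.

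The gap is in the other inclusion. You claim that if $\mathcal{A}^F(\mathbf{p})$ drops rank then so does $\mathcal{A}^Y(\mathbf{p})$ because ``the column space only shrinks''. But $\mathcal{A}^F(\mathbf{p})$ is $3n\times 4$ and $\mathcal{A}^Y(\mathbf{p})$ is $3n\times 2$, so the inequality $\rank\mathcal{A}^Y(\mathbf{p})\le\rank\mathcal{A}^F(\mathbf{p})$ only tells you $\rank\mathcal{A}^Y(\mathbf{p})\le 3$, which is vacuous for a two-column matrix. Concretely, if $\ker\mathcal{A}^F(\mathbf{p})$ is one-dimensional, there is no reason for that kernel line to meet the $2$-plane $\mathrm{Im}\left(\begin{smallmatrix}\mathbf{p}_1 & 0\\ 0 & 1\end{smallmatrix}\right)\subset\C^4$, and without such an intersection $\mathcal{A}^Y(\mathbf{p})$ can have full rank $2$.

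This is exactly where the hypothesis $A_1=[I\ \ 0]$ enters, and the paper uses it here: if $0\neq\mathbf{q}\in\ker\mathcal{A}^F(\mathbf{p})$, the first block row reads $\mathbf{p}_1\times[I\ \ 0]\mathbf{q}=0$, forcing $(\mathbf{q}_1,\mathbf{q}_2,\mathbf{q}_3)=v_1\mathbf{p}_1$ for some scalar $v_1$. Thus $\mathbf{q}$ \emph{does} lie in the image of the $4\times 2$ factor, and its preimage $(v_1,\mathbf{q}_4)$ is a nonzero element of $\ker\mathcal{A}^Y(\mathbf{p})$. Insert this step and your proof is complete; as written, the ``easy inclusion'' is the one that actually fails.
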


\begin{proof}
If $\mathbf{p} \in (\P^2)^n$ is such that $\mathcal{A}^Y(\mathbf{p})$ drops rank, then there exists a nontrivial $({v}_1, {v}_2) \in \ker ( \mathcal{A}^Y(\mathbf{p}) )$. Therefore, $\mathbf{q} = ({v}_1 \mathbf{p}_1, {v}_2) \in \ker ( \mathcal{A}^F(\mathbf{p}) )$ is nontrivial. Note that it is necessary that we assume $A_1 = [I\, \, 0]$ so that $[\mathbf{p}_1]_\times A_1 (v_1 \mathbf{p}_1, v_2) = v_1 [\mathbf{p}_1]_\times \mathbf{p}_1 = 0$. This shows that $ \mathbf{V}(Y_\mathcal{A}) \subseteq  \mathbf{V}(F_\mathcal{A})$.

For the other inclusion, if $0 \neq \mathbf{q} \in \ker (\mathcal{A}^F(\mathbf{p}) )$ for some $\mathbf{p} \in (\P^2)^n$, then since $\mathbf{p}_1 \times [I\, \, 0] \mathbf{q} = 0$, there exists a scalar $v_1$ such that $v_1 \mathbf{p}_1 = (\mathbf{q}_1, \mathbf{q}_2, \mathbf{q}_3)$. This means that $(v_1, \mathbf{q}_4) \in \ker ( \mathcal{A}^Y(\mathbf{p}) )$, which is nontrivial because if $v_1 = 0$, then $(\mathbf{q}_1, \mathbf{q}_2, \mathbf{q}_3) = 0$, so $\mathbf{q}_4 \neq 0$. This shows $\mathbf{V}(Y_\mathcal{A}) \supseteq \mathbf{V}(F_\mathcal{A})$, and the desired result follows from Lemma~\ref{lem:vanishingFA}. 

\end{proof}

Observe that $Y_\mathcal{A}$ is generated by polynomials of total degree 3. This fact has an interesting consequence. As we mentioned earlier, $Y_\mathcal{A}$ has been proposed as an alternate algebraic foundation for multi-view geometry. From Lemma~\ref{lem:vanishingYA}, we know that it cuts out the multiview variety. Since $M_\mathcal{A}$ is the vanishing ideal of the multiview variety, we get that $Y_\mathcal{A} \subseteq M_\mathcal{A}$. However, from Theorem~\ref{thm:BA+TA=JA} we know that $M_\mathcal{A} = H^2_\mathcal{A} + H^3_\mathcal{A}$, \ie it is generated by polynomials of degree two and three, which means 
that in general $Y_\mathcal{A} \neq M_\mathcal{A}$ and instead $Y_\mathcal{A} \subset M_\mathcal{A}$ or equivalently $Y_\mathcal{A} \subset H^2_\mathcal{A} + H^3_\mathcal{A} $.  This means that the bifocals and trifocals imply the multiview rank constraints, but not the other way around. Similarly, $H^n_\mathcal{A}$ and  $F_\mathcal{A}$,  which are generated by polynomials of total degree $n$ and four respectively, are 
properly contained in $M_\mathcal{A}$. We see this in Example~\ref{ex:normalized3} below.



\subsection{Relationships to the Multiview Ideal}\
\label{subsec:relationship to MA}
We now compute the three ideals on an example, foreshadowing their 
structural properties, which we examine next. 

\begin{example}
\label{ex:normalized3}
Consider the translational arrangement $\mathcal{A}$ where $\mathbf{t}_1 = (0,0,0)$, $ \mathbf{t}_2  = (1,0,0)$, $\mathbf{t}_3 = (0,1,0)$ whose multiview ideal is: 
\begin{align*}
&M_\mathcal{A} = \langle{y}_{1} {z}_{2}-{y}_{2} {z}_{1},{x}_{2} {z}_{3}-{x}_{3} {z}_{2}+{y}_{2} {z}_{3}-{y}_{3} {z}_{2},\\& {x}_{1} {z}_{3}-{x}_{3} {z}_{1},{x}_{1}
   {x}_{3} {y}_{2}+{x}_{1} {y}_{2} {y}_{3}-{x}_{2} {x}_{3} {y}_{1}-{x}_{3} {y}_{1} {y}_{2}\rangle.
 \end{align*}

The primary decompositions of $H^n_\mathcal{A}$, $F_\mathcal{A}$, and $Y_\mathcal{A}$ are
\begin{align*}
H_\mathcal{A}^n = M_\mathcal{A} &\cap \langle{z}_{1},{y}_{1},{x}_{1}\rangle \cap \langle {z}_{2},{y}_{2},{x}_{2}\rangle \cap  \langle{z}_{3},{y}_{3},{x}_{3}\rangle,  \\
 Y_\mathcal{A} = M_\mathcal{A} &\cap \langle{z}_{1},{y}_{1},{x}_{1}\rangle\cap \langle {y}_{3},{y}_{2},{x}_{3},{x}_{2},{z}_{3}^{2},{z}_{2} {z}_{3},{z}_{2}^{2}\rangle \\ &\cap \langle{z}_{1},{y}_{2},{x}_{3},{x}_{2},{x}_{1},{z}_{3}^{2},{z}_{2} {z}_{3},{z}_{2}^{2}\rangle \\& \cap \langle{z}_{1},{y}_{3},{y}_{2},{y}_{1},{x}_{3},{z}_{3}^{2},{z}_{2} {z}_{3},{z}_{2}^{2}\rangle,
\end{align*}
\begin{align*}
F_\mathcal{A} = M_\mathcal{A} & \cap \langle{y}_{2},{y}_{1},{x}_{2},{x}_{1},{z}_{2}^{2},{z}_{1} {z}_{2},{z}_{1}^{2}\rangle \\& \cap \langle{y}_{3},{y}_{1},{x}_{3},{x}_{1},{z}_{3}^{2},{z}_{1}
       {z}_{3},{z}_{1}^{2}\rangle\\& 
\cap \langle{z}_{2},{z}_{1},{y}_{3},{y}_{2},{y}_{1},{x}_{3},{z}_{3}^{2}\rangle\\ & \cap \langle{z}_{3},{z}_{2},{y}_{1},{x}_{3}+{y}_{3},{x}_{2}+{y}_{2},{x}_{1},{z}_{1}^{2}\rangle \\
&\cap \left \langle {y}_{2},{x}_{2},{z}_{3}^{2},{z}_{2} {z}_{3},{z}_{2}^{2},{y}_{3} {z}_{3},{y}_{3} {z}_{2},{y}_{3}^{2},{x}_{3} {z}_{3},\right. \\
& \qquad \left . {x}_{3} {z}_{2},{x}_{3} {y}_{3},{x}_{3}^{2},{x}_{1}{x}_{3}+{x}_{1} {y}_{3}-{x}_{3} {y}_{1} \right \rangle \\ 
& 
\cap  \left \langle{z}_{3},{y}_{2},{x}_{2},{z}_{2}^{2},{z}_{1} {z}_{2},{z}_{1}^{2},{x}_{3} {z}_{2},{x}_{3}{z}_{1},{x}_{3}^{2},\right.\\
&\qquad \left. {x}_{1} {z}_{2},{x}_{1} {z}_{1},{x}_{1} {y}_{3}-{x}_{3} {y}_{1},{x}_{1} {x}_{3},{x}_{1}^{2}\right \rangle \cap C
\end{align*}
\normalsize{}
where $C$ is a component minimally generated by 133 polynomials of total degree up to eight.

While each of $H^n_\mathcal{A}$, $F_\mathcal{A}$, and $Y_\mathcal{A}$ notably contains $M_\mathcal{A}$ as a component, the nature of their other components is worth further investigation. 
\qed
\end{example}

To analyze the extra components, we rely on several notions from commutative algebra, which we define next. The first notion is that of a \textit{multigraded} ring. Consider the ring $\mathbb{C}[p_1, \dots, p_n]$ endowed with the $\mathbb{Z}^n$-grading $\deg (w_i) = \mathbf{e}_i$ where $w_i \in \{x_i,y_i,z_i\}$ and $\mathbf{e}_i$ is the $i$th standard basis vector in $\mathbb{R}^n$. We say a polynomial in this ring is homogeneous if each of its terms have the same multidegree.

The {\em irrelevant ideal} in this grading, which we denote by $\mathfrak{m}$, is the intersection of the ideals $\mathfrak{m}_i := \langle x_i,y_i,z_i\rangle$:
\begin{align}
\mathfrak{m} := \bigcap_{i= 1}^n \mathfrak{m}_i = \bigcap_{i=1}^n \langle x_i,y_i,z_i\rangle.
\end{align}
Observe that $\mathfrak{m}$ is generated by all multilinear monomials of multidegree $(1,1,\ldots,1)$ and total degree $n$. It is the maximal ideal in the ring $\mathbb{C}[p_1, \dots, p_n]$ generated by homogeneous elements of strictly positive multidegree.  

The \textit{radical} of an ideal $I$ is the ideal $\sqrt{I} := \{f : f^k \in I \text{ for some } k\in \mathbb{N} \}$. 
If $I$ is a homogeneous ideal then so is its radical, and $I \subseteq \sqrt{I}$.  
The \textit{colon} of an ideal $I$ with the ideal $J$, denoted as $(I:J)$ is the set of all polynomials $f$ such that $fg \in I$ for all $g \in J$, \ie, $I:J = \{ f \,:\, fJ \subseteq I \}.$   

Recall that the projective varieties of the ideals $H^n_\mathcal{A}$, $F_\mathcal{A}$, and $Y_\mathcal{A}$ all agree and equal the multiview variety $\mathbf{V}(M_\mathcal{A})$. We can now state a first relationship among the ideals that follows easily from the projective Nullstellensatz in our multigraded setting, whose statement and proof will appear in Appendix A.

\begin{theorem}
\label{thm:radicalsaturated}
For any $\mathcal{A}$ with pairwise distinct foci,
\begin{enumerate}[label=\alph*)]
\item $\sqrt{H^n_\mathcal{A}} : \mathfrak{m} = M_\mathcal{A}$.
\item $\sqrt{F_\mathcal{A}} : \mathfrak{m} = M_\mathcal{A}$.
\item $\sqrt{Y_\mathcal{A}} : \mathfrak{m} = M_\mathcal{A}$ when $A_1 = [I \, \, 0]$.
\end{enumerate}
\end{theorem}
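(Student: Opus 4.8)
The plan is to deduce all three parts at once from two ingredients: the multigraded projective Nullstellensatz of Appendix~A, and the lemmas of this section that identify the projective vanishing loci $\mathbf{V}(H^n_\mathcal{A})$, $\mathbf{V}(F_\mathcal{A})$, $\mathbf{V}(Y_\mathcal{A})$ with the multiview variety $\mathbf{V}(M_\mathcal{A})$. Recall that the multigraded Nullstellensatz will give, for a $\Z^n$-homogeneous ideal $I \subseteq \C[p_1,\dots,p_n]$ and $\mathbf{V}(I) \subseteq (\P^2)^n$, the formula $\mathbf{I}(\mathbf{V}(I)) = \sqrt{I}:\mathfrak{m}^\infty$ with $\mathfrak{m}$ the irrelevant ideal. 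So the first step is to reduce this to a single colon: for any homogeneous $I$ we have $\sqrt{I}:\mathfrak{m}^\infty = \sqrt{I}:\mathfrak{m}$, because $\sqrt{I}$ is an intersection of primes $\bigcap_j \mathfrak{p}_j$, and for a prime $\mathfrak{p}$ one has $\mathfrak{p}:\mathfrak{m}^k = \mathfrak{p}$ for all $k\ge 1$ when $\mathfrak{m}\not\subseteq\mathfrak{p}$ (choose $f\in\mathfrak{m}\minus\mathfrak{p}$; then $gf^k\in\mathfrak{p}$ forces $g\in\mathfrak{p}$), while $\mathfrak{p}:\mathfrak{m}^k = \C[p_1,\dots,p_n]$ when $\mathfrak{m}\subseteq\mathfrak{p}$; in either case the value is independent of $k$, so the saturation chain stabilizes at the first step. (If Appendix~A already states the Nullstellensatz in the one-step form, this remark is unnecessary.) Hence $\sqrt{I}:\mathfrak{m} = \mathbf{I}(\mathbf{V}(I))$ for every homogeneous $I$.

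The second ingredient is the observation that $\mathbf{I}(\mathbf{V}(M_\mathcal{A})) = M_\mathcal{A}$: by Definition~\ref{def:MA} the ideal $M_\mathcal{A}$ is the vanishing ideal of $\varphi_\mathcal{A}(\P_\R^3)$, and $\mathbf{V}(M_\mathcal{A})$ is the Zariski closure of that set, so its vanishing ideal is again $M_\mathcal{A}$ (the vanishing ideal of a set equals that of its Zariski closure). Now combine this with Lemmas~\ref{lem:vanishingHA}, \ref{lem:vanishingFA}, and \ref{lem:vanishingYA}, which give $\mathbf{V}(H^n_\mathcal{A}) = \mathbf{V}(F_\mathcal{A}) = \mathbf{V}(Y_\mathcal{A}) = \mathbf{V}(M_\mathcal{A})$ under the stated hypotheses — distinct foci throughout, and additionally $A_1 = [I\,\,0]$ in the case of $Y_\mathcal{A}$. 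Each part then follows by a short chain of equalities; for (a),
\[
\sqrt{H^n_\mathcal{A}}:\mathfrak{m} = \mathbf{I}\!\left(\mathbf{V}(H^n_\mathcal{A})\right) = \mathbf{I}\!\left(\mathbf{V}(M_\mathcal{A})\right) = M_\mathcal{A},
\]
and (b), (c) are verbatim the same with $F_\mathcal{A}$, respectively $Y_\mathcal{A}$, in place of $H^n_\mathcal{A}$, invoking the corresponding vanishing lemma.

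Since the substantive work has been pushed into Appendix~A (the multigraded Nullstellensatz) and into the vanishing-set lemmas, there is no real obstacle in the theorem itself; the proof is an assembly of these facts. The one point worth flagging is precisely the passage from $\mathfrak{m}^\infty$ to a single $\mathfrak{m}$: it is what licenses the clean statement $\sqrt{I}:\mathfrak{m}$, and it genuinely uses that the radical has been taken first — without radicalizing, a single colon by $\mathfrak{m}$ need not remove $\mathfrak{m}_i$-primary type components, as one sees already in the primary decompositions of $F_\mathcal{A}$ and $Y_\mathcal{A}$ in Example~\ref{ex:normalized3}. I would also make explicit the (routine) fact that $H^n_\mathcal{A}$, $F_\mathcal{A}$, and $Y_\mathcal{A}$ are $\Z^n$-homogeneous, which is immediate from their determinantal, multilinear constructions and is what makes the multigraded Nullstellensatz applicable.
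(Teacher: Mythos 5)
Your proof is correct and takes essentially the same route as the paper: equality of the projective vanishing loci (Lemmas~\ref{lem:vanishingHA}, \ref{lem:vanishingFA}, \ref{lem:vanishingYA}), the multigraded Nullstellensatz of Appendix~A, and an elementary colon manipulation. The only difference is cosmetic: the paper states the Nullstellensatz as $\mathbf{I}_\mathbb{P}(\mathbf{V}_\mathbb{P}(I)) = \sqrt{I}\cap\mathfrak{m}$ (Corollary~\ref{cor:nullstellensatz}), deduces $M_\mathcal{A}\cap\mathfrak{m}=\sqrt{H^n_\mathcal{A}}\cap\mathfrak{m}=\sqrt{F_\mathcal{A}}\cap\mathfrak{m}=\sqrt{Y_\mathcal{A}}\cap\mathfrak{m}$ (Corollary~\ref{cor:same intersection}), and then colons with $\mathfrak{m}$ using $M_\mathcal{A}:\mathfrak{m}=M_\mathcal{A}$ (as in Theorem~\ref{thm:HZsaturated}), which plays exactly the role of your collapse of $\sqrt{I}:\mathfrak{m}^\infty$ to the single colon $\sqrt{I}:\mathfrak{m}$.
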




\begin{proof}
See Appendix A.
\end{proof}

In the language of algebraic geometry what this says is that $\sqrt{H^n_\mathcal{A}}, \sqrt{F_\mathcal{A}}$ and 
$\sqrt{Y_\mathcal{A}}$ all cut out the multiview variety {\em scheme-theoretically}. They are not equal as ideals 
but they agree in high enough multidegree with $M_\mathcal{A}$, see \cite[pp 50]{H95}. 


We now strengthen Theorem~\ref{thm:radicalsaturated} (a) and (b) to show that the operation of taking the radical is not needed, \ie, $H^n_\mathcal{A} \,:\, \mathfrak{m} = M_\mathcal{A}$ and $F_\mathcal{A} \,:\, \mathfrak{m} = M_\mathcal{A}$. 
This means that $H^n_\mathcal{A}$ and $F_\mathcal{A}$ already cut out the multiview variety scheme-theoretically. 
Experimental evidence suggests that when $A_1 = [I\:|\; 0]$, such a result is also true for 
$Y_\mathcal{A}$, but an explicit proof is made difficult by the convoluted structure of the $2\times 2$ minors of $\mathcal{A}^Y(p)$.

We first show that the simple structure of the primary decomposition of $H^n_\mathcal{A}$  observed in 
Example~\ref{ex:normalized3} holds in general.

\begin{lemma}
\label{lem:HZdecomposition}
 For any camera arrangement $\mathcal{A}$ with pairwise distinct foci, $H^n_\mathcal{A} = M_\mathcal{A} \cap \mathfrak{m}$. In particular, $H^n_\mathcal{A}$ is a radical ideal with prime decomposition $M_\mathcal{A} \cap \mathfrak{m}_1 \cap \mathfrak{m}_2 \cap \cdots \cap \mathfrak{m}_n$.
\end{lemma}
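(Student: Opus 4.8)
The goal is to show $H^n_\mathcal{A} = M_\mathcal{A} \cap \mathfrak{m}$, from which the radicality and prime decomposition follow because each $\mathfrak{m}_i$ is prime, $M_\mathcal{A}$ is prime (shown earlier), and $\mathfrak{m} = \bigcap_i \mathfrak{m}_i$. One containment is essentially free: every $n$-focal polynomial is a maximal minor of $\mathcal{A}(p)$, hence lies in $M_\mathcal{A}$ by the discussion preceding Definition~\ref{def:k focals}; and every such minor is multilinear of multidegree at most $(1,\dots,1)$ in a way that, after a short pigeonhole bookkeeping, shows it lies in each $\mathfrak{m}_i$ (a maximal minor that omits every row of camera $i$ vanishes, so any nonzero $n$-focal polynomial uses a row from camera $i$ and is therefore divisible by a variable $w_i \in \{x_i,y_i,z_i\}$ — equivalently, $H^n_\mathcal{A} \subseteq \mathfrak{m}_i$ for each $i$). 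Thus $H^n_\mathcal{A} \subseteq M_\mathcal{A}\cap\bigcap_i\mathfrak{m}_i = M_\mathcal{A}\cap\mathfrak{m}$.

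For the reverse containment $M_\mathcal{A}\cap\mathfrak{m} \subseteq H^n_\mathcal{A}$, the plan is to compare primary/prime decompositions. Since $H^n_\mathcal{A}\subseteq M_\mathcal{A}\cap\mathfrak{m}$ and the right-hand side is radical, it suffices to show: (i) $\sqrt{H^n_\mathcal{A}} = M_\mathcal{A}\cap\mathfrak{m}$, i.e.\ that $H^n_\mathcal{A}$ has no embedded or extraneous components beyond $M_\mathcal{A}$ and the $\mathfrak{m}_i$, and (ii) $H^n_\mathcal{A}$ is itself radical. For (i), one direction is the containment just proved; for the other, $\mathbf{V}(H^n_\mathcal{A}) = \mathbf{V}(M_\mathcal{A})$ in $(\P^2)^n$ by Lemma~\ref{lem:vanishingHA}, but as an ideal in $\mathbb{C}[p_1,\dots,p_n]$ (not yet saturated with respect to $\mathfrak{m}$), $H^n_\mathcal{A}$ can have components supported on the coordinate subspaces $\mathbf{V}(\mathfrak{m}_i)$. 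The claim is that these are exactly the components $\mathfrak{m}_i$: a minimal prime $\mathfrak{p}$ of $H^n_\mathcal{A}$ either contains no $\mathfrak{m}_i$, in which case localizing away from $\mathfrak{m}$ and invoking Lemma~\ref{lem:vanishingHA} (via the multigraded Nullstellensatz of Appendix~A) forces $\mathfrak{p} = M_\mathcal{A}$; or $\mathfrak{p}\supseteq\mathfrak{m}_i$ for some $i$, and then minimality together with $H^n_\mathcal{A}\subseteq\mathfrak{m}_i$ forces $\mathfrak{p}=\mathfrak{m}_i$ (here one uses that $\mathfrak{m}_i$ is a minimal prime of $H^n_\mathcal{A}$, which in turn follows by checking that $H^n_\mathcal{A}$ localized at $\mathfrak{m}_i$ is $\mathfrak{m}_i$-primary — or more simply, by exhibiting that no product $\mathfrak{m}_i\mathfrak{m}_j$ with $j\ne i$ and no $M_\mathcal{A}\cdot\mathfrak{m}_j$ can be "absorbed", which is where the distinct-foci hypothesis enters through Corollary~\ref{cor:limitpoints}).

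Step (ii), radicality, is the part I expect to be the main obstacle, since it is not formal: it does not follow merely from knowing the minimal primes. The cleanest route is to prove $H^n_\mathcal{A} = M_\mathcal{A}\cap\mathfrak{m}$ directly as an ideal identity, bypassing (ii). Here is the approach I would push hardest. By Lemma~\ref{lem:world coordinate change} and Lemma~\ref{lem:kfocal ideals map}, both $H^n_\mathcal{A}$ and $M_\mathcal{A}\cap\mathfrak{m}$ transform equivariantly under the $\mathrm{GL}_4$ and $(\mathrm{GL}_3)^n$ actions (note $\mathfrak{m}$ is fixed by $\chi_\mathcal{G}$ since each $\mathfrak{m}_i$ is), so by Lemma~\ref{lem:minor-generic} we may assume $\mathcal{A}$ is minor-generic. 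Then by Corollary~\ref{cor:minorgeneric gives BA+TA} and Corollary~\ref{cor:Q inside T for general cameras}, $M_\mathcal{A} = H^2_\mathcal{A}+H^3_\mathcal{A} = H^2_\mathcal{A}+H^3_\mathcal{A}+H^4_\mathcal{A}$, and moreover the bifocals, trifocals and quadrifocals form a universal Gröbner basis of $M_\mathcal{A}$. Using Lemma~\ref{lem:bumping}, every generator of $M_\mathcal{A}$ times a suitable monomial of multidegree $(1,\dots,1)$ supported off its "active" cameras becomes an $n$-focal polynomial, so $\mathfrak{m}\cdot M_\mathcal{A} \subseteq H^n_\mathcal{A}$; combined with the reverse, $\mathfrak{m}\cdot M_\mathcal{A}\subseteq H^n_\mathcal{A}\subseteq M_\mathcal{A}\cap\mathfrak{m}$. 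To upgrade $\mathfrak{m}\cdot M_\mathcal{A}$ to $M_\mathcal{A}\cap\mathfrak{m}$ inside $H^n_\mathcal{A}$, the key computation is that $M_\mathcal{A}\cap\mathfrak{m} = \mathfrak{m}\cdot M_\mathcal{A} + (\text{lower multidegree corrections})$; I expect this to be verifiable by a multigraded Hilbert-function count (the two sides have the same multigraded Hilbert function because $M_\mathcal{A}$ is prime of the "expected" Cartwright–Sturmfels type and $\mathfrak{m}$ is a complete intersection in each block), reducing the equality $H^n_\mathcal{A} = M_\mathcal{A}\cap\mathfrak{m}$ to a finite check in low multidegree — which, consistent with the paper's stated methodology, is exactly the kind of statement one verifies in Macaulay2 for $n=2,3,4$ and then extends to general $n$ via the ideal-sum decompositions $H^n_\mathcal{A} = \sum_\sigma H^{|\sigma|}_{\mathcal{A}_\sigma}$ together with the bumping lemma. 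The delicate point throughout is controlling the $\mathfrak{m}_i$-primary components of $H^n_\mathcal{A}$ so that they are genuinely radical (just $\mathfrak{m}_i$, not some thickening), and this is precisely where distinctness of foci is indispensable, as Example~\ref{ex:coincident foci2} shows.
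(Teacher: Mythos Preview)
Your forward containment $H^n_\mathcal{A}\subseteq M_\mathcal{A}\cap\mathfrak{m}$ is fine and matches the paper. The reverse containment is where you overshoot: you correctly identify the ingredients (Theorem~\ref{thm:BA+TA=JA} and Lemma~\ref{lem:bumping}) but then propose to bridge $\mathfrak{m}\cdot M_\mathcal{A}\subseteq H^n_\mathcal{A}$ up to $M_\mathcal{A}\cap\mathfrak{m}\subseteq H^n_\mathcal{A}$ via Hilbert-function counts, Cartwright--Sturmfels structure, and low-degree Macaulay2 checks. None of that is needed, and the detour through radicality and minimal-prime analysis is circular (radicality is exactly what you are trying to conclude).

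The paper's argument is a one-line combination of the same ingredients. Take $f\in M_\mathcal{A}\cap\mathfrak{m}$; since both ideals are $\mathbb{Z}^n$-graded you may assume $f$ is multihomogeneous of some multidegree $d\ge(1,\dots,1)$. By Theorem~\ref{thm:BA+TA=JA} write $f=\sum_i r_ib_i+\sum_j s_jt_j$ with $b_i$ bifocals, $t_j$ trifocals, and $r_i,s_j$ monomials; because the generators are themselves multihomogeneous you may discard all summands not of multidegree $d$, so each $r_ib_i$ and $s_jt_j$ individually has multidegree $d\ge(1,\dots,1)$. Now a bifocal $b_i$ involves only two cameras, so the monomial $r_i$ must contain at least one variable from each of the remaining $n-2$ cameras; by Lemma~\ref{lem:bumping} this makes $r_ib_i$ a monomial multiple of an $n$-focal, hence in $H^n_\mathcal{A}$. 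The same works for $s_jt_j$. Thus $f\in H^n_\mathcal{A}$ directly. The point you missed is that multihomogeneity lets you assume each summand, not just their total, lies in $\mathfrak{m}$, so no ``upgrade'' from $\mathfrak{m}\cdot M_\mathcal{A}$ is required. (Indeed $\mathfrak{m}\cdot M_\mathcal{A}\subsetneq H^n_\mathcal{A}$ in general: the $n$-focal $w_3\cdots w_n\, b_{12}$ has multidegree exactly $(1,\dots,1)$ and cannot be written as a product of a nonconstant element of $\mathfrak{m}$ with an element of $M_\mathcal{A}$.) The reduction to minor-generic $\mathcal{A}$ is also unnecessary, since Theorem~\ref{thm:BA+TA=JA} already holds under the distinct-foci hypothesis.
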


\begin{proof}
Suppose $f$ is a generator of $\in H^n_\mathcal{A}$, \ie, a maximal minor of $\mathcal{A}(p)$.  Then $f \in \mathfrak{m}$. Also, since $f$ vanishes on $\mathbf{V}(M_\mathcal{A})$, $f \in M_\mathcal{A}$. Therefore, 
 $H^n_\mathcal{A} \subseteq M_\mathcal{A} \cap \mathfrak{m}$.
 
 Now suppose $f \in M_\mathcal{A} \cap \mathfrak{m}$. Since $M_\mathcal{A}$ is generated by bifocals and trifocals 
$f = \sum \lambda_i r_i b_i + \sum \mu_j s_j t_j$ where $b_i$'s are bifocals, $t_j$'s are trifocals, $r_i, s_j$ are monomials,  and $\lambda_i, \mu_j$ are scalars. Further, since $f \in \mathfrak{m}$, every term in $f$ is divisible by some generator 
$\prod_{i=1}^n w_i$ of $\mathfrak{m}$ where $w_i \in \{x_i, y_i, z_i\}$. Now consider $r_ib_i$. Since $b_i$ involves only two cameras, it must be that $r_i$ contains a variable $w_i$ from each of the other $n-2$ cameras so that each term of $r_ib_i$ lies in $\mathfrak{m}$. This makes 
$r_ib_i$ a monomial multiple of a $n$-focal by Lemma~\ref{lem:bumping}. The same argument holds for $s_jt_j$. 
Thus, $f \in H^n_\mathcal{A}$. 
\end{proof}

Proposition b3 in \cite{THPsupp15} proves that when $\mathcal{A}$ is minor-generic, $H^n_\mathcal{A}$ is a radical ideal. Lemma~\ref{lem:HZdecomposition} shows that $H^n_\mathcal{A}$ is always a radical ideal under the weaker assumption of distinct foci.

\begin{theorem}
\label{thm:HZsaturated}
For any camera arrangement $\mathcal{A}$ with pairwise distinct foci,  $H^n_\mathcal{A} : \mathfrak{m} = M_\mathcal{A}$.
\end{theorem}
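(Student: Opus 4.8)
The statement to prove is $H^n_\mathcal{A} : \mathfrak{m} = M_\mathcal{A}$ for any arrangement $\mathcal{A}$ with pairwise distinct foci. The plan is to leverage Lemma~\ref{lem:HZdecomposition}, which already gives the primary (indeed prime) decomposition $H^n_\mathcal{A} = M_\mathcal{A} \cap \mathfrak{m}_1 \cap \cdots \cap \mathfrak{m}_n$, where $\mathfrak{m}_i = \langle x_i, y_i, z_i \rangle$. Since colon distributes over intersection, $H^n_\mathcal{A} : \mathfrak{m} = (M_\mathcal{A} : \mathfrak{m}) \cap \bigcap_{i=1}^n (\mathfrak{m}_i : \mathfrak{m})$. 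So the proof reduces to two computations: (i) $M_\mathcal{A} : \mathfrak{m} = M_\mathcal{A}$, and (ii) $\mathfrak{m}_i : \mathfrak{m} = \mathbb{C}[p_1, \dots, p_n]$ for each $i$, the whole ring.

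For (i), I would argue that $M_\mathcal{A}$ is prime (established earlier in Section~\ref{sec:multiview ideal}) and that $\mathfrak{m} \not\subseteq M_\mathcal{A}$; then $(M_\mathcal{A} : \mathfrak{m}) = M_\mathcal{A}$ is a standard fact about primes (if $P$ is prime and $J \not\subseteq P$, then $P:J = P$, since $fJ \subseteq P$ with some $g \in J \setminus P$ forces $f \in P$). The only thing to check is $\mathfrak{m} \not\subseteq M_\mathcal{A}$: the multilinear monomial $x_1 x_2 \cdots x_n$ generates part of $\mathfrak{m}$ and does not vanish on all of $\mathbf{V}(M_\mathcal{A})$ — for instance, it is nonzero at the image of a generic world point, since $\varphi_\mathcal{A}(\P^3)$ is not contained in any coordinate hyperplane $\{x_i = 0\}$ when $\mathcal{A}$ consists of rank-three cameras (a generic $\mathbf{q}$ has $x$-coordinate of $A_i \mathbf{q}$ nonzero). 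For (ii), note $\mathfrak{m} \subseteq \mathfrak{m}_i$ (indeed $\mathfrak{m} = \bigcap_j \mathfrak{m}_j \subseteq \mathfrak{m}_i$), so every generator $\prod_j w_j$ of $\mathfrak{m}$ lies in $\mathfrak{m}_i$ because its factor $w_i$ does; hence $1 \cdot \mathfrak{m} \subseteq \mathfrak{m}_i$, giving $1 \in (\mathfrak{m}_i : \mathfrak{m})$, so $\mathfrak{m}_i : \mathfrak{m}$ is the unit ideal.

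Combining, $H^n_\mathcal{A} : \mathfrak{m} = M_\mathcal{A} \cap \mathbb{C}[p_1,\dots,p_n] \cap \cdots \cap \mathbb{C}[p_1,\dots,p_n] = M_\mathcal{A}$, as desired. An alternative, perhaps cleaner phrasing: directly from $H^n_\mathcal{A} = M_\mathcal{A} \cap \mathfrak{m}$ one gets $H^n_\mathcal{A} : \mathfrak{m} = (M_\mathcal{A} \cap \mathfrak{m}) : \mathfrak{m} = (M_\mathcal{A} : \mathfrak{m}) \cap (\mathfrak{m} : \mathfrak{m}) = M_\mathcal{A} \cap \mathbb{C}[p_1,\dots,p_n] = M_\mathcal{A}$, using $\mathfrak{m}:\mathfrak{m} = \mathbb{C}[p_1,\dots,p_n]$ trivially and $M_\mathcal{A}:\mathfrak{m}=M_\mathcal{A}$ from primeness.

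\textbf{Main obstacle.} There is no serious obstacle here — Lemma~\ref{lem:HZdecomposition} does all the heavy lifting, and what remains is the formal manipulation of colon ideals plus the easy observation that $\mathfrak{m} \not\subseteq M_\mathcal{A}$. The one point requiring a sentence of care is precisely that last observation: one must confirm that the multiview variety is not contained in any coordinate hyperplane, which follows because the cameras $A_i$ have rank three, so no row of $A_i$ is identically zero and a generic world point maps to a point with all coordinates nonzero in every image plane. Beyond that, the argument is essentially a two-line consequence of the already-established prime decomposition of $H^n_\mathcal{A}$.
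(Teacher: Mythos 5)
Your proposal is correct and follows essentially the same route as the paper: both rest on Lemma~\ref{lem:HZdecomposition} ($H^n_\mathcal{A} = M_\mathcal{A} \cap \mathfrak{m}$) together with $M_\mathcal{A} : \mathfrak{m} = M_\mathcal{A}$, which the paper also derives from primeness of $M_\mathcal{A}$ and the fact that it contains no monomials. Your explicit check that $\varphi_\mathcal{A}(\P^3)$ lies in no coordinate hyperplane is a slightly more detailed justification of that last point, but not a different argument.
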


\begin{proof}
We first note that $M_\mathcal{A} :  \mathfrak{m} = M_\mathcal{A}$. Suppose $f \in M_\mathcal{A} : \mathfrak{m}$. 
Then $f u \in M_\mathcal{A}$ for any monomial generator $u$ of $\mathfrak{m}$. Since $M_\mathcal{A}$ is prime and does not contain any monomials, $f \in M_\mathcal{A}$. Since $H^n_\mathcal{A} = M_\mathcal{A} \cap \mathfrak{m}$ by Lemma~\ref{lem:HZdecomposition}, $H^n_\mathcal{A} : \mathfrak{m} = M_\mathcal{A} : \mathfrak{m} = M_\mathcal{A}$.
\end{proof}

We now consider the Faugeras ideal $F_\mathcal{A}$ and prove that $F_\mathcal{A} : \mathfrak{m} = M_\mathcal{A}$. The nontrivial part is to argue that $M_\mathcal{A}$ is contained in $F_\mathcal{A} : \mathfrak{m}$. This fact relies on the following technical lemma, similar in flavor to Lemma~\ref{lem:bumping}, which shows that bifocals and trifocals can both be multiplied by any generator of $\mathfrak{m}$ to fall into $F_\mathcal{A}$.  

\begin{lemma}
\label{lem:faugerasbumping}
\begin{enumerate}[label=\alph*)]
\item For $n=2$ cameras, and any monomial $p_{1j}p_{2k}$, there exists a $4 \times 4$ minor $f$ of $\mathcal{A}^F(p)$ such that $f = (-1)^{j+k} p_{1j}p_{2k} \det \mathcal{A}(p)$.
\item Let $n=3$ and $i_1,i_2,i_3$ be pairwise distinct. Then for any trifocal $ \det \mathcal{A}(p)_{\{p_{i_1j_1}p_{i_2j_2} \}}$ and any coordinate $p_{i_3k}$, there exists 
a $4 \times 4$ minor $f$ of $\mathcal{A}^F(p)$ such that 
$f = (-1)^k p_{i_3k} \det \mathcal{A}(p)_{\{p_{i_1j_1}p_{i_2j_2} \}}$.
\end{enumerate}
\end{lemma}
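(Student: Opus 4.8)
The plan is to prove both parts by an explicit determinantal identity, modeled closely on the proof of Lemma~\ref{lem:bumping} and on the decomposition $\mathcal{A}^F(p) = P(p)\,\mathcal{A}$ from \eqref{eq:AFdecomposition}. The point is that a $4\times 4$ minor of $\mathcal{A}^F(p)$ is obtained by selecting $4$ of the $3n$ rows of $P(p)\mathcal{A}$, and each such row is a fixed $\mathbb{C}$-linear combination (with coefficients among $\pm p_{i\ell}$) of the three rows of $A_i$. So every $4\times 4$ minor of $\mathcal{A}^F(p)$ expands, by multilinearity in the rows (equivalently Cauchy--Binet, Lemma~\ref{lem:Cauchy Binet}), into a $\mathbb{C}[p]$-combination of $4\times 4$ minors of $\mathcal{A}$ — and the content of the lemma is that for a judicious choice of the $4$ rows one gets a single clean term equal to a monomial times a $k$-focal.

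For part (a), with $n=2$: the $6\times 4$ matrix $\mathcal{A}^F(p)$ has rows $[p_1]_\times A_1$ stacked over $[p_2]_\times A_2$, while $\det\mathcal{A}(p)$ is the full $6\times 6$ determinant of $[\,A_1\ p_1\ 0\,;\ A_2\ 0\ p_2\,]$. First I would Laplace-expand $\det\mathcal{A}(p)$ along its last two columns (the $p_1,p_2$ columns), which picks out the row $j$ from the first camera block and row $k$ from the second camera block to delete; the cofactor is $\pm\, p_{1j} p_{2k}$ times the $4\times 4$ minor of $\mathcal{A}=[A_1;A_2]$ with rows $\{1,2,3\}\setminus\{j\}$ from $A_1$ and $\{1,2,3\}\setminus\{k\}$ from $A_2$. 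Separately, I would compute the $4\times 4$ minor $f$ of $\mathcal{A}^F(p)$ built from two rows of $[p_1]_\times A_1$ and two rows of $[p_2]_\times A_2$, using the fact that the $2\times 3$ matrix formed by the rows of $[p_i]_\times$ indexed by $\{1,2,3\}\setminus\{j\}$ equals (up to sign $(-1)^{j}$ and permutation) the cross-product row $p_i$ expressed in the standard basis, so that taking those two rows times $A_i$ reconstructs the deleted-row minor of $A_i$ scaled by $\pm p_{ij}$. Matching signs, $f = (-1)^{j+k} p_{1j} p_{2k}\det\mathcal{A}(p)$; the sign bookkeeping is the only delicate part and I would fix conventions once (e.g. which two rows of $[p_i]_\times$ are kept, and orient the Laplace expansion accordingly) and verify the $n=2$ case symbolically in Macaulay2.

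For part (b), with $n=3$: here $\det\mathcal{A}(p)_{\{p_{i_1 j_1} p_{i_2 j_2}\}}$ is a $7\times 7$ minor of $\mathcal{A}(p)$ that keeps columns $p_{i_1}$ and $p_{i_2}$ but not $p_{i_3}$, so camera $i_3$ contributes all three of its rows. I would again Laplace-expand along the two kept $p$-columns to write it as $\pm p_{i_1 j_1} p_{i_2 j_2}$ times the $5\times 5$ minor of $[A_{i_1}; A_{i_2}; A_{i_3}]$ that keeps rows $\{1,2,3\}\setminus\{j_1\}$ of $A_{i_1}$, rows $\{1,2,3\}\setminus\{j_2\}$ of $A_{i_2}$, and all of $A_{i_3}$; then take the $4\times 4$ minor $f$ of $\mathcal{A}^F(p)$ using two rows of $[p_{i_1}]_\times A_{i_1}$, two rows of $[p_{i_2}]_\times A_{i_2}$, and zero rows of the third block — wait, that is only $4$ rows but uses no row tied to $i_3$ and the monomial $p_{i_3 k}$ must appear. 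Instead $f$ should use one row of $[p_{i_3}]_\times A_{i_3}$ (the $k$-th, contributing the factor involving $p_{i_3 k}$), together with a suitable choice of three rows among $[p_{i_1}]_\times A_{i_1}$ and $[p_{i_2}]_\times A_{i_2}$ — say two from $i_1$ and one from $i_2$, or the symmetric split — so that expanding by multilinearity reproduces exactly the trifocal $\det\mathcal{A}(p)_{\{p_{i_1 j_1} p_{i_2 j_2}\}}$ with the stated sign $(-1)^k$ and no extra terms. I expect the main obstacle to be precisely this: identifying the right row selection and splitting so that the Cauchy--Binet expansion collapses to a single nonzero term rather than a sum, and then nailing the signs; this is where I would lean on an explicit symbolic check for $n=3$ to confirm the identity and the sign, and only then record the general-position argument (the identity is purely formal in the entries of the $A_i$'s and $p_i$'s, so it holds for all camera arrangements once verified symbolically).
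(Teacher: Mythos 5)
Your overall strategy is the same as the paper's Appendix B argument: use $\mathcal{A}^F(p)=P(p)\mathcal{A}$, expand a well-chosen $4\times 4$ minor by Cauchy--Binet, and recognize the resulting sum as a monomial times the Laplace expansion of $\det\mathcal{A}(p)$ (resp.\ of the trifocal) along its $p$-columns. For part (a) your row selection (drop row $j$ from the $[p_1]_\times A_1$ block and row $k$ from the $[p_2]_\times A_2$ block) is exactly the paper's choice, and a symbolic check with fully symbolic camera entries would legitimately close the sign bookkeeping; the only caveat is your description of the mechanism: the expansion does not ``pick out'' the single deleted-row minor of $\mathcal{A}$ scaled by $\pm p_{1j}p_{2k}$. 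The $2\times 2$ minors of the two kept rows of $[p_i]_\times$ are $(-1)^{j+l}p_{ij}p_{il}$ for \emph{all} $l$, so Cauchy--Binet produces the full nine-term sum, which is $(-1)^{j+k}p_{1j}p_{2k}$ times the complete Laplace expansion of $\det\mathcal{A}(p)$ along its last two columns.

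Part (b) has a genuine gap: your proposed row split is inverted, and no choice within it can work. A $4\times 4$ minor of $\mathcal{A}^F(p)$ that uses $r_1,r_2,r_3$ rows from the blocks $[p_{i_1}]_\times A_{i_1}$, $[p_{i_2}]_\times A_{i_2}$, $[p_{i_3}]_\times A_{i_3}$ is multihomogeneous of multidegree $(r_1,r_2,r_3)$ in $(p_{i_1},p_{i_2},p_{i_3})$, whereas the target $(-1)^k p_{i_3k}\det\bigl(\mathcal{A}(p)_{\{p_{i_1j_1}p_{i_2j_2}\}}\bigr)$ has multidegree $(1,1,2)$. So with your split (one row from the $i_3$ block, three rows from the $i_1,i_2$ blocks, multidegree $(2,1,1)$ or $(1,2,1)$) the identity is impossible; what such a minor actually yields is a coordinate of camera $i_1$ (or $i_2$) times a trifocal whose deleted rows come from the \emph{other} two cameras. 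The correct choice, as in the paper, is the opposite: keep \emph{two} rows from the $i_3$ block, namely the two rows of $[p_{i_3}]_\times$ complementary to row $k$, whose $2\times 2$ minors $(-1)^{k+c}p_{i_3k}p_{i_3c}$ supply the factor $p_{i_3k}$, and keep exactly \emph{one} row from each of the $i_1$ and $i_2$ blocks (the rows not containing $p_{i_1j_1}$, $p_{i_2j_2}$). Relatedly, your stated success criterion --- that the Cauchy--Binet expansion should ``collapse to a single nonzero term'' --- is a misconception even for the correct selection: the expansion remains a sum over column choices, and the proof consists in recognizing that sum, after pulling out the common monomial, as the Laplace expansion of the trifocal along its three $p$-columns. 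Hunting for a single-term collapse in your symbolic experiments would never terminate successfully.
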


\begin{proof}
See Appendix B both for the notation and the proof.
\end{proof}

\begin{theorem}
\label{thm:FAsaturated}
For any camera arrangement $\mathcal{A}$ with pairwise distinct foci, $F_\mathcal{A} : \mathfrak{m} = M_\mathcal{A}$. 
\end{theorem}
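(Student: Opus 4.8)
The strategy is to establish the two containments $F_\mathcal{A} : \mathfrak{m} \subseteq M_\mathcal{A}$ and $M_\mathcal{A} \subseteq F_\mathcal{A} : \mathfrak{m}$ separately. The first is the easy direction and mimics the argument in Theorem~\ref{thm:HZsaturated}: since $F_\mathcal{A} \subseteq M_\mathcal{A}$ (the Faugeras minors vanish on the multiview variety) and $M_\mathcal{A}$ is prime and contains no monomials, we get $F_\mathcal{A} : \mathfrak{m} \subseteq M_\mathcal{A} : \mathfrak{m} = M_\mathcal{A}$, where the last equality is exactly the first line of the proof of Theorem~\ref{thm:HZsaturated}. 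So the whole content is in showing $M_\mathcal{A} \subseteq F_\mathcal{A} : \mathfrak{m}$, i.e.\ that $f \cdot u \in F_\mathcal{A}$ for every $f \in M_\mathcal{A}$ and every monomial generator $u = \prod_{i=1}^n w_i$ of $\mathfrak{m}$ (with $w_i \in \{x_i,y_i,z_i\}$).

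By Theorem~\ref{thm:BA+TA=JA}, $M_\mathcal{A} = H^2_\mathcal{A} + H^3_\mathcal{A}$, so it suffices to show that $u$ times any bifocal and $u$ times any trifocal lies in $F_\mathcal{A}$; by linearity this reduces to showing $r \cdot b \cdot u' \in F_\mathcal{A}$ for generators, but in fact it is cleaner to argue directly that $u$ times a single bifocal (resp.\ trifocal) generator is in $F_\mathcal{A}$. Take a bifocal $b = \det\mathcal{A}_{\{i_1,i_2\}}(p)$ involving cameras $i_1, i_2$. By Lemma~\ref{lem:faugerasbumping}(a), applied to the two-camera subarrangement $(A_{i_1},A_{i_2})$, for the chosen coordinates $w_{i_1}, w_{i_2}$ there is a $4\times 4$ minor $g$ of $\mathcal{A}^F(p)$ (using only rows from cameras $i_1,i_2$) with $g = \pm\, w_{i_1} w_{i_2}\, b$. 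Now multiply by the remaining monomial $\prod_{i \ne i_1,i_2} w_i$: this is a monomial multiple of a generator of $F_\mathcal{A}$, hence lies in $F_\mathcal{A}$, and it equals $\pm\, u \cdot b$. The same works for a trifocal $t = \det\mathcal{A}(p)_{\{p_{i_1j_1}p_{i_2j_2}\}}$ relating cameras $i_1,i_2,i_3$: Lemma~\ref{lem:faugerasbumping}(b) gives a $4\times4$ minor of $\mathcal{A}^F(p)$ equal to $\pm\, w_{i_3}\, t$; multiplying by $w_{i_1} w_{i_2}$ and then by $\prod_{i\ne i_1,i_2,i_3} w_i$ (again a monomial times an element of $F_\mathcal{A}$, hence in $F_\mathcal{A}$) yields $\pm\, u \cdot t$. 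Wait --- one must be slightly careful: multiplying the $4\times4$ minor $g$ of $\mathcal{A}^F(p)$ by the extra variables $w_i$ does not itself have to be a minor of $\mathcal{A}^F(p)$, but it is a polynomial multiple of $g \in F_\mathcal{A}$, which is all we need since $F_\mathcal{A}$ is an ideal. Summing over the expansion $f = \sum \lambda_i r_i b_i + \sum \mu_j s_j t_j$ of an arbitrary $f\in M_\mathcal{A}$, and noting each $r_i b_i u$ and $s_j t_j u$ is in $F_\mathcal{A}$ by the above (the monomials $r_i, s_j$ only help), we conclude $fu \in F_\mathcal{A}$, hence $f \in F_\mathcal{A} : \mathfrak{m}$.

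The main obstacle is entirely offloaded into Lemma~\ref{lem:faugerasbumping}: one needs the precise statement that a single bifocal (resp.\ trifocal) can be realized, up to sign and up to the two (resp.\ one) coordinate factors naturally attached to its cameras, as an honest $4\times 4$ minor of $\mathcal{A}^F(p)$ --- this is the ``Faugeras analogue'' of the bumping Lemma~\ref{lem:bumping}, and its proof (deferred to Appendix~B) is the computational heart, requiring a careful choice of which three rows of each relevant $[p_i]_\times A_i$ block to select and a Cauchy--Binet / cofactor-expansion bookkeeping to identify the resulting $4\times4$ minor of $\mathcal{A}^F(p) = P(p)\mathcal{A}$ with the corresponding maximal minor of $\mathcal{A}(p)$ scaled by the appropriate coordinates. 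Once that lemma is in hand, the theorem follows by the purely formal ideal-membership argument above.
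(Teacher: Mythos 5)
Your overall route is the same as the paper's: the easy containment $F_\mathcal{A}:\mathfrak{m}\subseteq M_\mathcal{A}$ via $F_\mathcal{A}\subseteq M_\mathcal{A}$ and $M_\mathcal{A}:\mathfrak{m}=M_\mathcal{A}$, and the reverse containment by reducing to $H^2_\mathcal{A},H^3_\mathcal{A}\subseteq F_\mathcal{A}:\mathfrak{m}$ through Theorem~\ref{thm:BA+TA=JA} and the ``Faugeras bumping'' Lemma~\ref{lem:faugerasbumping}, with the correct observation that a minor of $\mathcal{A}^F(p)$ involving only the rows of two (or three) cameras is a genuine generator of $F_\mathcal{A}$, and that multiplying it by the leftover coordinates keeps you inside the ideal.

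There is, however, one case you silently skip. A trifocal generator of $H^3_\mathcal{A}$ is a maximal minor of the $9\times 7$ matrix $\mathcal{A}_{\{i,j,k\}}(p)$ obtained by deleting \emph{any} two of its nine rows, and those two rows may come from the \emph{same} camera. Such a trifocal is not of the form $\det\bigl(\mathcal{A}(p)_{\{p_{i_1j_1}p_{i_2j_2}\}}\bigr)$ with $i_1\neq i_2$, so Lemma~\ref{lem:faugerasbumping}(b), which requires $i_1,i_2,i_3$ pairwise distinct, does not apply to it, and your sentence ``the same works for a trifocal'' only covers the generators whose two deleted rows lie in different cameras. The paper handles this explicitly as a separate case: if both deleted rows come from camera $i$, then expanding along the $p_i$-column shows the trifocal factors as $w_i\, b_{jk}$ for a coordinate $w_i$ and the bifocal $b_{jk}$ of the other two cameras, so $u\cdot t = w_i\,(u\cdot b_{jk})\in F_\mathcal{A}$ by part (a) of the lemma and the ideal property. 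With that one observation added, your argument is complete and coincides with the paper's proof; the heavy lifting is indeed entirely in Lemma~\ref{lem:faugerasbumping}, proved in Appendix~B via Cauchy--Binet applied to $\mathcal{A}^F(p)=P(p)\mathcal{A}$.
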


\begin{proof}
The containment $F_\mathcal{A} : \mathfrak{m} \subseteq M_\mathcal{A}$ follows as in Theorem~\ref{thm:HZsaturated} because $F_\mathcal{A} \subseteq M_\mathcal{A}$ and hence, $F_\mathcal{A} : \mathfrak{m} \subseteq 
M_\mathcal{A} : \mathfrak{m} = M_\mathcal{A}$. The other containment will follow by showing $H^2_\mathcal{A}, H^3_\mathcal{A} \subseteq F_\mathcal{A} : \mathfrak{m}$. For general camera arrangements with $n$ cameras, recall that $F^2_\mathcal{A}$ (resp. $F^3_\mathcal{A}$) is the ideal generated by all $4 \times 4$ minors of $\mathcal{A}^F(p)$ that involve only two (resp. three) cameras. By Lemma~\ref{lem:faugerasbumping}(a), for any multilinear monomial $(\prod_{m=1}^n w_m)$ and any bifocal $b_{ij}$, $( \prod w_m ) b_{ij} \in (f)$ for some Faugeras minor $f \in F^2_\mathcal{A}$, hence $H^2_\mathcal{A} \subseteq F_\mathcal{A} : \mathfrak{m}$. We address the trifocals in two cases. First consider the case when the two rows eliminated from $\mathcal{A}_{\{i,j,k\}}(p)$ to form a trifocal $t \in H^3_{\{i,j,k\}}$ come from the same camera, say without loss of generality, from camera $i$. In this case, $t = w_i b_{jk}$ for some $w_i$, and Lemma~\ref{lem:faugerasbumping}(a) again implies $t \in F_\mathcal{A} : \mathfrak{m}$. For the case when the two rows from $\mathcal{A}_{\{i,j,k\}}(p)$ to form $t \in  H^3_{\{i,j,k\}}$ come from different cameras, Lemma~\ref{lem:faugerasbumping}(b) implies that, for any $(\prod w_m)$, $(\prod w_m)t\in (f)$ for some $f \in F^3_\mathcal{A}$. We conclude that $H^3_\mathcal{A} \subseteq F_\mathcal{A} : \mathfrak{m}$, as desired.
\end{proof}

\section{The Bifocal Ideal}
\label{sec:bifocal ideal}
We saw in Theorem~\ref{thm:BA+TA=JA} that the bifocals and trifocals together generate the multiview ideal when the camera foci are pairwise distinct. 
In this section, we investigate how imposing further conditions on the cameras can lead to an even simpler description of the multiview ideal. Heyden and \AA str\"{o}m~\cite{HA97} and Trager \etal~\cite{THP15} show that when the camera foci are not all on a plane, the bifocals are necessary and sufficient to cut out the multiview variety. There has also been work to further reduce this description by considering the minimal number of bifocals needed (\cite{HA97},~\cite{TOP18}), though we will not address this question here. 
In this section, we focus on the ideal-theoretic relationship between the bifocal ideal $H^2_\mathcal{A}$ and the multiview ideal $M_\mathcal{A}$ when the camera foci are noncoplanar. 

To motivate our investigation, we start with some examples. We say that a camera arrangement $\mathcal{A}$ is coplanar, noncoplanar or collinear if their foci have the corresponding property.

\begin{example}
\label{ex:noncoplanar4}
Consider the four noncoplanar  translational camera arrangement $\mathcal{A}_1$ where $\mathbf{t}_1 = (0,0,0)$, $ \mathbf{t}_2  = (1,0,0)$, $\mathbf{t}_3 = (0,1,0)$, $\mathbf{t}_4 = (0,0,1)$. Eliminating the variables $q$ and $\la_i$ from the ideal $\<A_i q - \la_i p_i : i = 1, \dots, n\>$, we observe $M_{\mathcal{A}_1}$ occurs as a component in $H^2_{\mathcal{A}_1}$
\begin{align*}
&H^2_{\mathcal{A}_1} = M_{\mathcal{A}_1}  \cap  \langle x_2,y_2,z_2, x_1,x_3,x_4\rangle \\
& \cap \langle x_1,y_1,z_1, x_2 + y_2 + z_2, x_3 + y_3 + z_3,x_4 + y_4 + z_4\rangle    \\
& \cap \langle x_3, y_3,z_3, y_1,y_2,y_4\rangle \cap \langle x_4,y_4,z_4,z_1,z_2,z_3\rangle.
\end{align*}
\end{example}

\begin{example}
\label{ex:coplanar4}
Consider the four coplanar translational camera arrangement $\mathcal{A}_2$ where $\mathbf{t}_1 = (1,0,0)$, $ \mathbf{t}_2  = (0,1,0)$, $\mathbf{t}_3 = (0,0,1)$, $\mathbf{t}_4 = (1/3,1/3,1/3)$. We observe that $H_{\mathcal{A}_2}^2 = M_{\mathcal{A}_2} \cap C$ where
\[
C = \langle{x}_{4}+{y}_{4}+{z}_{4},{x}_{3}+{y}_{3}+{z}_{3},{x}_{2}+{y}_{2}+{z}_{2},{x}_{1}+{y}_{1}+{z}_{1}\rangle.
\]
\end{example}

In Example~\ref{ex:noncoplanar4}, each extra component of $H^2_{\mathcal{A}_1}$ contains an irrelevant ideal $\mathfrak{m}_i$ and hence does not contribute to $\mathbf{V}(H^2_{\mathcal{A}_1})$. Saturating the bifocal ideal $H^2_{\mathcal{A}_1}$ with respect to the full irrelevant ideal $\mathfrak{m}$ removes these components. We will prove that this is always true when camera foci are noncoplanar. We begin by proving a series of three lemmas.

\begin{lemma}
\label{lemma:atob}
Suppose $\mathcal{A}$ is an arrangement of $n \geq 4$ cameras with pairwise distinct foci. Then $\mathcal{A}$ is noncoplanar $\implies$ $H_\mathcal{A}^n\subseteq H_\mathcal{A}^2$.
\end{lemma}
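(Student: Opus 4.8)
The plan is to establish the containment on small, computer‑checkable arrangements and then propagate it to the $n$‑camera case via the bumping machinery of Lemma~\ref{lem:bumping}.

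First, by Lemmas~\ref{lem:world coordinate change} and~\ref{lem:kfocal ideals map}, whether $H^n_\mathcal{A}\subseteq H^2_\mathcal{A}$ holds depends only on the orbit of $\mathcal{A}$ under $\GL_4$ acting on world coordinates and $(\GL_3)^n$ acting on image coordinates; both actions fix the set of foci, hence preserve noncoplanarity and pairwise distinctness. For four noncoplanar cameras one uses $\GL_4$ to send the four (necessarily linearly independent) foci to $\mathbf{e}_1,\mathbf{e}_2,\mathbf{e}_3,\mathbf{e}_4$ and $(\GL_3)^4$ to bring the $i$th camera to its unique normal form — the $3\times 4$ matrix whose $i$th column is $0$ and whose remaining three columns are $\mathbf{e}_1,\mathbf{e}_2,\mathbf{e}_3$ in increasing column order — after which the arrangement is completely determined and $H^4_\mathcal{A}\subseteq H^2_\mathcal{A}$ is verified in Macaulay2. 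I would likewise record normal forms for five noncoplanar cameras (four in the above normal form, the fifth with an arbitrary distinct focus) and verify $H^5_\mathcal{A}\subseteq H^2_\mathcal{A}$; this auxiliary five‑camera statement is what makes the degenerate step below work.

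Next I would decompose $H^n_\mathcal{A}$. Since $H^n_\mathcal{A}=minors(4+n,\mathcal{A}(p))$ and, by the pigeonhole discussion following Lemma~\ref{lem:bumping}, every $n$‑focal polynomial equals $\bigl(\prod_{c\notin\tau}w_c\bigr)f$ for some $\tau$ with $2\le|\tau|\le 4$ and some $|\tau|$‑focal $f$ of $\mathcal{A}_\tau$, while Lemma~\ref{lem:bumping} shows every such product is again an $n$‑focal, one gets
\[
H^n_\mathcal{A}=\sum_{\tau:\ 2\le|\tau|\le 4}\Bigl(\prod_{c\notin\tau}\mathfrak{m}_c\Bigr)\,H^{|\tau|}_{\mathcal{A}_\tau}.
\]
It then suffices to put each summand into $H^2_\mathcal{A}$. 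The $|\tau|=2$ summand already lies in $H^2_\mathcal{A}$. For the rest I would enlarge $\tau$ to a set $\rho$ with $|\rho|\in\{4,5\}$ whose foci span $\C^4$: if the foci of $\tau$ span a hyperplane, adjoin one camera with focus off it; if they are collinear, adjoin two cameras, one off that line and one off the resulting plane — such cameras exist because $\mathcal{A}$ is noncoplanar, and they lie outside $\tau$ automatically. Since $\rho\supseteq\tau$, the prefactor splits as $\prod_{c\notin\tau}\mathfrak{m}_c=\bigl(\prod_{c\notin\rho}\mathfrak{m}_c\bigr)\prod_{c\in\rho\setminus\tau}\mathfrak{m}_c$, so Lemma~\ref{lem:bumping} carries $\bigl(\prod_{c\notin\tau}\mathfrak{m}_c\bigr)H^{|\tau|}_{\mathcal{A}_\tau}$ into $\bigl(\prod_{c\notin\rho}\mathfrak{m}_c\bigr)H^{|\rho|}_{\mathcal{A}_\rho}$; the appropriate base case (transferred from the normal form by the coordinate lemmas) gives $H^{|\rho|}_{\mathcal{A}_\rho}\subseteq H^2_{\mathcal{A}_\rho}\subseteq H^2_\mathcal{A}$, so the summand lands in $H^2_\mathcal{A}$. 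The only configuration not reachable with $|\rho|\le 5$ is $|\tau|=4$ with four collinear foci; there I would first apply Corollary~\ref{cor:Q inside T for general cameras} to rewrite the quadrifocals of $\mathcal{A}_\tau$ as combinations of trifocals of its $3$‑subsets (all collinear triples) and then handle each such trifocal by the collinear case above.

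The step I expect to be the main obstacle is this collinear case: one off‑plane camera does not restore a spanning configuration, which forces the enlargement up to five cameras and is exactly why the $n=5$ base computation is needed; one must check that the two adjoined cameras really lie in $\tau^c$ so that the available prefactor supplies the bumping variables. The other delicate point is arranging the $n=4$ and especially the $n=5$ Macaulay2 verifications so that they cover all admissible positions of the fifth focus rather than only a generic one. As in Corollary~\ref{cor:Euclidean cameras}, the whole argument is blind to whether the cameras are projective or Euclidean, since it uses only the $\GL$‑actions and the focal configuration.
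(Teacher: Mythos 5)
Your argument is sound, and it reaches the lemma by a genuinely different reduction than the paper's, so a comparison is worthwhile. The paper also starts from the fact that every $n$-focal is a monomial times a $k$-focal with $k\le 4$, but its re-bumping is confined to a fixed window of at most eight cameras: it verifies $H^n_\mathcal{A}\subseteq H^2_\mathcal{A}$ by symbolic Macaulay2 computations for $n=4,5,6$ (first four cameras normalized as in Example~\ref{ex:noncoplanar4}, the remaining cameras translational with symbolic translations), treats $n=7$ by splitting into the subcases of at most four collinear foci versus five collinear foci (the latter needing its own symbolic computation), and for $n\ge 8$ writes each $n$-focal as a monomial multiple of an $8$-focal built from four fixed noncoplanar cameras plus the at most four cameras contributing two rows, invoking Lemma~\ref{lem:Q inside T for translational cameras} to trade quadrifocals for trifocals and land on $7$-focals of noncoplanar seven-camera arrangements. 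Your decomposition $H^n_\mathcal{A}=\sum_{\tau}\bigl(\prod_{c\notin\tau}\mathfrak{m}_c\bigr)H^{|\tau|}_{\mathcal{A}_\tau}$ exploits the fact that the monomial prefactor carries one variable from \emph{every} camera outside $\tau$, so the augmenting cameras (one focus off the plane, or one off the line plus one off the resulting plane) may be chosen anywhere in $[n]\setminus\tau$; combined with Corollary~\ref{cor:Q inside T for general cameras} to absorb quadrifocals of four collinear foci into trifocals of collinear triples, this collapses the whole lemma onto the noncoplanar four- and five-camera base cases and dispenses with the $n=6$ and $n=7$ computations and the delicate five-collinear subcase altogether; the point you flagged as the main obstacle does resolve, since the adjoined cameras have foci off the line through all foci of $\tau$ and hence automatically lie in $\tau^c$, so the prefactor supplies the bumping variables. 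What the paper's route buys is only that it avoids setting up the ideal-sum decomposition and the case analysis on the span of the foci of $\tau$, at the price of heavier computation; your route is leaner. Two small caveats: the $\GL_4$ action does not fix the foci (it moves them by $G^{-1}$), though it preserves their distinctness and coplanarity relations, which is all you use; and your concern that the symbolic $n=4,5$ verifications must cover all admissible, not merely generic, positions of the fifth focus is legitimate but is equally present in the paper's own $n=5,6,7$ computations --- it is addressed by certifying the containment over the polynomial ring in the symbolic camera parameters (so that it specializes to every value) and by handling separately the stratum where the fifth camera cannot be brought to the chosen normal form, for instance a focus on the plane at infinity.
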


\begin{proof}
{$\mathbf{ n=4,5,6}$}. If $\mathcal{A}$ is noncoplanar, then there is some subset of four cameras that is noncoplanar. Order the cameras in $\mathcal{A}$ so that these are the cameras $A_1, \dots, A_4$. By a change of coordinates on $\P^3$, we can send the foci of the cameras $A_1, \dots, A_4$ to the foci of the cameras in $\mathcal{A}_1$ from Example~\ref{ex:noncoplanar4}. Then, by Lemma \ref{lem:kfocal ideals map}, applying $\P^2$ coordinate changes using some $\mathcal{G} \in (GL_3)^n$, we can assume that $\mathcal{A}$ is an arrangement of translational cameras. These transformations fix the first four cameras, and 
we think of the cameras $A_i$ for $i\ge 5$ as variable, represented symbolically by their translations, and the implication  can confirmed by direct calculation in Macaulay2.

{$\mathbf{ n=7}$.} In this case, the full computation is too expensive. To make the computation feasible, we split the proof into two cases, depending on whether the arrangement has five collinear cameras or not.

{\bf Case I:} If a noncoplanar arrangement of seven cameras has at most four collinear cameras, then every four camera subset can be augmented with two additional cameras to get a noncoplanar arrangement of six cameras. Thus every 7-focal of such an arrangement, which looks like $w_i w_j w_k q$ for some quadrifocal $q$, has the form of a 6-focal from a noncoplanar arrangement, say $w_i w_j q$, multiplied by a coordinate $w_k$.
The $n=6$ case shows that $w_i w_j q$ is generated by 2-focals, hence $w_i w_j w_k q$ is generated by 2-focals.

{\bf Case II:} We now consider the case of noncoplanar seven camera arrangements in which five cameras are collinear.
In this case, by a proper choice of camera ordering and $\P^3$ coordinate change, we can assume the translations of $A_5, A_6, A_7$ are of the form $\mathbf{t}_5 = (\la_5, 0,0)^\top,\mathbf{t}_5 = (\la_6, 0,0)^\top,\mathbf{t}_5 = (\la_7, 0,0)^\top $ where the $\lambda_i$ are symbolic. 
This makes $A_1, A_2, A_5,A_6,A_7$ collinear. The choice to take the line that the cameras lie on to be the $x$ axis is arbitrary, but can be made without loss of generality. This arrangement is now described by few enough variables to enable a direct computation showing that  $H_\mathcal{A}^7 \subseteq H_\mathcal{A}^2$. 

$\mathbf{n \ge 8}$. Now suppose $n \geq 8$ and $f$ is an 
$n$-focal of $\mathcal{A}$. 
Recall that $f$ involves all $n$ cameras but at most four cameras can contribute two rows to the matrix whose determinant is $f$. At one extreme, these four cameras maybe $A_1, \ldots, A_4$ and at the other extreme they might be four cameras different from the first four, which we call $A_5, \ldots, A_8$. Thus the $n$-focal $f \in H_\mathcal{A}^n$ is a monomial multiple of a 8-focal $g = m q$ of  $\{A_1, \dots, A_4, A_5, \dots, A_8\}$ where 
where $q$ is a quadrifocal and $m$ is a monomial.

If the four cameras contributing to $q$ involve $A_1, \dots, A_4$, then $g$ is a multiple of a 7-focal from noncoplanar cameras. On the other hand, if $q \in H_{A_5, \dots, A_8}^4$, then $q$ can be generated by the trifocals of $A_5, \ldots, A_8$ by Lemma \ref{lem:Q inside T for translational cameras}:
\[
g = m \left(\sum_{t_i \in H_{A_5, \dots, A_8}^3} h_i t_i \right) =  \sum_{t_i \in H_{A_5, \dots, A_8}^3} h_i (m t_i).
\]
In particular, this shows that $g$ can be generated from 7-focals, $m t_i$. These come from noncoplanar seven camera arrangements because $A_1, \dots, A_4$ are noncoplanar. In either case, we know that such 7-focals can be generated by 2-focals, hence $g \in H_\mathcal{A}^2$. It follows that $f \in H_\mathcal{A}^2$, as desired.
\end{proof}

\begin{lemma}
\label{lemma:btoc}

Suppose $\mathcal{A}$ is an arrangement of $n \geq 4$ cameras with pairwise distinct foci. Then $H_\mathcal{A}^n\subseteq  H_\mathcal{A}^2 \implies M_\mathcal{A} = H_\mathcal{A}^2 : \mathfrak{m}$.
\end{lemma}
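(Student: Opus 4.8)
The plan is to prove the two inclusions $H_\mathcal{A}^2 : \mathfrak{m} \subseteq M_\mathcal{A}$ and $M_\mathcal{A} \subseteq H_\mathcal{A}^2 : \mathfrak{m}$ separately; the first is purely formal, and the hypothesis $H_\mathcal{A}^n \subseteq H_\mathcal{A}^2$ only enters in the second. For the first inclusion I would argue exactly as in the proof of Theorem~\ref{thm:HZsaturated}: since the bifocals vanish on $\mathbf{V}(M_\mathcal{A})$ we have $H_\mathcal{A}^2 \subseteq M_\mathcal{A}$, hence $H_\mathcal{A}^2 : \mathfrak{m} \subseteq M_\mathcal{A} : \mathfrak{m}$; but $M_\mathcal{A}$ is prime and contains no monomial, so $f\mathfrak{m}\subseteq M_\mathcal{A}$ forces $f u \in M_\mathcal{A}$ for the generator $u = \prod_{i=1}^n w_i$ of $\mathfrak{m}$ and therefore $f \in M_\mathcal{A}$, giving $M_\mathcal{A} : \mathfrak{m} = M_\mathcal{A}$.

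For the second inclusion I would first isolate the key claim $\mathfrak{m}\cdot H_\mathcal{A}^3 \subseteq H_\mathcal{A}^n$, which holds unconditionally. It suffices to check it on generators: let $t$ be a $3$-focal polynomial from a triple $\sigma = \{i,j,k\}$ and let $u = \prod_{m=1}^n w_m$ be a generator of $\mathfrak{m}$, with $w_m \in \{x_m, y_m, z_m\}$. Write $u = (w_i w_j w_k)\prod_{m\notin\sigma} w_m$. Applying Lemma~\ref{lem:bumping} with $k = 3$ and $l = n$ (legitimate since $n \geq 4$), the product $\bigl(\prod_{m\notin\sigma} w_m\bigr)\,t$ is an $n$-focal polynomial $g \in H_\mathcal{A}^n$, so $u\,t = (w_i w_j w_k)\,g \in H_\mathcal{A}^n$. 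Since the $3$-focals generate $H_\mathcal{A}^3$ and the multilinear monomials $\prod_m w_m$ generate $\mathfrak{m}$, this yields $\mathfrak{m}\cdot H_\mathcal{A}^3 \subseteq H_\mathcal{A}^n$, and invoking the hypothesis gives $\mathfrak{m}\cdot H_\mathcal{A}^3 \subseteq H_\mathcal{A}^2$. Now take $f \in M_\mathcal{A}$; by Theorem~\ref{thm:BA+TA=JA}, which applies because the foci are pairwise distinct, write $f = b + t'$ with $b \in H_\mathcal{A}^2$ and $t' \in H_\mathcal{A}^3$. For any $g \in \mathfrak{m}$ we get $fg = bg + t'g$ with $bg \in H_\mathcal{A}^2$ (ideal property) and $t'g \in H_\mathcal{A}^2$ by the claim, so $fg \in H_\mathcal{A}^2$; hence $f \in H_\mathcal{A}^2 : \mathfrak{m}$, which is the second inclusion.

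There is no substantial obstacle: the proposition is an assembly of Theorem~\ref{thm:BA+TA=JA}, the bumping Lemma~\ref{lem:bumping}, and the primeness of $M_\mathcal{A}$, with all the genuine difficulty displaced into establishing the hypothesis $H_\mathcal{A}^n \subseteq H_\mathcal{A}^2$ (the content of Lemma~\ref{lemma:atob}). The only point demanding care is the bookkeeping in the bumping step: one must peel exactly the three factors $w_i w_j w_k$ off of $u$ so that precisely the complementary variables $\prod_{m\notin\sigma} w_m$ remain to promote the $3$-focal $t$ to an $n$-focal, leaving $u\,t$ as a monomial multiple of that $n$-focal.
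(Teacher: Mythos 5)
Your proof is correct and follows essentially the same route as the paper: both directions use $M_\mathcal{A}:\mathfrak{m}=M_\mathcal{A}$ (primeness, no monomials) for the easy inclusion, and for the reverse use Theorem~\ref{thm:BA+TA=JA} to reduce to $H_\mathcal{A}^2, H_\mathcal{A}^3 \subseteq H_\mathcal{A}^2:\mathfrak{m}$, promoting trifocals to $n$-focals via Lemma~\ref{lem:bumping} and then invoking the hypothesis $H_\mathcal{A}^n\subseteq H_\mathcal{A}^2$. Your explicit isolation of the unconditional claim $\mathfrak{m}\cdot H_\mathcal{A}^3\subseteq H_\mathcal{A}^n$ is just a slightly more careful bookkeeping of the same argument.
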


\begin{proof}
If $f \in H^2_\mathcal{A} : \mathfrak{m}$, then $f(\prod z_i) \in H^2_\mathcal{A} \subseteq M_\mathcal{A}$, vanishes on $\mathbf{V}(M_\mathcal{A})$. Since $M_\mathcal{A}$ is prime and does not contain any monomials, $f \in M_\mathcal{A}$. Therefore, $H^2_\mathcal{A} : \mathfrak{m} \subseteq M_\mathcal{A}$.
For the other containment, by Theorem \ref{thm:BA+TA=JA}, it suffices to show that $H^2_\mathcal{A}$ and $H^3_\mathcal{A}$ are contained in $H^2_\mathcal{A} : \mathfrak{m}$. It is clear that $H^2_\mathcal{A} \subseteq H^2_A : \mathfrak{m}$. By Lemma~\ref{lem:bumping}, 
multiplying any $t \in H^3_\mathcal{A}$ by a generator $\prod w_i $ of $\mathfrak{m}$ yields a monomial multiple of an $n$-focal. By assumption, this $n$-focal lies in $H_\mathcal{A}^2$. Thus, 
$t  \in  H^2_A: \mathfrak{m}$ and $M_\mathcal{A} \subseteq H^2_A: \mathfrak{m}$.
\end{proof}

\begin{lemma}
\label{lemma:ctoa}
Suppose $\mathcal{A}$ is an arrangement of $n \geq 4$ cameras with pairwise distinct foci. Then $M_\mathcal{A} = H_\mathcal{A}^2 : \mathfrak{m} \implies \mathcal{A}$ is noncoplanar.
\end{lemma}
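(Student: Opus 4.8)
The plan is to prove the contrapositive: if $\mathcal{A}$ is coplanar (recall $n \geq 4$), then $H^2_\mathcal{A} : \mathfrak{m} \neq M_\mathcal{A}$. Since $H^2_\mathcal{A} \subseteq M_\mathcal{A}$ and $M_\mathcal{A}$ is prime containing no monomials, the inclusion $H^2_\mathcal{A} : \mathfrak{m} \subseteq M_\mathcal{A} : \mathfrak{m} = M_\mathcal{A}$ is automatic, so it suffices to exhibit a large spurious component of $\mathbf{V}(H^2_\mathcal{A})$ that is disjoint from the irrelevant locus and therefore survives saturation by $\mathfrak{m}$.

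First I would fix the plane $\Pi \subset \P^3$ containing all the foci $\mathbf{c}_1, \dots, \mathbf{c}_n$, and set $\ell_i := A_i(\Pi) \subset \P^2$. Because $\mathbf{c}_i \in \Pi$, the restriction $A_i|_\Pi$ is projection from $\mathbf{c}_i$ inside $\Pi$, a rank-two map, so each $\ell_i$ is a genuine line. The key geometric claim is $\ell_1 \times \cdots \times \ell_n \subseteq \mathbf{V}(H^2_\mathcal{A})$. Since $H^2_\mathcal{A}$ is generated by the $\binom{n}{2}$ bifocals $\det \mathcal{A}_{\{i,j\}}(p)$, a point $\mathbf{p} = (\mathbf{p}_1, \dots, \mathbf{p}_n)$ lies in $\mathbf{V}(H^2_\mathcal{A})$ iff every $6 \times 6$ matrix $\mathcal{A}_{\{i,j\}}(\mathbf{p}_i, \mathbf{p}_j)$ is rank deficient, i.e. has a nonzero kernel vector $(\mathbf{q}, -\lambda_i, -\lambda_j)$. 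When $\mathbf{p}_i \in \ell_i$, the back-projected line $L_i := A_i^{-1}(\mathbf{p}_i)$ passes through $\mathbf{c}_i \in \Pi$ and through a point of $\Pi$ lying over $\mathbf{p}_i$ (two distinct points of $\Pi$), hence $L_i \subseteq \Pi$; then $L_i$ and $L_j$ are two lines in the projective plane $\Pi$ and so meet in a point $\mathbf{q}$, which — with appropriate scalars $\lambda_i, \lambda_j$, possibly zero if $\mathbf{q}$ is a focus — supplies the desired kernel vector, since $\mathbf{q} \neq 0$ as a point of $\P^3$.

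It then remains to compare dimensions. The product $\ell_1 \times \cdots \times \ell_n$ is irreducible of dimension $n \geq 4$, whereas $\mathbf{V}(M_\mathcal{A})$ is three-dimensional, so the product is not contained in $\mathbf{V}(M_\mathcal{A})$; choose $\mathbf{p}$ in the product with $\mathbf{p} \notin \mathbf{V}(M_\mathcal{A})$. If $H^2_\mathcal{A} : \mathfrak{m} = M_\mathcal{A}$ there would be $g \in M_\mathcal{A} = H^2_\mathcal{A} : \mathfrak{m}$ with $g(\mathbf{p}) \neq 0$; since each $\mathbf{p}_i \neq 0$ we may pick a monomial generator $u = \prod_i w_i$ of $\mathfrak{m}$ with $u(\mathbf{p}) \neq 0$, and then $gu \in H^2_\mathcal{A}$ while $(gu)(\mathbf{p}) \neq 0$ contradicts $\mathbf{p} \in \mathbf{V}(H^2_\mathcal{A})$. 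Hence $H^2_\mathcal{A} : \mathfrak{m} \neq M_\mathcal{A}$, which is the contrapositive of the statement.

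The main obstacle is the geometric claim $\ell_1 \times \cdots \times \ell_n \subseteq \mathbf{V}(H^2_\mathcal{A})$ — specifically, verifying that the back-projected line $L_i$ of any $\mathbf{p}_i \in \ell_i$ lies entirely inside $\Pi$ rather than merely meeting it, which is exactly what forces all pairwise epipolar constraints to vanish simultaneously. The dimension bookkeeping, and the role of the hypothesis $n \geq 4$ (so that the $n$-dimensional product genuinely exceeds the three-dimensional multiview variety), is routine afterward; one only needs minor care with coincidences such as $\mathbf{q} = \mathbf{c}_i$, which merely make some $\lambda$ vanish without killing the $\mathbf{q}$-part of the kernel vector.
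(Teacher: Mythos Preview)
Your proof is correct and follows essentially the same approach as the paper: both argue the contrapositive by showing that the product of image lines $\ell_1 \times \cdots \times \ell_n$ of the common plane $\Pi$ lies in $\mathbf{V}(H^2_\mathcal{A})$ but not in $\mathbf{V}(M_\mathcal{A})$. The only notable difference is that the paper exhibits an explicit point outside $\mathbf{V}(M_\mathcal{A})$ by violating a trifocal constraint among the first three cameras, whereas you use the cleaner dimension count $\dim(\ell_1 \times \cdots \times \ell_n) = n \geq 4 > 3 = \dim \mathbf{V}(M_\mathcal{A})$, which makes the role of the hypothesis $n \geq 4$ more transparent.
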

\begin{proof}
We prove the contrapositive, namely that if $\mathcal{A}$ is coplanar then 
$M_\mathcal{A} \neq H^2_A: \mathfrak{m}$. We will  construct a point $\mathbf{p} \in \mathbf{V}(H^2_\mathcal{A} : \mathfrak{m}) \minus \mathbf{V}(M_\mathcal{A})$, from which the result will follow. 

Let $\mathbf{n} \in \mathbb{P}^3$ be the normal vector of a plane containing the foci of the cameras in $\mathcal{A}$. If the foci are not collinear then $\mathbf{n}$ is unique, otherwise we choose any plane containing the foci and its normal $\mathbf{n}$.  Let $l_i \subseteq \P^2$ denote the image of the plane $\mathbf{n}^\perp$ in camera $i$, and let $\mathbf{e}_{i,j}$ denote the image of the focal point of camera $j$ in image $i$. Then $\mathbf{e}_{i,j} \in l_i$ since the focal point of camera $j$ lies in $\mathbf{n}^\perp$. Choose $\mathbf{p}_1 \in l_1 \minus \{\mathbf{e}_{1,2} , \mathbf{e}_{1,3}\}$ and $\mathbf{p}_2 \in l_2  \minus\{ \mathbf{e}_{2,1} , \mathbf{e}_{2,3}\}$. Then there is a unique world point $\mathbf{q}$ on $\mathbf{n}^\perp$ whose images in cameras $1$ and $2$ are $\mathbf{p}_1$ and $\mathbf{p}_2$. Let $\tilde{\mathbf{p}}_3 \in l_3$ be the (unique) image of $\mathbf{q}$ in camera $3$. Then $\mathbf{p}_1, \mathbf{p}_2, \tilde{\mathbf{p}}_3$ satisfy trifocal constraints. Choose $\mathbf{p}_3 \in l_3 \minus \{\tilde{\mathbf{p}}_3\}$ and some $\mathbf{p}_i \in l_i$ for $i\ge 4$. By construction, $\mathbf{p} \notin \mathbf{V}(M_\mathcal{A})$. Since the cameras are coplanar, the epipolar plane given by $\mathbf{q}$ and any two cameras $i$ and $j$ is $\mathbf{n}^\perp$ for any pair $i,j$. By choosing $\mathbf{p}_i \in l_i$ for all $i$, we force every bifocal polynomial to vanish on $\mathbf{p}$. Therefore by construction, $\mathbf{p} \in \mathbf{V}(H_\mathcal{A}^2) \minus \mathbf{V}(M_\mathcal{A})$, but since $\mathbf{V}( H^2_\mathcal{A}) = \mathbf{V}( H^2_\mathcal{A} : \mathfrak{m})$, we conclude that $H^2_\mathcal{A} : \mathfrak{m}  \neq M_\mathcal{A}$.
\end{proof}

\noindent Together, Lemmas~\ref{lemma:atob}, \ref{lemma:btoc}, \ref{lemma:ctoa} imply the following theorem.
\begin{theorem}\label{thm:bifocals saturated}
Suppose $\mathcal{A}$ is an arrangement of $n \geq 4$ cameras with pairwise distinct foci. Then the following are equivalent.
\begin{enumerate}[label=(\alph*)]
    \item $\mathcal{A}$ is noncoplanar.
    \item $H_\mathcal{A}^n\subseteq H_\mathcal{A}^2$.
    \item $M_\mathcal{A} = H_\mathcal{A}^2 : \mathfrak{m}$.
\end{enumerate}
\end{theorem}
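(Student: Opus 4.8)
The plan is to prove the three-way equivalence by closing a cycle of implications, (a) $\Rightarrow$ (b) $\Rightarrow$ (c) $\Rightarrow$ (a), using the three lemmas just established. Since all of Lemmas~\ref{lemma:atob}, \ref{lemma:btoc}, and~\ref{lemma:ctoa} assume exactly the standing hypothesis of the theorem ($n \geq 4$ cameras with pairwise distinct foci), no reconciliation of hypotheses is needed: Lemma~\ref{lemma:atob} gives (a) $\Rightarrow$ (b), Lemma~\ref{lemma:btoc} gives (b) $\Rightarrow$ (c), and Lemma~\ref{lemma:ctoa} gives (c) $\Rightarrow$ (a). Chaining these shows that each of the three statements implies the other two, which is the claimed equivalence. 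In this sense the proof of the theorem itself is pure bookkeeping; the mathematical content lives in the lemmas.

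Accordingly, the step I expect to be the real obstacle is already discharged inside Lemma~\ref{lemma:atob}, namely the containment $H^n_\mathcal{A} \subseteq H^2_\mathcal{A}$ for noncoplanar arrangements. That argument proceeds by first using the coordinate-change lemmas (Lemmas~\ref{lem:world coordinate change} and~\ref{lem:kfocal ideals map}) to reduce to translational cameras with the first four foci in a fixed noncoplanar position; verifying the base cases $n = 4, 5, 6$ and the two subcases of $n = 7$ by direct Macaulay2 computation with the remaining translations kept symbolic; and then running an induction for $n \geq 8$ that writes an $n$-focal as a monomial multiple of an $8$-focal, uses $H^4_\mathcal{T} \subseteq H^3_\mathcal{T}$ for translational cameras (Lemma~\ref{lem:Q inside T for translational cameras}) to pass to $7$-focals coming from a noncoplanar subarrangement, and invokes the $n = 7$ case. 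The converse direction (c) $\Rightarrow$ (a), handled in Lemma~\ref{lemma:ctoa}, is the other place where something genuinely happens: one builds an explicit point in $\mathbf{V}(H^2_\mathcal{A} : \mathfrak{m}) \minus \mathbf{V}(M_\mathcal{A})$ when the foci are coplanar, using that $\mathbf{V}(H^2_\mathcal{A} : \mathfrak{m}) = \mathbf{V}(H^2_\mathcal{A})$ and that in the coplanar case every epipolar plane through a world point on the foci-plane is forced to be that plane, so choosing image points on the images of that plane kills all bifocals while failing the trifocal constraints.

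So the proof I would write is short: invoke Lemmas~\ref{lemma:atob}, \ref{lemma:btoc}, and~\ref{lemma:ctoa} to obtain (a) $\Rightarrow$ (b) $\Rightarrow$ (c) $\Rightarrow$ (a), and conclude that (a), (b), (c) are equivalent. One could optionally remark that implications such as (b) $\Rightarrow$ (a) or (c) $\Rightarrow$ (b) also follow by composing the available implications around the cycle, but this is automatic once the cycle is closed.
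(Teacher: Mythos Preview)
Your proposal is correct and matches the paper's approach exactly: the paper simply states that Lemmas~\ref{lemma:atob}, \ref{lemma:btoc}, and~\ref{lemma:ctoa} together imply the theorem, closing the cycle (a) $\Rightarrow$ (b) $\Rightarrow$ (c) $\Rightarrow$ (a). Your summary of where the real work lives (inside Lemmas~\ref{lemma:atob} and~\ref{lemma:ctoa}) is also accurate.
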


We now make some observations about Theorem~\ref{thm:bifocals saturated}.

Theorem 6.1 in~\cite{HA97} observes that $\mathbf{V}(H_\mathcal{A}^2) = \mathbf{V}(M_\mathcal{A})$ for noncoplanar $\mathcal{A}$ while Proposition 5 (2) in \cite{THP15} further shows that $\mathbf{V}(H_\mathcal{A}^2) = \mathbf{V}(M_\mathcal{A})$ is equivalent to the foci of $\mathcal{A}$ being noncoplanar. Our Theorem~\ref{thm:bifocals saturated} proves the analogous ideal statement, namely that noncoplanarity of foci is equivalent to $M_\mathcal{A} = H^2_\mathcal{A} \,:\, \mathfrak{m}$.

Example~\ref{ex:coplanar4} shows how Theorem ~\ref{thm:bifocals saturated} fails when $\mathcal{A}$ is coplanar. The bifocal ideal $H^2_{\mathcal{A}_2}$ contains the component $\langle x_1 + y_1 + z_1, x_2 + y_2 + z_2, x_3 + y_3 + z_3, x_4 + y_4 + z_4\rangle $, which cannot be removed by saturating with respect to $\mathfrak{m}$. Its variety cuts out the 
projections of the plane containing the foci of $\mathcal{A}_2$ in each camera image. This 
plane in $\P^3$ has normal vector $(1,1,1,-1)$. The following example shows that further degeneracy occurs when camera foci are collinear.

\begin{example}
\label{ex:collinear4}
Consider the four collinear translational camera arrangement $\mathcal{A}_3$ where $\mathbf{t}_1 = (0,0,0)$, $ \mathbf{t}_2  = (1,0,0)$, $\mathbf{t}_3 = (2,0,0)$, $\mathbf{t}_4 = (3,0,0)$. Here, $H_{\mathcal{A}_3}^2 \subseteq M_{\mathcal{A}_3}$, but both ideals are prime, so $M_{\mathcal{A}_3}$ cannot occur as a component of $H_{\mathcal{A}_3}^2$. In addition, the dimension of   $H_{\mathcal{A}_3}^2$ is one larger than that of $M_{\mathcal{A}_3}$. This is explained by the fact that there is an entire one-dimensional family of planes that contains the camera centers of $\mathcal{A}_3$.
\end{example}

As seen in the above examples and discussion, the relation between $H_\mathcal{A}^2$ and  $M_\mathcal{A}$ can be complicated when camera centers are coplanar or collinear. Determining the exact relationship between ideals in these degenerate settings would be an interesting problem for the future.

In Theorem~\ref{thm:bifocals saturated} we showed that 
when cameras are noncoplanar, the $n$-focal ideal becomes 
a subset of the $2$-focal ideal. We now give an example 
to show that this containment need not hold for 
$H^k_\mathcal{A}$ where $n > k > 2$. The construction relies on having three of five cameras being collinear.

\begin{example}
Consider the five translational camera arrangement $\mathcal{B}$ with $\mathbf{t}_1 = (0,0,0), \mathbf{t}_2 = (0,0,1) , \mathbf{t}_3 = (0,0,2), \mathbf{t}_4 = (0,1,0), \mathbf{t}_5 = (0,0,1)$. Theorem \ref{thm:bifocals saturated} shows that $H_\mathcal{B}^5 \subseteq H_\mathcal{B}^2$ since $\mathcal{B}$ is noncoplanar. However the following  trifocal from $B_1, B_2, B_3$, 
\[
t = -{x}_{1}{y}_{2}{y}_{3}+2\,{x}_{2}{y}_{1}{y}_{3}-{x}_{3}{y}_{1}{y}_{2}
\]
is not in $H_\mathcal{B}^2$. Similarly, the quadrifocal, 
\[
q = x_4 t = -{x}_{1}{x}_{4}{y}_{2}{y}_{3}+2\,{x}_{2}{x}_{4}{y}_{1}{y}_{3}-{x}_{3}{x}_{4}{y}_{1}{y}_{2},
\]
from cameras $B_1, B_2, B_3, B_4$ is not in $H_\mathcal{B}^2$. 
\end{example}

\section{Finite Images}
\label{sec:finite images}


The results of the previous sections have important practical consequences when we restrict attention to the 
set of all finite images, that is to all $(\mathbf{p_1}, \dots, \mathbf{p_n}) \in \mathbf{V}(M_\mathcal{A})$ with $z_i \neq 0$ for all $i$. 
The vanishing ideal of this affine patch is obtained by dehomogenizing $M_\mathcal{A}$ with respect to the variables $z_i$ from each image plane. We call this the {\em affine multiview ideal} of $\mathcal{A}$ and denote it $\pi(M_\mathcal{A})$, where $\pi : \C[x_i,y_i,z_i] \to \C[x_i,y_i]$ is the map setting each $z_i$ to 1. From Theorem~\ref{thm:BA+TA=JA}, we see that $\pi(M_\mathcal{A})$ is generated by dehomogenized bifocals and dehomogenized trifocals when the foci of $\mathcal{A}$ are pairwise distinct.

\begin{corollary}\label{cor:dehomogenizingMA}
If $\mathcal{A}$ is a camera arrangement with pairwise distinct foci, then 
$\pi(M_\mathcal{A}) = \pi(H_\mathcal{A}^2) + \pi(H_\mathcal{A}^3)$.
\end{corollary}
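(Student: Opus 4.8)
The plan is to deduce this directly from Theorem~\ref{thm:BA+TA=JA}. Since $\mathcal{A}$ has pairwise distinct foci, that theorem gives the exact equality of homogeneous ideals $M_\mathcal{A} = H_\mathcal{A}^2 + H_\mathcal{A}^3$ in $\mathbb{C}[p_1,\ldots,p_n]$. The corollary then follows by applying the dehomogenization map $\pi$ to both sides, once we observe that $\pi$ interacts correctly with sums of ideals.

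The only point to verify is that $\pi(I+J)=\pi(I)+\pi(J)$ for homogeneous ideals $I,J$. This is a general fact about a surjective ring homomorphism: the containment $\pi(I+J)\supseteq \pi(I)+\pi(J)$ is immediate, and conversely every element of $I+J$ is of the form $f+g$ with $f\in I$ and $g\in J$, so $\pi(f+g)=\pi(f)+\pi(g)\in \pi(I)+\pi(J)$; surjectivity of $\pi\colon \mathbb{C}[x_i,y_i,z_i]\to\mathbb{C}[x_i,y_i]$ also guarantees that each $\pi(I)$ is itself an ideal of $\mathbb{C}[x_i,y_i]$, so that all three objects in the statement are genuine ideals and the equality makes sense. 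Applying this with $I=H_\mathcal{A}^2$ and $J=H_\mathcal{A}^3$, and using Theorem~\ref{thm:BA+TA=JA}, gives $\pi(M_\mathcal{A})=\pi(H_\mathcal{A}^2+H_\mathcal{A}^3)=\pi(H_\mathcal{A}^2)+\pi(H_\mathcal{A}^3)$.

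There is essentially no obstacle here: all the mathematical content sits in Theorem~\ref{thm:BA+TA=JA}, and this corollary is the routine observation that dehomogenization is a surjective ring map and hence commutes with finite ideal sums. If desired, one could add a sentence noting that $\pi(M_\mathcal{A})$ is precisely the vanishing ideal of the locus of finite images $\{(\mathbf{p}_1,\ldots,\mathbf{p}_n)\in\mathbf{V}(M_\mathcal{A}): z_i\neq 0\ \forall i\}$, which is what makes the identity meaningful in applications, but this interpretation is not needed to prove the stated equality.
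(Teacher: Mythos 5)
Your proof is correct and follows the paper's (implicit) argument exactly: the corollary is read off by applying the dehomogenization map $\pi$ to the equality $M_\mathcal{A}=H_\mathcal{A}^2+H_\mathcal{A}^3$ of Theorem~\ref{thm:BA+TA=JA}, with the routine observation that a surjective ring homomorphism carries ideals to ideals and commutes with finite ideal sums.
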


Using the following fact about dehomogenizing colon ideals,  the results of Section~\ref{sec:determinantal ideals} yield a nice relation among $\pi(H_\mathcal{A}^n), \pi(F_\mathcal{A}), \pi(Y_\mathcal{A}) $, and the affine multiview ideal, $\pi(M_\mathcal{A})$.

\begin{lemma}
\label{lem:dehomogenizecolon}
For ideals $I, J \subset \C[x_i,y_i,z_i] $,  $\pi (I:J) = \pi(I) : \pi(J)$. 
\end{lemma}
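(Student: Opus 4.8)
The plan is to prove the two containments $\pi(I:J) \subseteq \pi(I):\pi(J)$ and $\pi(I):\pi(J) \subseteq \pi(I:J)$ directly from the definitions, using the key fact that dehomogenization $\pi$ is a ring homomorphism (a surjection $\C[x_i,y_i,z_i] \to \C[x_i,y_i]$). Recall that $I:J = \{f : fg \in I \text{ for all } g \in J\}$.

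For the forward containment $\pi(I:J) \subseteq \pi(I):\pi(J)$, I would take an element $\pi(f)$ with $f \in I:J$ and an arbitrary element of $\pi(J)$, which has the form $\pi(g)$ for some $g \in J$. Then $fg \in I$ since $f \in I:J$, so $\pi(f)\pi(g) = \pi(fg) \in \pi(I)$ because $\pi$ is a ring homomorphism. Hence $\pi(f) \in \pi(I):\pi(J)$, establishing the containment.

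The reverse containment $\pi(I):\pi(J) \subseteq \pi(I:J)$ is the part requiring a little care, since we must lift an element of $\C[x_i,y_i]$ back to $\C[x_i,y_i,z_i]$. Suppose $\bar f \in \pi(I):\pi(J)$. Pick any homogeneous-in-each-block lift $f$ of $\bar f$ (so $\pi(f) = \bar f$). For every $g \in J$ we have $\pi(f)\pi(g) = \pi(fg) \in \pi(I)$, so there exists $h \in I$ with $\pi(fg) = \pi(h)$, i.e. $\pi(fg - h) = 0$. The kernel of $\pi$ is the ideal $\langle z_1 - 1, \dots, z_n - 1 \rangle$, but more usefully, $\pi(F) = 0$ means $F$ vanishes on the affine patch where all $z_i = 1$; equivalently, multiplying $F$ by a suitable monomial $\prod z_i^{a_i}$ and re-homogenizing recovers a multiple. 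The cleanest route: since $\pi(fg) \in \pi(I)$ for every $g \in J$, and multiplying a polynomial in $\C[x_i,y_i,z_i]$ by a power of $z_i$ does not change its image under $\pi$, one shows that some $\prod z_i^{a_i} \cdot fg$ lies in $I$ — using that any element of $\pi(I)$ lifts to an element of $I$ after multiplication by a monomial in the $z_i$ (this is the standard homogenization/dehomogenization correspondence, valid because $I$ can be taken multihomogeneous in the relevant grading, or by replacing $I$ with its saturation with respect to $z_1 \cdots z_n$, which has the same dehomogenization). Then $(\prod z_i^{a_i} f) \cdot g \in I$ for all $g \in J$, so $\prod z_i^{a_i} f \in I:J$, and applying $\pi$ gives $\bar f = \pi(\prod z_i^{a_i} f) \in \pi(I:J)$.

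The main obstacle is purely bookkeeping: making precise the statement "$\pi(F) \in \pi(I)$ implies $\prod z_i^{a_i} F \in I$ for some exponents $a_i$," since in general $\pi(I)$ is only the set-theoretic image and one must be careful about whether $I$ is saturated with respect to the $z_i$. This is handled by the elementary observation that for any polynomial $F$ and any $i$, $\pi(F) = \pi(z_i F)$, so replacing $I$ by $(I : (z_1\cdots z_n)^\infty)$ — which is the ideal of polynomials agreeing with $I$ on the torus $\{z_i \neq 0\}$ and hence has $\pi((I:(z_1\cdots z_n)^\infty)) = \pi(I)$ — lets us assume without loss of generality that $I$ is $(z_1\cdots z_n)$-saturated, after which $\pi(F) \in \pi(I)$ genuinely forces $F \in I$ up to a monomial in the $z_i$. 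Once this lemma-within-a-lemma is in place, both containments are immediate, and in fact the same argument applies verbatim to $J$, so no asymmetry between $I$ and $J$ causes trouble.
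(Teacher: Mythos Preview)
Your forward containment is correct and matches the paper verbatim. The reverse containment, however, has a real gap. You propose to replace $I$ by its saturation $I' = I:(z_1\cdots z_n)^\infty$ ``without loss of generality'' because $\pi(I') = \pi(I)$; but this replacement also changes the left-hand side $\pi(I:J)$ to $\pi(I':J)$, and you never check that these agree (they do, via $(I:(z_1\cdots z_n)^\infty):J = (I:J):(z_1\cdots z_n)^\infty$, but the step is missing). More seriously, even for saturated $I$ your key assertion ``$\pi(F)\in\pi(I)$ forces $\prod z_i^{a_i} F \in I$ for some $a_i$'' fails without multihomogeneity: take $I=0$ (trivially saturated) and $F = z_1-1$; then $\pi(F)=0\in\pi(I)$, yet no $z$-monomial multiple of $z_1-1$ is zero. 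Pushing this further, with $I=0$ and $J=\langle z_1-1\rangle$ one finds $\pi(I:J)=0$ while $\pi(I):\pi(J)=0:0$ is the whole ring, so the lemma as stated---for \emph{arbitrary} ideals---is in fact false.

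The paper's route for the reverse direction is shorter and avoids saturation entirely: it takes the multihomogenization $\tilde f$ of $f\in\pi(I):\pi(J)$ and, for each $h\in J$, uses $\pi(\tilde f h)=\pi(g)$ with $g\in I$, then ``homogenizes both sides'' to conclude $\tilde f h = g\in I$. That last step only makes sense when $h$ and $g$ can be taken multihomogeneous, i.e., when $I$ and $J$ are multihomogeneous---which is the only case the paper ever applies the lemma to (always $J=\mathfrak m$ and $I$ one of the multihomogeneous ideals $M_\mathcal{A}$, $H_\mathcal{A}^n$, $F_\mathcal{A}$, $H_\mathcal{A}^2$). Your saturation approach can likewise be made rigorous under that hypothesis, but as written it does not stand.
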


\begin{proof}
If $f \in \pi(I:J)$, then $f = \pi(g)$ for some $g$ which satisfies $g h \in I$ for all $h \in J$. Therefore $f\pi(h) = \pi(g) \pi(h) = \pi(gh) \in \pi(I)$ for any $h \in J$, proving $f \in \pi(I):\pi(J)$. If $f \in \pi(I):\pi(J)$, then for any $h \in J$, $f \pi(h) \in \pi(I)$, \ie, there exists $g \in I$ such that $f \pi(h) = \pi(g)$. Denote the homogenization of $f$ with respect to $z_1, \dots, z_n$ by $\tilde{f}$. We claim that $\tilde{f} \in I:J$. Indeed for any $h \in J$, $ \pi( \tilde{f} h) =\pi(\tilde{f})\pi(h) =  f \pi(h) = \pi(g)$ for some  $g \in I$. Homogenizing both sides, we get $\tilde{f} h = g \in I$, and we conclude that $\pi(I):\pi(J) \subseteq \pi(I:J)$
\end{proof}

\begin{corollary}\label{cor:dehomogenizing}
If $\mathcal{A}$ is a camera arrangement with pairwise distinct foci, then $\pi(M_\mathcal{A}) = \pi(H_\mathcal{A}^n) = \pi(F_\mathcal{A}) = \pi(\sqrt{Y_\mathcal{A} }) $.
\end{corollary}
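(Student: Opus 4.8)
The plan is to combine Corollary~\ref{cor:dehomogenizingMA}, the saturation results of Section~\ref{sec:determinantal ideals} (Theorems~\ref{thm:HZsaturated}, \ref{thm:FAsaturated}, \ref{thm:radicalsaturated}(c)), and the dehomogenization identity of Lemma~\ref{lem:dehomogenizecolon}. The key point is that dehomogenizing with respect to all the $z_i$ makes each irrelevant ideal $\mathfrak{m}_i = \langle x_i, y_i, z_i\rangle$ into the unit ideal, since $\pi(z_i) = 1$. Hence $\pi(\mathfrak{m}) = \pi\bigl(\bigcap_i \mathfrak{m}_i\bigr) = \C[x_i,y_i]$: indeed $\mathfrak{m}$ is generated by the multilinear monomials $\prod_i w_i$ with $w_i \in \{x_i,y_i,z_i\}$, and one of these, $\prod_i z_i$, dehomogenizes to $1$.

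The main steps are as follows. First I would record that $\pi(\mathfrak{m}) = (1)$, the whole ring $\C[x_i,y_i]$, by the observation above. Next, for the $H^n_\mathcal{A}$ claim, apply Lemma~\ref{lem:dehomogenizecolon} to Theorem~\ref{thm:HZsaturated}, which gives $H^n_\mathcal{A} : \mathfrak{m} = M_\mathcal{A}$: this yields $\pi(M_\mathcal{A}) = \pi(H^n_\mathcal{A} : \mathfrak{m}) = \pi(H^n_\mathcal{A}) : \pi(\mathfrak{m}) = \pi(H^n_\mathcal{A}) : (1) = \pi(H^n_\mathcal{A})$. The same chain, using Theorem~\ref{thm:FAsaturated} ($F_\mathcal{A} : \mathfrak{m} = M_\mathcal{A}$), gives $\pi(M_\mathcal{A}) = \pi(F_\mathcal{A})$. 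For the Ma ideal, Theorem~\ref{thm:radicalsaturated}(c) only gives $\sqrt{Y_\mathcal{A}} : \mathfrak{m} = M_\mathcal{A}$ (this is why the statement carries the radical for $Y_\mathcal{A}$ but not for $H^n_\mathcal{A}$ or $F_\mathcal{A}$), so applying Lemma~\ref{lem:dehomogenizecolon} there gives $\pi(M_\mathcal{A}) = \pi(\sqrt{Y_\mathcal{A}}) : \pi(\mathfrak{m}) = \pi(\sqrt{Y_\mathcal{A}})$. Stringing these equalities together yields $\pi(M_\mathcal{A}) = \pi(H^n_\mathcal{A}) = \pi(F_\mathcal{A}) = \pi(\sqrt{Y_\mathcal{A}})$, as claimed. (The hypothesis $A_1 = [I\,\,0]$ needed for $Y_\mathcal{A}$ in Theorem~\ref{thm:radicalsaturated}(c) can be arranged by a $\P^3$-coordinate change, which does not affect the equalities, or is simply carried along as an implicit assumption inherited from that theorem.)

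The one genuinely substantive point to get right is the claim $\pi(\mathfrak{m}) = (1)$. Since $\pi$ is the ring homomorphism $z_i \mapsto 1$ (not merely a vector-space map) and an image of an ideal under a surjective ring homomorphism is an ideal, $\pi(\mathfrak{m})$ is an ideal of $\C[x_i,y_i]$ containing $\pi(\prod_i z_i) = 1$, hence is the unit ideal. I would also double-check the directionality in Lemma~\ref{lem:dehomogenizecolon}: it is stated as an equality $\pi(I:J) = \pi(I):\pi(J)$ for arbitrary homogeneous $I,J$, so it applies verbatim with $J = \mathfrak{m}$ (resp. $J = \mathfrak{m}$, $I = \sqrt{Y_\mathcal{A}}$). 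Everything else is a purely formal concatenation of previously proved statements, so there is no real obstacle; the only thing to be careful about is not to overclaim — in particular the final equality involves $\pi(\sqrt{Y_\mathcal{A}})$ rather than $\pi(Y_\mathcal{A})$, matching the fact that we only know the radical version of the saturation statement for the Ma ideal.
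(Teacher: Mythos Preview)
Your proposal is correct and follows essentially the same approach as the paper: apply Lemma~\ref{lem:dehomogenizecolon} to each of Theorems~\ref{thm:HZsaturated}, \ref{thm:FAsaturated}, and \ref{thm:radicalsaturated}(c), using that $\pi(\mathfrak{m}) = (1)$ since $\prod_i z_i \in \mathfrak{m}$ dehomogenizes to $1$. Your treatment of the $A_1 = [I\,\,0]$ hypothesis as an implicit assumption carried over from Theorem~\ref{thm:radicalsaturated}(c) also matches the paper, which notes this explicitly right after the corollary.
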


\begin{proof}
Lemma~\ref{lem:dehomogenizecolon} implies that $\pi(I : \mathfrak{m}) = \pi(I) : (1) = \pi(I)$ for any ideal $I$. Dehomogenizing Theorems~\ref{thm:HZsaturated},~\ref{thm:FAsaturated}, and~\ref{thm:radicalsaturated}, each equality follows.
\end{proof}

Observe that the last equality in Corollary~\ref{cor:dehomogenizing} requires $A_1 = [I\, \, 0]$. Geometrically, Corollary~\ref{cor:dehomogenizing} shows that while the homogenous ideals $H_\mathcal{A}^n, F_\mathcal{A}, Y_\mathcal{A}$, and $M_\mathcal{A}$ do not coincide, they are the same away from the origin in each image plane. In particular, this is the case on the affine patch $\{\mathbf{p} \in \P^{2n} : z_1 = \dots = z_n =  1\}$ corresponding to finite image data.

 Using Theorem~\ref{thm:bifocals saturated} we see that, when $\mathcal{A}$ is noncoplanar, the dehomogenized bifocals alone suffice to generate the affine multiview ideal $\pi(M_\mathcal{A})$.  

\begin{corollary}\label{cor:dehomogenizingnoncoplanar}
Suppose $\mathcal{A}$ is a noncoplanar camera arrangement with pairwise distinct foci.
Then 
\begin{align*}
\pi(M_\mathcal{A}) = \pi(H_\mathcal{A}^2).
\end{align*}
\end{corollary}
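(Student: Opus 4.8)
The plan is to deduce Corollary~\ref{cor:dehomogenizingnoncoplanar} directly from Theorem~\ref{thm:bifocals saturated} together with Lemma~\ref{lem:dehomogenizecolon}, exactly as Corollary~\ref{cor:dehomogenizing} is deduced from Theorems~\ref{thm:HZsaturated} and~\ref{thm:FAsaturated}. The only subtlety is that Theorem~\ref{thm:bifocals saturated} is stated for $n \ge 4$, so I first need to dispose of the cases $n = 2, 3$ separately.

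First I would handle the small cases. For $n = 3$, noncoplanarity is automatic (three points always lie on a plane, so in fact "noncoplanar" is vacuous when $n \le 3$ — wait, this needs care). Actually, for $n \le 3$ the foci always lie on a common plane, so a noncoplanar arrangement necessarily has $n \ge 4$; hence the hypothesis already forces $n \ge 4$ and there are no small cases to treat. I would state this observation in one sentence. (If the paper's convention is instead that "noncoplanar" for $n \le 3$ is vacuously false, the corollary is vacuous there and there is still nothing to prove.)

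The main step: since $\mathcal{A}$ is noncoplanar with pairwise distinct foci and $n \ge 4$, Theorem~\ref{thm:bifocals saturated} gives $M_\mathcal{A} = H_\mathcal{A}^2 : \mathfrak{m}$. Now apply the dehomogenization map $\pi$ to both sides. By Lemma~\ref{lem:dehomogenizecolon}, $\pi(H_\mathcal{A}^2 : \mathfrak{m}) = \pi(H_\mathcal{A}^2) : \pi(\mathfrak{m})$. The irrelevant ideal $\mathfrak{m}$ is generated by the multilinear monomials $\prod_{i=1}^n w_i$ with $w_i \in \{x_i, y_i, z_i\}$; in particular $\prod_{i=1}^n z_i \in \mathfrak{m}$, and $\pi(\prod z_i) = 1$, so $\pi(\mathfrak{m}) = (1)$ is the unit ideal. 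Hence $\pi(H_\mathcal{A}^2) : \pi(\mathfrak{m}) = \pi(H_\mathcal{A}^2) : (1) = \pi(H_\mathcal{A}^2)$, and therefore $\pi(M_\mathcal{A}) = \pi(H_\mathcal{A}^2)$, as claimed. This is essentially the same one-line argument used in the proof of Corollary~\ref{cor:dehomogenizing}.

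I do not anticipate a genuine obstacle here — the work has all been done in Section~\ref{sec:bifocal ideal} and in Lemma~\ref{lem:dehomogenizecolon}. The only place to be slightly careful is the bookkeeping that $\pi(\mathfrak{m}) = (1)$ (it suffices to exhibit one generator of $\mathfrak{m}$ mapping to a unit, which $\prod z_i$ does) and the remark that the hypotheses of the corollary already imply $n \ge 4$, so Theorem~\ref{thm:bifocals saturated} applies without an extra case analysis.
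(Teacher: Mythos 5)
Your proposal is correct and is essentially the paper's own proof: dehomogenize the identity $M_\mathcal{A} = H_\mathcal{A}^2 : \mathfrak{m}$ from Theorem~\ref{thm:bifocals saturated} and use Lemma~\ref{lem:dehomogenizecolon} together with $\pi(\mathfrak{m}) = (1)$. Your side remark that noncoplanarity already forces $n \ge 4$ is a harmless (and accurate) piece of bookkeeping that the paper leaves implicit.
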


\begin{proof}
Dehomogenizing the result of Theorem~\ref{thm:bifocals saturated}, we get $\pi(M_\mathcal{A}) = \pi(H_\mathcal{A}^2 : \mathfrak{m}) =  \pi(H_\mathcal{A}^2) : \pi(\mathfrak{m})  = \pi (H_\mathcal{A}^2).$
\end{proof}

Corollary \ref{cor:dehomogenizingnoncoplanar} shows that $\pi(M_\mathcal{A})$ is generated by quadratics whenever $\mathcal{A}$ satisfies the noncoplanarity assumption. 
This observation was used in \cite{AAT12} to create a semidefinite programming relaxation of the triangulation problem which is can be seen as minimizing Euclidean distance from an observed noisy data point to the affine multiview variety. It was shown that when the noise is small, the semidefinite relaxation solves triangulation. Of course, Corollary~\ref{cor:dehomogenizing} needs the foci of the cameras to be noncoplanar and indeed, the experiments in \cite{AAT12} show that the quality of the semidefinite programming solution deteriorates as the foci become coplanar and then collinear. 

Geometrically, we can understand how the quality of the relaxation deteriorates because the bifocal ideal cuts out more than the multiview variety for coplanar arrangements. In the coplanar case, the bifocal ideal cuts out the image of the plane that contains the camera centers. These points are not the images of true 3D points. It is therefore possible that the nearest point problem yields a spurious solution on this extra component. Similarly, in the collinear case, the bifocal ideal cuts out a strictly larger variety than just the multiview variety. In this case, the dimension of the vanishing set of the bifocal ideal is one larger than the multiview variety. 

\section{Summary}
\label{sec:summary}
The multiview variety is a foundational geometric object in multiview geometry and understanding its vanishing ideal  $M_\mathcal{A}$ precisely is important for any algebraic algorithm that solves problems on this variety.
There have been many partial results about the algebraic structure of the multiview variety. The aim of our paper is to put them all into a unified algebraic setting and give a complete  description of  $M_\mathcal{A}$.

Our main result is that when the foci of the cameras are pairwise distinct, $M_\mathcal{A}$ is generated by the bifocal and trifocal polynomials of $\mathcal{A}$ 
(Theorem~\ref{thm:BA+TA=JA}). 
The proof requires an understanding of the behavior of coordinate changes 
on $k$-focal ideals (Lemma~\ref{lem:kfocal ideals map}), and translational cameras 
(Lemma~\ref{lem:Q inside T for translational cameras}).
The main result holds for Euclidean cameras as well (Corollary~\ref{cor:Euclidean cameras}). We also give an example to illustrate that the 
assumption of distinct foci cannot be relaxed for this result to hold (Example~\ref{ex:coincident foci2}).

Next we study three sets of polynomials that have been proposed to cut out the multiview variety, by Heyden-{\AA}str{\"o}m, Faugeras and Ma et. al. respectively. We show that the ideals generated by these polynomials 
are all properly contained in $M_\mathcal{A}$. We establish the exact algebraic relationships between the above ideals and $M_\mathcal{A}$ 
(Theorems \ref{thm:radicalsaturated}, \ref{thm:HZsaturated} and \ref{thm:FAsaturated}). 

We then prove that if the camera foci are assumed to be noncoplanar, then in fact $M_\mathcal{A}$ is the saturation of the bifocal ideal by the irrelevant ideal (Theorem~\ref{thm:bifocals saturated}). In this situation the $n$-focal ideal is a subset of the bifocal ideal.

Finally we prove that the dehomogenization of the ideals by Heyden-{\AA}str{\"o}m, Faugeras and Ma et. al. all agree with the dehomogenization of $M_\mathcal{A}$ 
(Corollary~\ref{cor:dehomogenizing}). Similarly, under noncoplanarity of foci, the bifocal ideal also has the same dehomogenization (Corollary~\ref{cor:dehomogenizingnoncoplanar}).
This means that all of these ideals cut out the space of finite images. 

\section{Acknowledgements}
We wish to thank the referees of this paper for their careful reading and suggestions. In particular, their comments helped fill a gap in the proof of the main 
theorem of Section~5.

Andrew Pryhuber and Rekha R. Thomas acknowledge support from the U.S. National Science Foundation through the grant DMS-1719538. 

\bibliography{references}
\bibliographystyle{siam}


\section*{Appendix A: Multigraded Projective Nullstellensatz}
 \label{sec:appendixA}
 
In this appendix, we state and prove the projective Nullstellensatz in our multigraded setting, which we use to prove Theorem~\ref{thm:radicalsaturated} in Section~\ref{sec:determinantal ideals}. Let $I \subseteq\C[p_1, \dots, p_n]$ be homogeneous with respect to the $\Z^n$-grading $\deg w_i = \mathbf{e}_i$. To be clear about projective versus affine varieties, we define
$\mathbf{V}_\mathbb{P}(I ) := \mathbf{V}(I) = \{\mathbf{p} \in (\P^2)^n : f(\mathbf{p}) = 0 \text{ for all } f \in I \}$, and 
for a set  $S\subseteq (\P^2)^n$, we define 
\begin{align*}
\mathbf{I}_\mathbb{P}(S) &= \{f \in \mathfrak{m} : f(\mathbf{p}) = 0 \text{ for all } \mathbf{p} \in S\}.
\end{align*}
We say that $\mathbf{V}_\mathbb{P}(I) $ is the projective vanishing set of $I$ in $(\P^2)^n$ and $\mathbf{I}_\mathbb{P}(S)$ is the largest homogeneous ideal vanishing on $S$ contained in $\mathfrak{m}$. While we force $\mathbf{I}_\mathbb{P}(S) \subseteq \mathfrak{m}$, it also makes sense to consider the largest homogeneous ideal vanishing on $S$ without intersecting with $\mathfrak{m}$. As before we denote this ideal by $\mathbf{I}(S)$, and notice that $\mathbf{I}_\mathbb{P}(S) = \mathbf{I}(S) \cap \mathfrak{m}$. In the usual grading on $\C[p_1, \ldots, p_n]$, a vanishing ideal $\mathbf{I}(S)$ is homogeneous in the usual sense which means that it is contained in the usual irrelevant ideal $\langle x_1,y_1,z_1, \ldots, x_n,y_n,z_n \rangle$. Under the multi-grading, $\mathbf{I}_\mathbb{P}(S)$ is required to be in the corresponding irrelevant ideal $\mathfrak{m}$.
We will use the following variant of the Nullstellensatz.
\begin{lemma}
\label{lem:nullstellensatz}
For any homogeneous ideal $I \subseteq \C[p_1, \dots, p_n]$ such that $I \subseteq \mathfrak{m}$, $\mathbf{I}_\mathbb{P}(\mathbf{V}_\mathbb{P}(I)) = \sqrt{I}. $
\end{lemma}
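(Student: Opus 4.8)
The plan is to reduce the multigraded projective Nullstellensatz to the classical affine Nullstellensatz applied to a suitable affine cone, exactly as in the single-grading case but keeping track of the product structure $(\P^2)^n$ and the correspondingly larger irrelevant ideal $\mathfrak{m}=\bigcap_i\mathfrak{m}_i$. First I would fix the ``affine cone'' picture: a point $\mathbf{p}=(\mathbf{p}_1,\dots,\mathbf{p}_n)\in(\P^2)^n$ lifts to the set of $(\mathbf{v}_1,\dots,\mathbf{v}_n)\in(\C^3)^n$ with each $\mathbf{v}_i$ a nonzero scalar multiple of a representative of $\mathbf{p}_i$; since $I\subseteq\mathfrak{m}$ is $\Z^n$-homogeneous, a homogeneous $f\in I$ vanishes at $\mathbf{p}\in(\P^2)^n$ iff it vanishes on the whole cone over $\mathbf{p}$, and $f$ vanishes on all of $(\C^3)^n$ that have some zero coordinate block iff $f\in\mathfrak{m}$ (this last point is where $I\subseteq\mathfrak{m}$ and the shape of $\mathfrak{m}$ enter). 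So the affine variety $\mathbf{V}_{\mathrm{aff}}(I)\subseteq(\C^3)^n$ is the union of the cone over $\mathbf{V}_\mathbb{P}(I)$ together with the locus where at least one $\mathbf{v}_i=0$.

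The containment $\sqrt{I}\subseteq\mathbf{I}_\mathbb{P}(\mathbf{V}_\mathbb{P}(I))$ is the easy direction: if $g^k\in I$ then $g^k$, hence $g$, vanishes on $\mathbf{V}_\mathbb{P}(I)$, and $\sqrt{I}\subseteq\mathfrak{m}$ because $\mathfrak{m}$ is radical (it is an intersection of the primes $\mathfrak{m}_i$), so $g\in\mathbf{I}_\mathbb{P}(\mathbf{V}_\mathbb{P}(I))$. For the reverse containment, take $g\in\mathbf{I}_\mathbb{P}(\mathbf{V}_\mathbb{P}(I))$, so $g\in\mathfrak{m}$ and $g$ vanishes on $\mathbf{V}_\mathbb{P}(I)$. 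I claim $g$ vanishes on the affine variety $\mathbf{V}_{\mathrm{aff}}(I)\subseteq(\C^3)^n$: on the cone over $\mathbf{V}_\mathbb{P}(I)$ this is because $g$ is homogeneous and vanishes projectively (including at the origin of each block), and on the locus where some $\mathbf{v}_i=0$ it is because $g\in\mathfrak{m}_i$. By the classical (affine) Nullstellensatz over $\C$, $g\in\sqrt{\mathbf{I}_{\mathrm{aff}}(\mathbf{V}_{\mathrm{aff}}(I))}=\sqrt{I}$, using that $I$ is its own affine vanishing ideal's radical — more precisely, $g\in\sqrt{I}$ follows because the affine Nullstellensatz gives $g^k\in I$ for some $k$. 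This closes the loop.

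The main obstacle is making the first paragraph airtight: namely, verifying that the affine vanishing locus of a homogeneous ideal $I\subseteq\mathfrak{m}$ really is exactly (cone over $\mathbf{V}_\mathbb{P}(I)$) $\cup$ $\{\exists i:\mathbf{v}_i=0\}$, and that a homogeneous polynomial vanishing on this whole locus together with the condition of lying in $\mathfrak{m}$ is equivalent to vanishing projectively — in other words, that no information is lost by passing to the cone. One has to be a little careful that ``$g\in\mathfrak{m}$'' is genuinely the right translation of ``$g$ vanishes on all blocks-with-a-zero'': since $\mathfrak{m}=\bigcap_i\langle x_i,y_i,z_i\rangle$, a polynomial lies in $\mathfrak{m}$ iff it lies in every $\mathfrak{m}_i$ iff it vanishes whenever \emph{any} single block of variables is set to zero, which is precisely the needed statement. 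Once this dictionary is set up, the rest is a routine invocation of Hilbert's Nullstellensatz; I would cite \cite{CLO07} for the affine version and for the single-graded projective version that this generalizes.
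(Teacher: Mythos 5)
Your proposal is correct and follows essentially the same route as the paper's proof: pass to the affine vanishing set of $I$ (the multi-cone), use the $\Z^n$-homogeneity of $I$ to identify its points with nonzero blocks with the cone over $\mathbf{V}_\mathbb{P}(I)$, use membership in $\mathfrak{m}=\bigcap_i\langle x_i,y_i,z_i\rangle$ to control the loci where some block vanishes, and then invoke the classical affine Nullstellensatz. The only cosmetic difference is organizational (you argue two containments of the final identity, while the paper first proves $\mathbf{I}_\mathbb{A}(C_V)=\mathbf{I}_\mathbb{P}(V)$ and then concludes), and your appeal to ``$g$ homogeneous'' in the reverse direction should instead be phrased via vanishing at all homogeneous-coordinate representatives (equivalently, componentwise), exactly as the paper does.
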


\begin{proof}
Define the affine operations 
\begin{align*}
\mathbf{V}_\mathbb{A}(I) &= \{  \mathbf{p} \in (\A^3)^n : f(\mathbf{p}) = 0 \text{ for all } f \in I\} \\
\mathbf{I}_\mathbb{A}(S) &= \{f \in \C[p_1, \dots, p_n]: f(\mathbf{p}) = 0 \text{ for all } \mathbf{p} \in S\}
\end{align*}
where we treat $S$ as a subset of $(\mathbb{A}^3)^n$. We will use the affine version of the Nullstellensatz on the cone over $V:= \mathbf{V}_\mathbb{P}(I)$, \ie, the set $C_V = \mathbf{V}_\mathbb{A}(I) \subseteq (\A^3)^n$. We claim that 
\begin{align}
\label{eq:cone}
\mathbf{I}_\mathbb{A}(C_V) = \mathbf{I}_\mathbb{P}(V).
\end{align}

First suppose $f \in \mathbf{I}_\mathbb{A}(C_V)$. Given $\mathbf{p} = (\mathbf{p}_1, \dots, \mathbf{p}_n) \in V$, all homogeneous coordinates of $\mathbf{p}$, represented by scalings $(\la_1 \mathbf{p}_1, \dots , \la_n\mathbf{p}_n)$, lie in $C_V$, so $f$ vanishes for all homogeneous coordinates of $\mathbf{p}$. This means that the homogeneous components $f_{i_1, \dots, i_n}$ of $f$, consisting of all terms with multidegree $(i_1, \dots, i_n)$, vanish at $\mathbf{p}$, so 
$f \in \mathbf{I}(V)$, hence $\mathbf{I}_\mathbb{A}(C_V) \subseteq \mathbf{I}(V)$. By the Nullstellensatz in $(\A^3)^n$, $\mathbf{I}_\mathbb{A}(C_V) = \mathbf{I}_\mathbb{A}(\mathbf{V}_\mathbb{A}(I) ) = \sqrt{I}$, and by the assumption that $I \subseteq \mathfrak{m}$, $\sqrt{I} \subseteq \sqrt{\mathfrak{m}} = \mathfrak{m}$. This shows that $ \mathbf{I}_\mathbb{A}(C_V) \subseteq  \mathbf{I}(V) \cap \mathfrak{m} =  \mathbf{I}_\mathbb{P}(V)$.

Conversely, suppose $f \in \mathbf{I}_{\mathbb{P}}(V)$. Since any point $\mathbf{p}$ of $C_V$ such that $\mathbf{p}_i \neq 0$ for all $i$ gives homogeneous coordinates for a point in $V$, it follows that $f$ vanishes on $C_V \minus \bigcup_{i=1}^n \A^3 \times \dots \times \{0\}_i \times \dots \times \A^3$. We need to show that $f$ vanishes on each of the sets $\A^3 \times \dots \times \{0\}_i \times \dots \times \A^3$. Since $f \subseteq \mathfrak{m}$, it has strictly positive multidegree, and every monomial in $f$ contains at least one coordinate from each copy of $\A^3$. Setting all 3 coordinates to zero in any $\A^3$ forces $f$ to be zero, so we conclude that $f \in \mathbf{I}_\mathbb{A}(C_V)$. Finally, from (\ref{eq:cone}), we conclude
\[
\sqrt{I} = \mathbf{I}_\mathbb{A}(\mathbf{V}_\mathbb{A}(I)) = \mathbf{I}_\mathbb{A}(C_V) = \mathbf{I}_\mathbb{P}(V) =   \mathbf{I}_\mathbb{P}(\mathbf{V}_\mathbb{P}(I)).
\]
\end{proof}

\begin{corollary}
\label{cor:nullstellensatz}
For any homogeneous ideal $I \subseteq \C[p_1, \dots, p_n]$,  $\mathbf{I}_\mathbb{P}(\mathbf{V}_\mathbb{P}(I)) = \sqrt{I} \cap \mathfrak{m}$.
\end{corollary}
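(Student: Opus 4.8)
The plan is to bootstrap from Lemma~\ref{lem:nullstellensatz}, which already settles the case $I \subseteq \mathfrak{m}$, by passing from an arbitrary homogeneous $I$ to the homogeneous ideal $I \cap \mathfrak{m} \subseteq \mathfrak{m}$. First I would record the two elementary facts needed at the end: $\mathfrak{m} = \bigcap_{i=1}^n \mathfrak{m}_i$ is an intersection of the primes $\mathfrak{m}_i = \langle x_i, y_i, z_i\rangle$, hence radical, so $\sqrt{\mathfrak{m}} = \mathfrak{m}$; and for any ideals $\sqrt{I \cap \mathfrak{m}} = \sqrt{I} \cap \sqrt{\mathfrak{m}} = \sqrt{I} \cap \mathfrak{m}$ (if $f^k \in I$ and $f^\ell \in \mathfrak{m}$ then $f^{\max(k,\ell)} \in I \cap \mathfrak{m}$, and conversely).

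The one substantive point is the claim $\mathbf{V}_\mathbb{P}(I) = \mathbf{V}_\mathbb{P}(I \cap \mathfrak{m})$. The inclusion $\mathbf{V}_\mathbb{P}(I) \subseteq \mathbf{V}_\mathbb{P}(I \cap \mathfrak{m})$ is immediate from $I \cap \mathfrak{m} \subseteq I$. For the reverse, fix $\mathbf{p} = (\mathbf{p}_1,\dots,\mathbf{p}_n) \in \mathbf{V}_\mathbb{P}(I \cap \mathfrak{m}) \subseteq (\P^2)^n$ and a homogeneous $f \in I$. Since each $\mathbf{p}_i \neq 0$, for every $i$ there is a coordinate $w_i \in \{x_i,y_i,z_i\}$ with $w_i(\mathbf{p}_i) \neq 0$; then $u := \prod_{i=1}^n w_i$ is one of the multilinear generators of $\mathfrak{m}$ and $u(\mathbf{p}) \neq 0$. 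Because $f$ is homogeneous, $fu$ lies in $\mathfrak{m}$, and it lies in $I$, so $fu \in I \cap \mathfrak{m}$ and therefore $f(\mathbf{p})\,u(\mathbf{p}) = 0$. As $u(\mathbf{p}) \neq 0$ this forces $f(\mathbf{p}) = 0$, so $\mathbf{p} \in \mathbf{V}_\mathbb{P}(I)$.

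Finally I would apply Lemma~\ref{lem:nullstellensatz} to the homogeneous ideal $I \cap \mathfrak{m} \subseteq \mathfrak{m}$ and chain the equalities
\[
\mathbf{I}_\mathbb{P}(\mathbf{V}_\mathbb{P}(I)) = \mathbf{I}_\mathbb{P}(\mathbf{V}_\mathbb{P}(I \cap \mathfrak{m})) = \sqrt{I \cap \mathfrak{m}} = \sqrt{I} \cap \mathfrak{m}.
\]
There is no real obstacle here: all the work is in Lemma~\ref{lem:nullstellensatz}, and the only thing to be careful about is that intersecting with $\mathfrak{m}$ does not shrink the projective variety — which is exactly the middle paragraph, and relies on the fact that a point of $(\P^2)^n$ has a nonvanishing coordinate in each of its $n$ factors.
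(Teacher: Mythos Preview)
Your proof is correct and follows essentially the same approach as the paper: pass from $I$ to $I\cap\mathfrak{m}$, check that this does not change the projective vanishing locus, compute $\sqrt{I\cap\mathfrak{m}}=\sqrt{I}\cap\mathfrak{m}$, and apply Lemma~\ref{lem:nullstellensatz}. The only cosmetic difference is that the paper dispatches the equality $\mathbf{V}_\mathbb{P}(I\cap\mathfrak{m})=\mathbf{V}_\mathbb{P}(I)$ in one line via $\mathbf{V}_\mathbb{P}(I\cap\mathfrak{m})=\mathbf{V}_\mathbb{P}(I)\cup\mathbf{V}_\mathbb{P}(\mathfrak{m})=\mathbf{V}_\mathbb{P}(I)$, whereas you unpack that same fact by hand with the monomial $u=\prod w_i$.
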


\begin{proof}

Observe that 
\[\mathbf{V}_\mathbb{P}(I \cap \mathfrak{m}) = \mathbf{V}_\mathbb{P}(I) \cup \mathbf{V}_\mathbb{P}(\mathfrak{m}) =\mathbf{V}_\mathbb{P}(I)
\] 
and 
\[
\sqrt{I \cap \mathfrak{m}} = \sqrt{I} \cap \sqrt{\mathfrak{m}} = \sqrt{I} \cap \mathfrak{m} \subseteq \mathfrak{m}
\]
Therefore by Lemma~\ref{lem:nullstellensatz}, $\mathbf{I}_\mathbb{P}(\mathbf{V}_\mathbb{P}(I)) = \sqrt{I} \cap \mathfrak{m}$.
\end{proof}

\begin{corollary}
\label{cor:same intersection}
For any $\mathcal{A}$ with pairwise distinct foci,
\[
M_\mathcal{A} \cap \mathfrak{m} =\sqrt{H^n_\mathcal{A}} \cap \mathfrak{m} = \sqrt{F_\mathcal{A}} \cap \mathfrak{m} = \sqrt{Y_\mathcal{A}} \cap \mathfrak{m}.
\]
\end{corollary}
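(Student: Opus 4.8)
The plan is to deduce this corollary as a purely formal consequence of the multigraded projective Nullstellensatz (Corollary~\ref{cor:nullstellensatz}) together with the three ``same projective variety'' lemmas of Section~\ref{sec:determinantal ideals}. The point is that the operator $\mathbf{I}_\mathbb{P}(\cdot)$ depends only on the point set fed into it, so any two multihomogeneous ideals with the same projective vanishing set automatically have the same intersection $\sqrt{\cdot}\cap\mathfrak{m}$, and Corollary~\ref{cor:nullstellensatz} identifies that intersection with $\mathbf{I}_\mathbb{P}$ of the common vanishing set.

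Concretely, I would first record that $M_\mathcal{A}$ is prime, hence radical (it is the vanishing ideal of the irreducible variety $\mathbf{V}(M_\mathcal{A})$, as noted in Section~\ref{sec:multiview ideal}), so that $\sqrt{M_\mathcal{A}}\cap\mathfrak{m}=M_\mathcal{A}\cap\mathfrak{m}$; applying Corollary~\ref{cor:nullstellensatz} to $I=M_\mathcal{A}$ then gives $\mathbf{I}_\mathbb{P}(\mathbf{V}_\mathbb{P}(M_\mathcal{A}))=M_\mathcal{A}\cap\mathfrak{m}$. Next I would apply Corollary~\ref{cor:nullstellensatz} separately to $I=H^n_\mathcal{A}$, $I=F_\mathcal{A}$, and $I=Y_\mathcal{A}$, obtaining $\mathbf{I}_\mathbb{P}(\mathbf{V}_\mathbb{P}(H^n_\mathcal{A}))=\sqrt{H^n_\mathcal{A}}\cap\mathfrak{m}$, $\mathbf{I}_\mathbb{P}(\mathbf{V}_\mathbb{P}(F_\mathcal{A}))=\sqrt{F_\mathcal{A}}\cap\mathfrak{m}$, and $\mathbf{I}_\mathbb{P}(\mathbf{V}_\mathbb{P}(Y_\mathcal{A}))=\sqrt{Y_\mathcal{A}}\cap\mathfrak{m}$. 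Finally I would invoke Lemmas~\ref{lem:vanishingHA}, \ref{lem:vanishingFA}, and~\ref{lem:vanishingYA}, which state precisely that $\mathbf{V}_\mathbb{P}(H^n_\mathcal{A})=\mathbf{V}_\mathbb{P}(F_\mathcal{A})=\mathbf{V}_\mathbb{P}(Y_\mathcal{A})=\mathbf{V}_\mathbb{P}(M_\mathcal{A})$ (the equality involving $Y_\mathcal{A}$ under the normalization $A_1=[I\,\,0]$ that is already part of the definition of the Ma ideal). Substituting these equalities of vanishing sets into the four displayed identities above forces $\sqrt{H^n_\mathcal{A}}\cap\mathfrak{m}=\sqrt{F_\mathcal{A}}\cap\mathfrak{m}=\sqrt{Y_\mathcal{A}}\cap\mathfrak{m}=M_\mathcal{A}\cap\mathfrak{m}$, which is the claim.

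I do not expect any real obstacle here: all of the Nullstellensatz content has been isolated in Lemma~\ref{lem:nullstellensatz}/Corollary~\ref{cor:nullstellensatz}, and all of the geometry in the vanishing lemmas, so this statement is bookkeeping. The only subtlety to spell out is the reduction of $\sqrt{M_\mathcal{A}}\cap\mathfrak{m}$ to $M_\mathcal{A}\cap\mathfrak{m}$, \ie\ that $M_\mathcal{A}$ is radical. This corollary is then exactly the input for Theorem~\ref{thm:radicalsaturated}: coloning each intersection identity by $\mathfrak{m}$, using the elementary fact $(J\cap\mathfrak{m}):\mathfrak{m}=J:\mathfrak{m}$ and $M_\mathcal{A}:\mathfrak{m}=M_\mathcal{A}$ (since $M_\mathcal{A}$ is prime and contains no monomial), converts the statement above into $\sqrt{H^n_\mathcal{A}}:\mathfrak{m}=\sqrt{F_\mathcal{A}}:\mathfrak{m}=\sqrt{Y_\mathcal{A}}:\mathfrak{m}=M_\mathcal{A}$.
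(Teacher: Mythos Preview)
Your proposal is correct and matches the paper's own proof almost verbatim: the paper simply cites the equality of projective vanishing sets established in Section~\ref{sec:determinantal ideals}, notes that $M_\mathcal{A}$ is radical, and invokes Corollary~\ref{cor:nullstellensatz}. You have spelled out the same argument in more detail, including the explicit application of Corollary~\ref{cor:nullstellensatz} to each of the four ideals and the reduction $\sqrt{M_\mathcal{A}}\cap\mathfrak{m}=M_\mathcal{A}\cap\mathfrak{m}$.
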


\begin{proof}
We have already shown in Section~\ref{sec:determinantal ideals} that $\mathbf{V}_\mathbb{P}(H^n_\mathcal{A}) = \mathbf{V}_\mathbb{P}(F_\mathcal{A}) = \mathbf{V}_\mathbb{P}(Y_\mathcal{A}) = \mathbf{V}_\P(M_\mathcal{A}) $. Since $M_\mathcal{A}$ is radical, the result follows by Corollary~\ref{cor:nullstellensatz}.
\end{proof}

We can now prove Theorem~\ref{thm:radicalsaturated}, restated here, from the main body of the paper.

\begin{theorem}
For any $\mathcal{A}$ with pairwise distinct foci,
\begin{enumerate}[label=\alph*)]
\item $\sqrt{H^n_\mathcal{A}} : \mathfrak{m} = M_\mathcal{A}$
\item $\sqrt{F_\mathcal{A}} : \mathfrak{m} = M_\mathcal{A}$
\item $\sqrt{Y_\mathcal{A}} : \mathfrak{m} = M_\mathcal{A}$ when $A_1 = [I \; |\; 0]$
\end{enumerate}
\end{theorem}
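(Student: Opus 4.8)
The plan is to reduce all three statements to a short chain of elementary colon/intersection identities, with the substantive work already carried out in Corollaries~\ref{cor:nullstellensatz} and~\ref{cor:same intersection}. First I would record the bookkeeping fact that for any homogeneous ideal $I \subseteq \C[p_1,\dots,p_n]$ one has $I : \mathfrak{m} = (I \cap \mathfrak{m}) : \mathfrak{m}$: the inclusion $\supseteq$ is immediate since $I \cap \mathfrak{m} \subseteq I$, and for $\subseteq$, if $f\mathfrak{m} \subseteq I$ then automatically $f\mathfrak{m} \subseteq \mathfrak{m}$ because $\mathfrak{m}$ is an ideal, hence $f\mathfrak{m} \subseteq I \cap \mathfrak{m}$. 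Second, I would recall from the proof of Theorem~\ref{thm:HZsaturated} that $M_\mathcal{A} : \mathfrak{m} = M_\mathcal{A}$, which holds because $M_\mathcal{A}$ is prime and contains no monomial: if $fu \in M_\mathcal{A}$ for a monomial generator $u$ of $\mathfrak{m}$, then $u \notin M_\mathcal{A}$ forces $f \in M_\mathcal{A}$.

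With these in hand, part (a) follows by Corollary~\ref{cor:same intersection}, which gives $\sqrt{H^n_\mathcal{A}} \cap \mathfrak{m} = M_\mathcal{A} \cap \mathfrak{m}$, so that
\[
\sqrt{H^n_\mathcal{A}} : \mathfrak{m} = (\sqrt{H^n_\mathcal{A}} \cap \mathfrak{m}) : \mathfrak{m} = (M_\mathcal{A} \cap \mathfrak{m}) : \mathfrak{m} = M_\mathcal{A} : \mathfrak{m} = M_\mathcal{A}.
\]
Parts (b) and (c) are verbatim the same argument, using $\sqrt{F_\mathcal{A}} \cap \mathfrak{m} = M_\mathcal{A} \cap \mathfrak{m}$ and $\sqrt{Y_\mathcal{A}} \cap \mathfrak{m} = M_\mathcal{A} \cap \mathfrak{m}$ respectively. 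For (c), the hypothesis $A_1 = [I \; 0]$ enters only through Lemma~\ref{lem:vanishingYA} (and hence through the last equality in Corollary~\ref{cor:same intersection}), since without it $\mathbf{V}_\mathbb{P}(Y_\mathcal{A})$ need not equal the multiview variety.

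The genuine content has been pushed into the results used: the multigraded projective Nullstellensatz (Lemma~\ref{lem:nullstellensatz} and Corollary~\ref{cor:nullstellensatz}) and the coincidence of projective vanishing sets $\mathbf{V}_\mathbb{P}(H^n_\mathcal{A}) = \mathbf{V}_\mathbb{P}(F_\mathcal{A}) = \mathbf{V}_\mathbb{P}(Y_\mathcal{A}) = \mathbf{V}_\mathbb{P}(M_\mathcal{A})$ established in Section~\ref{sec:determinantal ideals}. Consequently I expect no serious obstacle in the argument itself; the one point requiring care is conceptual rather than computational, namely that one must use the \emph{multigraded} Nullstellensatz, where the irrelevant ideal is $\mathfrak{m} = \bigcap_{i=1}^n \langle x_i,y_i,z_i\rangle$ rather than $\langle x_1,\dots,z_n\rangle$, and that it is precisely the primeness of $M_\mathcal{A}$ together with its containing no monomials that guarantees the extra components of $\sqrt{H^n_\mathcal{A}}$, $\sqrt{F_\mathcal{A}}$, $\sqrt{Y_\mathcal{A}}$ all disappear upon saturating by $\mathfrak{m}$.
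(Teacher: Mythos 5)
Your proposal is correct and follows essentially the same route as the paper: both rest on Corollary~\ref{cor:same intersection} (i.e.\ the multigraded Nullstellensatz plus the coincidence of the projective vanishing sets) together with the fact that $M_\mathcal{A} : \mathfrak{m} = M_\mathcal{A}$ from Theorem~\ref{thm:HZsaturated}. Your explicit identity $I : \mathfrak{m} = (I \cap \mathfrak{m}) : \mathfrak{m}$ merely spells out the step the paper summarizes as ``taking colon ideal with $\mathfrak{m}$,'' so there is no substantive difference.
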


\begin{proof}
Taking colon ideal with $\mathfrak{m}$, the desired result follows from Corollary~\ref{cor:same intersection} and the fact that $M_\mathcal{A} : \mathfrak{m} = M_\mathcal{A}$, which was proven in Theorem~\ref{thm:HZsaturated}.

\end{proof}

\section*{Appendix B: Technical Proofs}
 \label{sec:appendixB}
 
 In this appendix, we elaborate on the technical details used to prove Theorem~\ref{thm:FAsaturated}. Recall that the nontrivial statement there was that bifocals and trifocals can be multiplied by any generator of $\mathfrak{m}$ to fall into $F_\mathcal{A}$. This requires understanding the $4 \times 4$ minors of $\mathcal{A}^F(p)$ for which we once again invoke the Cauchy-Binet formula and the observation that $\mathcal{A}^F(p) = P(p) \mathcal{A}$ from (\ref{eq:AFdecomposition}). 

First we characterize certain $4\times 4$ minors of $P(p)$. Let $p_{ij}$ denote the $j$th coordinate of $p_i$, \ie, $p_{i1} = x_i$, $p_{i2} = y_{i}$, and $p_{i3} = z_i$. Having the subscript (resp. superscript) $p_{ij}$ on $P(p)$ indicates eliminating from $P(p)$ the unique row (resp. column) of $[p_i]_\times$ that does not contain $p_{ij}$. On the other hand, having the subscript $p_{ij}$ on the matrices $\mathcal{A}$ and $\mathcal{A}(p)$ will stand for eliminating the unique row of the matrix containing $p_{ij}$.

We will only need to consider the $4\times 4$ minors of $P(p)$ when $n= 2$ and $n=3$. Let $R_i, C_i \subseteq \{p_{i1}, p_{i2}, p_{i3}\}$ denote collections of coordinates, and write $R = \bigcup_{i=1}^n R_i$, $C = \bigcup_{i=1}^n C_i$. 
When $n=2$, a $4 \times 4$ minor of $P(p)$ is $\det (P(p)_R^C)$ for some $R$, $C$ of size $|R| = |C| = 2$, and 
when $n=3$,  $|R| = |C| = 5$. Observe that if $|R_i| \neq |C_i|$ for any $i$, then the submatrix $P(p)_R^C$ has at least two linearly dependent rows or columns, yielding a zero minor. When $|R_i| = |C_i| $ for all $i$, $P(p)_R^C$ is block diagonal, so $\det (P(p)_R^C) = \prod_{i=1}^n \det  ( ([p_i]_\times)_{R_i}^{C_i}) $. 

\begin{lemma}
\label{lem:2Pminors}
Let $n=2$. The nonzero $4\times 4$ minors of $P(p)$ are determined by collections of coordinates $R, C$ with $|R_1| = |C_1| = |R_2| = |C_2| = 1$. For $R = \{p_{1j}, p_{2k} \}$ and $C = \{p_{1l}, p_{2m}\}$, the $4\times 4$ minor $\det (P(p)_R^C )$ is the monomial
\begin{align*}
\det (P(p)_{R}^C) = (-1)^{j+k+l+m} p_{1j}p_{2k}p_{1l}p_{2m}.
\end{align*}
\end{lemma}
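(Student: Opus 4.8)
The plan is to exploit the block-diagonal structure $P(p) = \diag([p_1]_\times,[p_2]_\times)$. A $4\times 4$ minor of the $6\times 6$ matrix $P(p)$ is obtained by deleting two rows and two columns; write $R = R_1\cup R_2$ and $C = C_1\cup C_2$ for the deleted rows and columns, where $R_i,C_i$ refer to the $i$th diagonal block. Because the rows of block $1$ have support only in the columns of block $1$ (and likewise for block $2$), the surviving submatrix $P(p)_R^C$ is itself block diagonal, with blocks of sizes $(3-|R_1|)\times(3-|C_1|)$ and $(3-|R_2|)\times(3-|C_2|)$. First I would observe that $\det P(p)_R^C = \det B_1\cdot\det B_2$, where $B_i$ is the $i$th diagonal block of $P(p)_R^C$.

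Next I would rule out every case except $|R_1| = |C_1| = |R_2| = |C_2| = 1$. If $|R_i|\neq|C_i|$ for some $i$, then one diagonal block is non-square, so (using $|R_1|+|R_2| = |C_1|+|C_2| = 2$) the block with more surviving rows than columns contributes rows supported on too few coordinates, hence linearly dependent, and the minor vanishes. If instead $|R_i| = |C_i|$ for both $i$ but $|R_1| = |C_1| = 0$ (or symmetrically $|R_2| = |C_2| = 0$), then $B_1 = [p_1]_\times$ is a full $3\times 3$ skew-symmetric matrix, hence singular, so $\det P(p)_R^C = \det([p_1]_\times)\cdot\det B_2 = 0$. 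The only remaining possibility is that exactly one row and one column is deleted from each block, i.e.\ $R = \{p_{1j},p_{2k}\}$ and $C = \{p_{1l},p_{2m}\}$, which proves the first assertion of the lemma. Here I would also unwind the subscript/superscript convention, checking that the row (resp.\ column) of $[p_i]_\times$ not containing the coordinate $p_{ij}$ is precisely row (resp.\ column) $j$, so that $B_1$ is $[p_1]_\times$ with row $j$ and column $l$ removed, and $B_2$ is $[p_2]_\times$ with row $k$ and column $m$ removed.

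It then remains to evaluate the $2\times 2$ minors of $[p_i]_\times$. The cleanest route is the identity $\adj([p_i]_\times) = p_i p_i^\top$ (a standard fact about the cross-product matrix, or a two-line direct check), together with the general relation between a minor and the adjugate: deleting row $j$ and column $l$ of a $3\times 3$ matrix $M$ gives the minor $(-1)^{j+l}(\adj M)_{l,j}$. Applying this to $M = [p_1]_\times$ gives $\det B_1 = (-1)^{j+l}p_{1j}p_{1l}$, and likewise $\det B_2 = (-1)^{k+m}p_{2k}p_{2m}$. Multiplying and collecting signs, $\det P(p)_R^C = (-1)^{j+k+l+m}p_{1j}p_{1l}p_{2k}p_{2m}$, which is the claimed formula. (If one prefers not to invoke the adjugate identity, the nine choices $(j,l)\in\{1,2,3\}^2$ can simply be verified by inspection of $[p_i]_\times$.)

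I expect no genuine obstacle here; the only point requiring care is sign bookkeeping—tracking the cofactor sign in the adjugate relation and confirming that the notation $P(p)_R^C$ deletes row/column $j$ of $[p_i]_\times$ rather than some permuted index. Once that is pinned down, both parts of the statement follow by the short computation above.
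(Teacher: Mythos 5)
Your proof is correct and follows essentially the same route as the paper: exploit the block-diagonal structure of $P(p)_R^C$, kill the cases $|R_i|\neq|C_i|$ (dependent rows) and $|R_i|=|C_i|=0$ (a full singular $3\times 3$ skew-symmetric block), and multiply the two $2\times 2$ block minors. The only difference is that you justify the value $(-1)^{j+l}p_{1j}p_{1l}$ of the $2\times 2$ minors via $\adj([p_i]_\times)=p_ip_i^\top$, whereas the paper simply states it; this is a harmless (indeed slightly more complete) addition.
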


\begin{proof}
As noted above, if $|R_i| \neq |C_i|$ for either $i$, then $\det (P(p)_R^C) = 0$, whereas if $|R_i| = |C_i| = 2$ for either $i$, then $P(p)_R^C$ has a rank 2 block on its diagonal, hence $\det(P(p)_R^C) = 0$, proving the first statement. For $R = \{p_{1j}, p_{2k} \}$ and $C = \{p_{1l}, p_{2m}\}$, the $4\times 4$ minor $\det P(p)_R^C$ is
\begin{align*}
\det P(p)_R^C &= \det ( ([p_1]_\times)_{R_1}^{C_1}  \det (  ([p_2]_\times)_{R_2}^{C_2} ) \\
			&= ((-1)^{j+l} p_{1j}p_{1l} ) ((-1)^{k+m} p_{2k}p_{2m}) \\
			&= (-1)^{j+k+l+m} p_{1j}p_{2k}p_{1l}p_{2m}.
\end{align*}
			
\end{proof}

\begin{lemma}
\label{lem:3Pminors}
Let $n=3$.  Suppose $|R_3| = |C_3| = 1$, and $|R_1|  = |C_1| = |R_2|= |C_2| = 2$. For $R_3 = \{p_{3j}\}, C_3 = \{p_{3k}\}$, the $4\times 4$ minor $\det (P(p)_R^C )$ is the monomial
\[
 \begin{cases}  (-1)^{j+k + l+m}  p_{3j}p_{3k} p_{1l} p_{2m}   & \text{ if } R_1 \neq C_1, R_2 \neq C_2 \\ 0 & \text{ otherwise.} \end{cases}
\]
where $p_{1l}$ is the coordinate common to $R_1$ and $C_1$ and $p_{2m}$ is the coordinate common to $R_2$ and $C_2$. 
\end{lemma}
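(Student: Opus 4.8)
The plan is to exploit the block-diagonal structure of $P(p) = \diag([p_1]_\times, [p_2]_\times, [p_3]_\times)$, exactly as in the proof of Lemma~\ref{lem:2Pminors}. First I would record the general principle: choosing $4$ rows and $4$ columns of $P(p)$ produces a submatrix that is block diagonal with one block per camera, the block coming from camera $i$ being the submatrix of $[p_i]_\times$ selected by $R_i$ and $C_i$. Since the three camera blocks occur consecutively in $P(p)$, the selected rows and columns are already grouped block-by-block, so no permutation sign arises and $\det(P(p)_R^C) = \prod_{i=1}^3 \det\big(([p_i]_\times)_{R_i}^{C_i}\big)$, which vanishes unless $|R_i| = |C_i|$ for each $i$ (an unbalanced block has a zero row or column). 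Under the hypotheses $|R_3| = |C_3| = 1$ and $|R_1| = |C_1| = |R_2| = |C_2| = 2$, camera $3$ contributes a $2 \times 2$ block while cameras $1$ and $2$ each contribute a single scalar entry of $[p_i]_\times$.

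For the camera-$3$ factor: deleting from $[p_3]_\times$ the row not containing $p_{3j}$ (namely row $j$) and the column not containing $p_{3k}$ (namely column $k$) gives exactly the type of $2\times 2$ minor handled in Lemma~\ref{lem:2Pminors}; the same computation — or the adjugate identity $\adj([v]_\times) = vv^\top$ combined with the cofactor–adjugate relation — yields $\det\big(([p_3]_\times)_{p_{3j}}^{p_{3k}}\big) = (-1)^{j+k} p_{3j} p_{3k}$.

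For cameras $1$ and $2$: the two coordinates of $R_i$ delete the two rows of $[p_i]_\times$ not containing them, leaving the single row indexed by the coordinate $p_{i c_i}$ absent from $R_i$; likewise the surviving column is indexed by the coordinate absent from $C_i$. Hence the block is the single entry $([p_i]_\times)_{c_i, c_i'}$. If $R_i = C_i$, then $c_i = c_i'$ and this entry lies on the diagonal of the skew-symmetric matrix $[p_i]_\times$, so it is $0$; this is the ``otherwise'' branch. If $R_i \ne C_i$, then $c_i \ne c_i'$, the coordinate common to $R_i$ and $C_i$ has the remaining index $l$ (resp. $m$), and the entry is $\pm p_{il}$ with the sign read directly off the displayed form of $[p_i]_\times$. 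Multiplying the camera-$1$, camera-$2$, and camera-$3$ contributions and collecting signs gives the stated monomial.

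The only real work is the sign bookkeeping — carefully maintaining the dictionary between ``which coordinates appear in $R_i$ and $C_i$'' and ``which rows and columns of $[p_i]_\times$ survive'', and matching the resulting product of signs with $(-1)^{j+k+l+m}$. Everything else reduces to the $n=2$ case already established in Lemma~\ref{lem:2Pminors}.
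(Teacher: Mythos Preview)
Your proposal is correct and follows essentially the same approach as the paper: both arguments use the block-diagonal factorization $\det(P(p)_R^C) = \prod_i \det\big(([p_i]_\times)_{R_i}^{C_i}\big)$, then identify the camera-$3$ factor as the $2\times 2$ minor $(-1)^{j+k}p_{3j}p_{3k}$ from Lemma~\ref{lem:2Pminors} and the camera-$1$, camera-$2$ factors as single off-diagonal entries of the skew-symmetric matrices $[p_1]_\times$, $[p_2]_\times$. Your write-up is somewhat more explicit about why no permutation sign enters and honestly flags the sign bookkeeping as the residual work, but the substance is identical to the paper's proof.
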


\begin{proof}
When $R_i= C_i$ as sets for $i=1$ or $i=2$, then $([p_i]_\times)_{R_i}^{C_i} = 0$, hence $\det P(p)_R^C =  \prod_{i=1}^n \det ( ([p_i]_\times)_{R_i}^{C_i}) = 0$. On the other hand, when $R_1 \neq C_1$, $\det ( ([p_1]_\times)_{R_1}^{C_1} ) = (-1)^l p_{1l}$ where $p_{1l} = R_1\cap C_1$. Similarly $\det ( ([p_2]_\times)_{R_2}^{C_2} ) = (-1)^m p_{2m}$ where $p_{2m} = R_2\cap C_2$ when $R_2 \neq C_2$. 
\end{proof}

We now show that bifocals and trifocals can both be multiplied by any generator of $\mathfrak{m}$ to fall into $F_\mathcal{A}$.


 \begin{lemma}
\begin{enumerate}[label=\alph*)]
\item For $n=2$ cameras, and any monomial $p_{1j}p_{2k}$, there exists a $4 \times 4$ minor $f$ of $\mathcal{A}^F(p)$ such that $f = (-1)^{j+k} p_{1j}p_{2k} \det (\mathcal{A}(p) )$.
\item Let $n=3$ and $i_1,i_2,i_3$ be pairwise distinct. Then for any trifocal $ \det (\mathcal{A}(p)_{\{p_{i_1j_1}p_{i_2j_2} \}}) $ and any coordinate $p_{i_3k}$, there exists 
a $4 \times 4$ minor $f$ of $\mathcal{A}^F(p)$ such that 
$f = (-1)^k p_{i_3k} \det ( \mathcal{A}(p)_{\{p_{i_1j_1}p_{i_2j_2} \}})$.
\end{enumerate}
\end{lemma}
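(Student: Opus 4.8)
## Proof Strategy

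The plan is to exploit the factorization $\mathcal{A}^F(p) = P(p)\mathcal{A}$ from \eqref{eq:AFdecomposition} together with the Cauchy–Binet formula (Lemma~\ref{lem:Cauchy Binet}) and the explicit computations of $4\times 4$ minors of $P(p)$ from Lemmas~\ref{lem:2Pminors} and~\ref{lem:3Pminors}. The key observation is that a $4\times 4$ minor of $\mathcal{A}^F(p)$ obtained by selecting row set $\tau$ is $\det\big((P(p)\mathcal{A})_{[\tau,:]}\big) = \det\big(P(p)_{[\tau,:]}\,\mathcal{A}\big)$, and Cauchy–Binet expands this as $\sum_{\sigma}\det(P(p)_{[\tau,\sigma]})\det(\mathcal{A}_{[\sigma,:]})$ over all $4$-element column sets $\sigma$. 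So the idea is to choose $\tau$ so that the only nonzero contributions $\det(P(p)_{[\tau,\sigma]})$ are exactly the ones that reproduce $\det(\mathcal{A}(p))$ (resp. the trifocal) up to the desired monomial factor.

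For part (a), with $n=2$: I would choose $\tau$ to consist of the two rows of $[p_1]_\times$ containing $x_{1}$ wait — more precisely, the rows indexed so that the surviving $P(p)$-submatrix has $R_1 = \{p_{1j}\}$, $R_2 = \{p_{2k}\}$ in the notation of Lemma~\ref{lem:2Pminors}, i.e. I delete the row of $[p_1]_\times$ not containing $p_{1j}$ and the row of $[p_2]_\times$ not containing $p_{2k}$. By Lemma~\ref{lem:2Pminors}, the $4\times 4$ minor $\det(P(p)_{[\tau,\sigma]})$ is nonzero only for column sets $\sigma$ with $|\sigma\cap\{1,2,3\}|=|\sigma\cap\{4,5,6\}|=1$ — wait, that's for the $6$ columns of $P(p)$; each such surviving minor equals $(-1)^{j+k+l+m}p_{1j}p_{2k}p_{1l}p_{2m}$ when the columns chosen correspond to coordinates $p_{1l}, p_{2m}$ (one from each block). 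Matching this against the corresponding $4\times 4$ minor $\det(\mathcal{A}_{[\sigma,:]})$ of the stacked $6\times 4$ camera matrix, the Cauchy–Binet sum becomes $\sum_{l,m}(-1)^{j+k+l+m}p_{1j}p_{2k}p_{1l}p_{2m}\det(\text{rows }l,m\text{ of cameras})$, and I recognize $\sum_{l,m}(-1)^{l+m}p_{1l}p_{2m}\det(\dots)$ as precisely the Laplace/Cauchy–Binet expansion of $\det(\mathcal{A}(p))$, the $6\times 6$ determinant of the matrix in \eqref{eq:HZmatrix} with $n=2$ — where the two extra columns $p_1, p_2$ contribute the sign-alternating coefficients. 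So $f = (-1)^{j+k}p_{1j}p_{2k}\det(\mathcal{A}(p))$.

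For part (b), with $n=3$: the trifocal $\det(\mathcal{A}(p)_{\{p_{i_1j_1}p_{i_2j_2}\}})$ is the maximal minor of $\mathcal{A}_{\{i_1,i_2,i_3\}}(p)$ obtained by deleting the rows containing $p_{i_1j_1}$ and $p_{i_2j_2}$, so cameras $i_1,i_2$ each contribute two rows and camera $i_3$ contributes three. Here I would select $\tau$ so that the surviving $P(p)$-submatrix has $|R_{i_1}|=|R_{i_2}|=2$ and $|R_{i_3}|=1$ with $R_{i_3}=\{p_{i_3k}\}$, again deleting the row of $[p_{i_3}]_\times$ not containing $p_{i_3k}$, and (crucially) choosing the two surviving rows of $[p_{i_1}]_\times$ and $[p_{i_2}]_\times$ to be the ones that, after the Cauchy–Binet column-split, force $R_{i_1}\neq C_{i_1}$ and $R_{i_2}\neq C_{i_2}$. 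By Lemma~\ref{lem:3Pminors}, the nonzero $\det(P(p)_{[\tau,\sigma]})$ then equals $(-1)^{j_1+j_2+k+l+m}$ hmm — I need to be careful that the $R_{i_1}$ indices are tied to which two rows I kept, which forces $l \in \{i_1j_1\}$-complement type constraints; the upshot is that the common coordinate $p_{i_1 l}$ must be $p_{i_1 j_1}$ and $p_{i_2 m}$ must be $p_{i_2 j_2}$, so there is a unique nonzero term, and it reproduces the trifocal's Laplace expansion coefficient along the $p$-columns times $(-1)^k p_{i_3k}$. Collecting signs gives $f = (-1)^k p_{i_3k}\det(\mathcal{A}(p)_{\{p_{i_1j_1}p_{i_2j_2}\}})$.

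The main obstacle I anticipate is bookkeeping of the signs — matching the row-deletion conventions (subscripts/superscripts on $P(p)$, $\mathcal{A}$, $\mathcal{A}(p)$ introduced just before Lemma~\ref{lem:2Pminors}), the Cauchy–Binet signs, and the Laplace-expansion signs of the $p$-columns in \eqref{eq:HZmatrix} — so that the exponents collapse to exactly $(-1)^{j+k}$ and $(-1)^k$. I would handle this by fixing once and for all an ordering of rows and columns, computing one representative case fully (say $j=k=1$ in part (a), and $i_1<i_2<i_3$ with $j_1=j_2=1$ in part (b)), and then arguing that permuting the selected coordinates only permutes matching rows and columns in $P(p)$ and in $\mathcal{A}(p)$ simultaneously, so the net sign change is accounted for by the explicit $(-1)^{\cdot}$ factors in Lemmas~\ref{lem:2Pminors} and~\ref{lem:3Pminors}. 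The ``geometric'' content — that only the right columns survive — is immediate from those two lemmas; everything else is the sign arithmetic.
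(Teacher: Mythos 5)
Your overall route is the paper's: factor $\mathcal{A}^F(p)=P(p)\mathcal{A}$ as in (\ref{eq:AFdecomposition}), keep the rows of $P(p)$ prescribed by the given coordinates, expand via Cauchy--Binet (Lemma~\ref{lem:Cauchy Binet}) using Lemmas~\ref{lem:2Pminors} and~\ref{lem:3Pminors}, and identify the surviving sum with a Laplace expansion along the symbolic columns of $\mathcal{A}(p)$. Part (a) as you describe it is exactly the paper's computation.

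In part (b), however, there is a concrete misstep. With the row choice you intend (and the paper makes) --- $R_{i_1}=\{p_{i_11},p_{i_12},p_{i_13}\}\setminus\{p_{i_1j_1}\}$, $R_{i_2}=\{p_{i_21},p_{i_22},p_{i_23}\}\setminus\{p_{i_2j_2}\}$, $R_{i_3}=\{p_{i_3k}\}$, so that only \emph{one} row of each of $[p_{i_1}]_\times$ and $[p_{i_2}]_\times$ survives while two rows of $[p_{i_3}]_\times$ do --- Lemma~\ref{lem:3Pminors} does \emph{not} yield a unique nonzero term, and the common coordinate of $R_{i_1}$ and $C_{i_1}$ cannot be $p_{i_1j_1}$: since $p_{i_1j_1}\notin R_{i_1}$, the nonvanishing terms are exactly those with $C_{i_1}=\{p_{i_1j_1},p_{i_1l}\}$ for $l\neq j_1$ (common coordinate $p_{i_1l}$), $C_{i_2}=\{p_{i_2j_2},p_{i_2m}\}$ for $m\neq j_2$, and $C_{i_3}$ ranging over all three singletons. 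So up to $2\cdot2\cdot3$ terms survive, each equal to $\pm\, p_{i_3k}p_{i_3i}p_{i_1l}p_{i_2m}$ times a $4\times4$ minor of the stacked camera matrix; if a single term survived, $f$ would be a monomial times one such camera minor, which is not $(-1)^k p_{i_3k}$ times the trifocal (the trifocal is itself a signed sum of twelve monomial-times-minor terms). The missing step is the same recognition you used in (a), applied to all three symbolic columns: the signed sum over $i$ and over $l\neq j_1$, $m\neq j_2$ is precisely the Laplace expansion of $\det\bigl(\mathcal{A}(p)_{\{p_{i_1j_1}p_{i_2j_2}\}}\bigr)$ along its $p_{i_3}$ column and then its $p_{i_1}$, $p_{i_2}$ columns, whose nonzero entries are exactly $p_{i_1l}$, $l\neq j_1$, and $p_{i_2m}$, $m\neq j_2$, because the rows containing $p_{i_1j_1}$ and $p_{i_2j_2}$ were deleted. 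With that correction the sign bookkeeping goes through as you outline and as in the paper.
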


 \begin{proof}
(a) Fix some $p_{1j}p_{2k}$. Since $n=2$, $P(p)\mathcal{A}$ is a $6 \times 4$ matrix and we need to 
delete two rows to get a $4 \times 4$ minor. Using Lemma~\ref{lem:2Pminors} and Cauchy-Binet, the result follows from the computation below:
\begin{align*}
f   
&= \det \left( P(p)_{\{p_{1j}, p_{2k} \}} \mathcal{A} \right) \\
&= \sum_{|C| = 2} \det \left(P(p)_{\{p_{1j},p_{2k}\}}^C \right) \det \left(\mathcal{A}_C \right) \\
&= \sum_{|C_1| = |C_2| = 1} \det \left(P(p)_{\{p_{1j},p_{2k}\}}^C \right) \det \left(\mathcal{A}_C \right)  \\ 
&=  \sum_{1\le l,m\le 3}   \det \left(\left([p_1]_\times\right)_{\{p_{1j}\} }^{\{p_{1l}\} }\right)  \det \left(\left([p_2]_\times\right)_{\{p_{2k}\} }^{\{p_{2m}\}}\right)  
\times \\ & \qquad \qquad\quad 
\det \left(\mathcal{A}_{\{ p_{1l}, p_{2m} \} }\right) \\
&=  \sum_{1\le l,m\le 3}  (-1)^{j+ k + l+m} p_{1j}p_{2k}p_{1l} p_{2m} 
 \det \left(\mathcal{A}_{\{ p_{1l}, p_{2m} \} }\right) \\
&=  (-1)^{j+k} p_{1j}p_{2k} \sum_{1\le l,m\le 3}
(-1)^{l+m}p_{1l} p_{2m} \det \left(\mathcal{A}_{\{ p_{1l}, p_{2m} \} }\right)\\
 &=  (-1)^{j+k} p_{1j}p_{2k} \det \left(\mathcal{A}(p)\right).
\end{align*}
where the last equality follows from expanding the determinant of $\mathcal{A}(p)$ along the last two columns.

(b) Without loss of generality, let $i_1 = 1$, $i_2 = 2$, $i_3 =3$ and let $p_{3k}$ be arbitrary. For simplicity, suppose $j_1=j_2=1$. Therefore, we consider the trifocal $\det(\mathcal{A}(p)_{\{p_{11},p_{21}\}})$. Using Lemma~\ref{lem:3Pminors} and Cauchy-Binet, we expand $f = \det ( P(p)_R \mathcal{A} )$ where $R_1 = \{p_{12}, p_{13}\}$, $R_2 = \{p_{22}, p_{23}\}$, $R_3 = \{p_{3k}\}$ as follows:
\begin{align*}
f &= \det \left( P(p)_R \mathcal{A} \right) \\
 &= \sum_{C \,:\, |C_1| = |C_2| = 2, |C_3| = 1} \det \left(P(p)_{\{R_1,R_2,R_3\}}^C\right) \det( \mathcal{A}_C ) \\
 &= \sum_{|C_3| = 1} 
 \det \left( \left([p_3]_\times\right)_{R_3}^{C_3}\right)  \times \\
 & \qquad \qquad  \sum_{\substack{|C_1| = |C_2| = 2}}  
\Bigg ( \det \left(  \left([p_1]_\times\right)_{R_1}^{C_1}\right)  \times \\
& \qquad \qquad\qquad \qquad\qquad  \det \left(  \left([p_2]_\times\right)_{R_2}^{C_2} \right)  
 \det \left(\mathcal{A}_C 
 \right) \Bigg) \\
 &= \sum_{i = 1}^3 (-1)^{i+k} p_{3k} p_{3i} \times \\
 &  
\qquad  \sum_{\substack{ |C_1| = |C_2| = 2 \\ C_1 \neq R_1  \\ C_2 \neq R_2 }}  
 \Bigg (
 \det \left( \left([p_1]_\times\right)_{R_1}^{C_1} \right) \times \\
 &\qquad\qquad \qquad\qquad \det \left( \left([p_2]_\times\right)_{R_2}^{C_2} \right)  
 \det \left( \mathcal{A}_{\{C_1, C_2, p_{3i}\}}\right)
 \Bigg) \\
\end{align*}
\begin{align*}
\quad &= (-1)^k p_{3k}  \sum_{i = 1}^3 (-1)^{i} p_{3i} \times\\
&\qquad \qquad\ \sum_{2 \le l,m \le 3 }    
\Bigg (\det\left(  \left([p_1]_\times\right)_{\{p_{12}, p_{13}\}}^{\{p_{11}, p_{1l}\} } \right) \times \\
& \qquad \qquad \qquad\qquad\quad
\det\left( ([p_2]_\times)_{\{p_{22}, p_{23}\}}^{\{p_{21}, p_{2m}\}}\right)  \times \\
& \qquad \qquad \qquad\qquad\quad
\det \left( \mathcal{A}_{\{p_{11}, p_{1l} , p_{21}, p_{2m}, p_{3i}\}}\right )  \Bigg)   \\
&= (-1)^k p_{3k}  \sum_{i = 1}^3 (-1)^{i} p_{3i}\times\\
&\qquad \qquad\quad\sum_{2 \le l,m \le 3 }(-1)^{l+m}  p_{1l}p_{2m} \det \left( \mathcal{A}_{\{p_{11}, p_{1l} , p_{21}, p_{2m}, p_{3i}\}} \right) \\ 
&= (-1)^k p_{3k} \det ( \mathcal{A}(p)_{\{p_{11},p_{21}\}})
\end{align*}
Observe that the final equality follows from expanding the determinant of $\mathcal{A}(p)_{\{p_{11},p_{21}\}}$ on the $p_3$ column.

For general $j_1, j_2$, performing the same computation with $R_1 = \{p_{11},p_{12},p_{13}\} \minus \{p_{1j_1}\}$,  $R_2 = \{p_{21}, p_{22},p_{23}\} \minus \{p_{1j_2}\}$ and $R_3 = \{p_{3k}\}$ yields $\det \left( P(p)_R \mathcal{A} \right) = (-1)^k p_{3k} \det \left( \mathcal{A}(p)_{\{p_{1j_1},p_{2j_2}\}}\right)$. 
\end{proof}
\end{document}